\newcommand{\eps}{\varepsilon}
\renewcommand{\phi}{\varphi}
\renewcommand{\bar}[1]{\overline{#1}}
\renewcommand{\leq}{\leqslant}
\renewcommand{\geq}{\geqslant}
\newcommand{\Prob}{\mathcal{P}}
\newcommand{\Sph}{\mathcal{S}}
\newcommand{\spt}{\mathrm{spt}}
\newcommand{\B}{\mathrm{B}}
\newcommand{\D}{\mathrm{D}}
\newcommand{\sca}[2]{\langle #1 | #2\rangle}
\DeclareMathOperator{\argmin}{argmin}
\newcommand{\Wass}{\operatorname{W}}
\newcommand{\nr}[1]{\left\Vert #1\right\Vert}
\renewcommand{\d}{\mathrm{d}}
\newcommand{\dd}{\mathrm{d}}
\newcommand{\LL}{\mathrm{L}}
\newcommand{\Class}{\mathcal{C}}
\newcommand{\Lip}{\mathrm{Lip}}
\DeclareMathOperator{\diam}{diam}
\newcommand{\proj}{\mathrm{proj}}
\newcommand{\Rsp}{\mathbb{R}}
\newcommand{\R}{\mathbb{R}}
\newcommand{\TC}{\mathcal{T}}
\newcommand{\X}{\mathcal{X}}
\newcommand{\Y}{\mathcal{Y}}
\newcommand{\Ix}{\mathfrak{I}_x}
\newcommand{\Grhok}{\mathcal{G}_K \circ \overrightarrow{\rho_K}}
\newcommand{\K}{\mathcal{K}}
\DeclareMathOperator{\cexp}{c-exp}
\DeclareMathOperator{\inter}{int}
\DeclareMathOperator{\Dom}{Dom}
\newcommand{\Dompx}{\Dom'(\nabla_x c)}
\newcommand{\Dompy}{\Dom'(\nabla_y c)}
\newtheorem{theorem}{Theorem}
\newtheorem{lemma}[theorem]{Lemma}
\newtheorem{remark}[theorem]{Remark}
\newtheorem{proposition}[theorem]{Proposition}
\newtheorem{corollary}[theorem]{Corollary}
\newtheorem{definition}[theorem]{Definition}
\title{Strong c-concavity and stability in optimal transport}
\author{Anatole Gallou\"{e}t}
\address{Universit\'e  Grenoble Alpes, CNRS, Grenoble INP, LJK, 38000
Grenoble, France}
\author{Quentin M\'erigot}
\address{Université Paris-Saclay, CNRS, Laboratoire de mathématiques d’Orsay, 91405, Orsay, France}
\author{Boris Thibert}
\address{Universit\'e  Grenoble Alpes, CNRS, Grenoble INP, LJK, 38000
Grenoble, France}
\date{\today}
\begin{document}
\maketitle

\begin{abstract}
  The stability of solutions to optimal transport problems under variation of the measures is fundamental from a mathematical viewpoint: it is closely related to the convergence of numerical approaches to solve optimal transport problems and justifies many of the applications of optimal transport. 
In this article, we introduce the notion of strong $c$-concavity, and we show that it plays an important role for proving stability results in optimal transport for general cost functions $c$. 
We then introduce a differential criterion for proving that a function is strongly $c$-concave, under an hypothesis on the cost introduced originally by Ma-Trudinger-Wang for establishing regularity of optimal transport maps. Finally, we provide two examples where this stability result can be applied, for cost functions taking value $+\infty$ on the sphere: the reflector problem and the Gaussian curvature measure prescription problem.
\end{abstract}

\tableofcontents
\section{Introduction}
The theory of optimal transport has had an important impact in applied
mathematics, with applications in inverse problems, in variational
modeling of evolution PDEs
\cite{villani2003topics,santambrogio2015optimal}, and in machine
learning \cite{peyre2019computational} to name but a few. Numerical
applications of this theory have been made possible thanks to the
tremendous progress of optimal transport solvers in the last decade
\cite{peyre2019computational, merigot2021optimal, benamou2021optimal}.

The stability of solutions to optimal transport problems under
variation of the data is fundamental from a mathematical viewpoint,
making optimal transport a ``well-posed'' problem in the terminology
of Hadamard. The question of \emph{quantitative stability} is also of
prime importance. The first and most obvious reason is that it is
strongly related to the convergence of many numerical approaches to
solve optimal transport problems --- both in statistical and in numerical
analysis contexts --- and explicitly or implicitly it justifies most of
the  applications of optimal transport.  Quantitative
stability is at the heart of several other applications, including the
understanding of geometric embeddings of spaces of probability
measures to Hilbert spaces used in statistics~\cite{delalande2021quantitative},
the convergence analysis of numerical methods for evolution equations
using optimal transport as a building block~\cite{bourne2022semi},
the estimation of transport maps in high dimension~\cite{hutter2021minimax}
or the construction of precise asymptotics for random matching problems~\cite{ambrosio2019optimal}.

The stability of optimal transport plans can be established in a very
general setting \cite{villani2008optimal}, under variations of the
source and target measures, and even under variations of the
cost. However, the question of \emph{quantitative} stability has only
been addressed rather recently, and most of the existing results deal
with the cost function $c(x,y) = \nr{x-y}^2$
\cite{gigli2011holder,berman2021convergence,
  delalande2021quantitative,li2021quantitative}, or with the squared
geodesic distance on a Riemannian manifold \cite{ambrosio2019optimal}.
The aim of this article is to establish stability results for more
general cost functions, namely those that satisfy the \emph{strong
Twist} and \emph{Ma-Trudinger-Wang} conditions on manifolds. We also
identify \emph{strong $c$-concavity} of the Kantorovitch potential as
a central notion to get stability results.

\subsection*{Optimal transport}
Let $M,N$ be two Polish spaces, let $\mu \in \Prob(M)$, $\nu \in
\Prob(N)$ be two probability measures on $M$ and $N$ and let
$c:M\times N\to \R \cup \{+\infty\}$ be a lower semi-continuous cost function which is bounded below. A
\emph{transport map} between $\mu$ and $\nu$ is a map $T:M\to N$ such that the image measure $T_\#\mu$ equals $\nu$. 
Monge's optimal transport problem between $\mu$ and $\nu$ for the
cost $c$ amounts to finding a map $T:M \to N$ that minimizes
\begin{equation}
\label{eq:MP}
\tag{MP}
\inf_{T_\# \mu = \nu} \int_M c(x, T(x))d\mu(x).
\end{equation}
Such a map, if it exists, is called an \emph{optimal transport map}
between $\mu$ and $\nu$.  Existence and uniqueness of such an optimal
transport map is obtained for instance when the transport cost is
quadratic, i.e.  $c(x,y) =\nr{x-y}^2$, when $M,N$ are compact subsets
of $\Rsp^d$, and when $\mu$ is absolutely continuous with respect to
the Lebesgue measure \cite{brenier1991polar}. Existence and uniqueness
also hold for more general cost functions satisfying a so-called
``twist'' hypothesis \cite{gangbo1995optimal}.

Kantorovich's relaxation consists in minimizing the same quantity, but
among \emph{transport plans} $\Gamma(\mu,\nu)$:
\begin{equation}
\label{eq:KP}
\tag{KP}
\min_{\gamma \in \Gamma(\mu,\nu)} \int_{M\times N} c(x,y) \dd\gamma(x,y).
\end{equation}
We recall that a
transport plan between $\mu$ and $\nu$ is a probability measure
$\gamma\in\Prob(M\times N)$ with marginals $\mu$ and $\nu$. 
Under mild assumptions (e.g. $c$ is lower-semicontinuous and bounded below), a minimizer
to \eqref{eq:KP} always exists -- but uniqueness may fail. A minimizer
to \eqref{eq:KP} is called an optimal transport plan.

\subsection*{Existing stability results}
The problem of stability of optimal transport maps can be expressed as
a continuity property of the map $(\mu,\nu) \mapsto T_{\mu\to\nu}$,
where $T_{\mu\to\nu}$ is the optimal transport map between a source
probability measure $\mu$ and a target measure $\nu$. In order to have
a common space in which to consider the optimal transport map
$T_{\mu\to\nu}$, we will mainly consider the problem of the stability of
the map $T_{\nu} := T_{\mu\to\nu}$ for a fixed $\mu$. As first noted by Li and Nochetto
\cite{li2021quantitative}, the arguments implying quantitative
stability of $\nu \mapsto T_{\mu\to\nu}$ sometimes also imply general
stability results, where both the source and target measures can
change.

To the best of our knowledge, the first quantitative stability result
in optimal transport is of ``local'' nature, in the sense that it only
holds near a configuration $(\mu,\nu)$, and is established under
strong assumptions on the data. It is due to Ambrosio and reported in
an article of Gigli \cite{gigli2011holder}.  It can be phrased as
follows.

\begin{theorem}[Ambrosio-Gigli]
  Assume that $M$ and $N$ are compact subsets of $\Rsp^d$,  that
  $\mu \in\Prob(M)$ is absolutely continuous, and that for some
  $\nu_0\in \Prob(N)$ the optimal transport map $T_{\mu \to \nu_0}$
  for the quadratic cost $c(x,y) = \nr{x-y}^2$ is Lipschitz. Then
\begin{equation} \label{eq:AmbrosioGigli}
  \forall \nu_1\in\Prob(N), ~~\nr{T_{\mu\to \nu_0} - T_{\mu\to\nu_1}}^2_{L^2(\mu)} \leq \diam(M) \Lip(T_{\mu\to \nu_0}) W_1(\nu_0, \nu_1).
\end{equation}
\end{theorem}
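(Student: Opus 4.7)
The plan is to leverage the equivalence---via Legendre duality of Brenier potentials---between $T_{\mu\to\nu_0}$ being $L$-Lipschitz and the dual Brenier potential being $1/L$-strongly convex, together with Kantorovich $W_1$-duality and a gluing argument.

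Write $T_0 := T_{\mu\to\nu_0}$ and $T_1 := T_{\mu\to\nu_1}$ as gradients of convex potentials $u_0, u_1$ (Brenier), and set $L := \Lip(T_0)$. Then $u_0$ is $L$-smooth, which by Legendre duality is equivalent to $u_0^*$ being $1/L$-strongly convex. Since $\nabla u_0^* \circ T_0 = \id$, applying strong convexity of $u_0^*$ to the pair $(y = T_0(x),\, y' = T_1(x))$ yields the pointwise inequality
\begin{equation*}
  \tfrac{1}{2L}\,\|T_0(x) - T_1(x)\|^2 \;\leq\; u_0^*(T_1(x)) - u_0^*(T_0(x)) - x \cdot (T_1(x) - T_0(x)).
\end{equation*}
Integrating against $\mu$ and using $(T_i)_\# \mu = \nu_i$ gives
\begin{equation*}
  \int \|T_0 - T_1\|^2 \, d\mu \;\leq\; 2L\Bigl[\int u_0^* \, d(\nu_1 - \nu_0) \;+\; \int x \cdot (T_0 - T_1)\,d\mu\Bigr].
\end{equation*}

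To handle the first term, observe that $\nabla u_0^* = T_0^{-1}$ takes values in $M$, so (after a harmless translation) $u_0^*$ is $\diam(M)$-Lipschitz; the $W_1$-Kantorovich duality then gives $\int u_0^* \, d(\nu_1 - \nu_0) \leq \diam(M)\, W_1(\nu_0, \nu_1)$. To handle the second term, use Brenier's variational characterization that $T_0$ maximizes $\pi \mapsto \int x \cdot y\,d\pi$ over $\pi \in \Gamma(\mu, \nu_0)$: lifting an optimal $W_1$-plan from $\nu_1$ to $\nu_0$ by gluing it with $(\id, T_1)_\# \mu$ produces a competitor $\pi \in \Gamma(\mu, \nu_0)$ with $\int x \cdot y\,d\pi = \int x \cdot T_1(x)\,d\mu + \int x\cdot(y_0 - y_1)\,d\alpha$, where the last term has absolute value at most $\diam(M)\,W_1(\nu_0, \nu_1)$. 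Brenier optimality of $T_0$ then gives $\int x \cdot (T_0 - T_1)\,d\mu \geq -\diam(M)\, W_1(\nu_0, \nu_1)$, and a symmetric argument swapping $T_0$ and $T_1$ (using Brenier optimality of $T_1$ with a competitor in $\Gamma(\mu, \nu_1)$ built from $(\id, T_0)_\# \mu$) gives the reverse bound. Hence $|\int x \cdot (T_0 - T_1)\,d\mu| \leq \diam(M)\, W_1(\nu_0, \nu_1)$.

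Combining the two estimates produces the desired inequality up to an absolute constant. The hardest step is the estimate on $\int x \cdot (T_0 - T_1)\,d\mu$: a direct Cauchy--Schwarz bound only yields a quadratic inequality in the unknown $\|T_0 - T_1\|_{L^2(\mu)}$, whereas the gluing-with-$W_1$ construction above exploits Brenier optimality as a \emph{quantitative} stability tool, producing the sharp linear-in-$W_1$ bound. This interplay between strong convexity of the dual potential and Kantorovich $W_1$-duality is precisely the mechanism that the rest of the paper generalizes to non-quadratic Ma--Trudinger--Wang costs via the notion of strong $c$-concavity.
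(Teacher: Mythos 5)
Your proof is correct in its main thrust and shares the key mechanism with the paper's Theorem~\ref{th:stability-cconc} — namely, that Lipschitz regularity of $T_0$ is equivalent to strong convexity of the conjugate potential $u_0^*$, and that this strong convexity, paired with Kantorovich--Rubinstein duality, yields a bound on $\|T_0 - T_1\|^2_{L^2(\mu)}$ that is linear in $W_1(\nu_0, \nu_1)$. Your derivation of the pointwise strong convexity inequality and its integration are sound.

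However, your route diverges from the paper's (and from the cited Delalande--Merigot proof) in a way that makes it heavier and loses a constant. The paper's argument — which is what Theorem~\ref{th:stability-cconc} generalizes — introduces \emph{both} potentials $\psi_0$ and $\psi_1$ and pairs the difference $\psi_0 - \psi_1$ against $\nu_0 - \nu_1$. With this symmetric pairing, the inner-product cross-terms cancel identically (the contribution of $\psi_1$ cancels the $\int x\cdot(T_0-T_1)\,d\mu$ term by one application of Fenchel--Young applied to $u_1$), and the whole estimate collapses to a single Kantorovich--Rubinstein bound with the stated constant. Your proof instead uses only $u_0^*$, which leaves the dangling term $\int x\cdot(T_0-T_1)\,d\mu$; you then bound it via gluing an optimal $W_1$-plan with $(\id,T_1)_\#\mu$ and invoking Brenier optimality of $T_0$ (and symmetrically of $T_1$). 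That argument is correct, but it is more machinery than needed: the observation $\int \langle x, T_0(x)\rangle\,d\mu \leq \int u_1\,d\mu + \int u_1^*\,d\nu_0$ (Fenchel--Young) together with the Fenchel equality $\int \langle x, T_1(x)\rangle\,d\mu = \int u_1\,d\mu + \int u_1^*\,d\nu_1$ bounds $\int x\cdot(T_0-T_1)\,d\mu \leq \int u_1^*\,d(\nu_0-\nu_1)$ directly, with no gluing and a better constant. Your acknowledgment that you get the estimate only "up to an absolute constant" is honest — bounding the two terms independently does lose a factor, whereas the paper's symmetrized formulation preserves the stated constant $\diam(M)\,\Lip(T_0)$. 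Since the gluing-with-$W_1$ idea does appear elsewhere in the paper (in the proof of Proposition~\ref{prop:stabbothmeasureW1}, following Li--Nochetto), your instinct to use it is not wrong in spirit, but it is not the mechanism underlying this particular theorem.
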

In the above statement, $\Lip(T)$ is the Lipschitz constant of
 the map $T$ and $\Wass_1(\nu_0,\nu_1)$ is the Wasserstein distance between
$\nu_0$ and $\nu_1$ with respect to the Euclidean distance on $N$.

By Brenier theorem \cite{brenier1991polar}, we know that $T_\nu =
\nabla \phi_\nu$, where $\phi_\nu$ is convex.  A convex analysis
result shows that the Lipschitz regularity of $T_\nu$ is equivalent to
the strong convexity of the convex conjugate $\psi_\nu =
\phi_\nu^*$. Using these remarks, the proof of the stability estimate
\eqref{eq:AmbrosioGigli} can then be obtained in a few lines, see e.g.
\cite[Theorem 2.2]{delalande2021quantitative}. Li and
Nochetto~\cite{li2021quantitative} prove under the same hypothesis
that 
if $\gamma \in\Prob(M\times N)$ is the
transport plan between $\mu$ and $\nu$ induced by the optimal map $T_{\mu\to\nu}$, and 
$\tilde{\gamma}$ is \emph{any} optimal transport plan between $\tilde{\mu}$
and $\tilde{\nu}$, i.e. any solution to \eqref{eq:KP} then
\[ \Wass_2(\gamma, \tilde{\gamma})^2 \leq C (\Wass_2(\mu, \tilde{\mu}) + \Wass_2(\nu, \tilde{\nu})) ,\]
where $C$ is a constant that depends on $\Lip(T_{\mu\to\nu})$, the
diameters of $M$ and $N$. The Wasserstein distance $\Wass_2$ in the
left-hand side is with respect to a product metric on $M\times N$.

We mention that the ``Euclidean'' stability result
\eqref{eq:AmbrosioGigli} can be extended to optimal transport problems
on a compact Riemannian manifold with the squared geodesic distance~\cite{ambrosio2019optimal}.  We also mention the
more ``global'' stability results of \cite{berman2021convergence,
  delalande2021quantitative}, which do not make regularity assumptions on $T_{\mu\to\nu}$, but come with worse
continuity estimates. For instance, the main theorem of
\cite{delalande2021quantitative} shows that if $\mu \in\Prob(\Rsp^d)$
is a probability density on a compact convex subset of $\Rsp^d$, which
is bounded from above and below by a positive constant, then for any
compact subset $Y\subseteq\Rsp^d$, the map $\nu \mapsto T_{\mu\to\nu}$
is $\frac{1}{6}$-Hölder from $(\Prob(Y),\Wass_1)$ to
$\LL^2(\mu,\Rsp^d)$, to be compared to the $\frac12$ exponent in
\eqref{eq:AmbrosioGigli}.




\subsection*{Strong $c$-concavity of the potential}

A key ingredient in the stability results for the quadratic cost~\cite{gigli2011holder, ambrosio2019optimal} is the strong convexity of the Kantorovich potentials $\psi$ associated to the optimal transport maps. In order to get stability results for general cost functions $c$, we introduce below the notion of \emph{strong $c$-concavity}. 

We denote by $\d_N : N  \times N \to \R_+$ the distance on $N$. The $p$-Wasserstein distance on $\Prob(N)$ between two probability measures is defined with respect to the  distance by
\[ W_p^p(\nu_0,\nu_1) = \inf_{\gamma \in \Gamma(\nu_0, \nu_1) } \int_{N \times N} d_N(y,z)^p d\gamma(y,z),\]

\begin{definition}[Transport map induced by a potential]
Let $T : M \to N$ be a measurable map, and $\psi: N \to \R$. We say that $T$ is induced by $\psi$, or that $\psi$ is a potential associated to $T$ if
\[ \forall x \in M, \quad T(x) \in \argmin_{y \in N} c(x,y) - \psi(y) \]
\end{definition}
Thanks to Kantorovich duality~\cite{villani2003topics}, we know that if a transport map $T$ from $\mu$ to $\nu$ is induced by a potential $\psi$ then T is a solution to the Monge problem~\eqref{eq:MP}.
Such a potential $\psi$ can be constructed by solving the dual problem
\begin{equation}\tag{DP}
\label{eq:KD}
\sup_{\psi : N \to \R} \int_M \psi^c \dd \mu + \int_N \psi \dd \nu
\end{equation}
where $\psi^c : M \to \R$ is the c-transform of $\psi$, defined by
\[ \psi^c (x) = \inf_{y \in N} c(x,y) - \psi(y) \]
so that $\psi^c(x) + \psi(y) \leq c(x,y)$. The dual problem~\eqref{eq:KD} has a maximizer, for instance, if the cost $c$ is continuous on the compact $M \times N$, but existence also holds with weaker hypothesis on $c$, see~\cite{villani2008optimal} for instance.
When such a maximizer exists, and still by Kantorovich theory, we can assume that a map $T$ solution of~\eqref{eq:MP} is induced by a $c$-concave potential $\psi$. We recall the notion of $c$-concavity, and we refer to~\cite{villani2008optimal}.
\begin{definition}[$c$-concavity and $c$-conjugate]
We say that $\psi : N \to \R \cup \{ - \infty \}$ is c-concave if for any $y \in N$ there exists $x \in M$ such that
\[\forall z \in N, \quad c(x,z) - \psi(z) \geq c(x,y) - \psi(y) \]
An equivalent definition is that there exists a function $\phi : M \to \R \cup \{ \pm \infty \}$ such that for any $y \in N$
\[ \psi(y) = \inf_{x \in M} c(x,y) - \phi(x). \]
We denote the right-hand side of the above equation by $\phi^c(x)$, and we call it the $c$-conjugate of $\phi$.
One can define similarly the notion of $c$-concave function on $M$.
\end{definition}
The $c$-superdifferential of $\psi$ at a point $y \in N$ is defined by
\begin{equation}
    \label{c-superdiff}
    \partial^c \psi(y) = \{x \in M \mid \forall z \in N, \psi(z) - c(x,z) \leq \psi(y) - c(x,y)  \}
\end{equation}
Note that $\psi$ is $c$-concave iff for any $y \in N$ its c-superdifferential $ \partial^c \psi(y)$ is non-empty. We can now introduce the notion of strong c-concavity.

\begin{definition}[strong $c$-concavity on $D$]
\label{def:stcconc}
We say that a c-concave function $\psi$ is strongly $c$-concave on a set $D \subseteq M \times N$ and with modulus $\omega$ if for all $x,y,z$ such that $(x,y) \in D, (x,z) \in D$ and $x \in \partial^c \psi(y)$:
\begin{equation}
\label{strong_concavity}
\psi(z) - c(x,z) \leq \psi(y) - c(x, y) - \omega(\d_N(y,z))
\end{equation}
\end{definition}
In the above definition, the modulus $\omega : \R_+ \to \R_+$ is an increasing function that satisfies $\omega(0)=0$.
One can check that when $c(x,y) = - \sca{x}{y}$ and $\omega(r) = C r^2$ the notion of strong concavity and strong c-concavity are equivalent.
Moreover if a function $\psi : N \to \R$ is strongly c-concave, then for $y \neq z$ in $N$, $\partial^c \psi(y) \cap \partial^c \psi(z) = \emptyset$, or equivalently for $x \in M$ there exists a unique minimizer of $y \mapsto c(x,y) - \psi(y)$.
This implies that the transport map associated to $\psi$ is uniquely defined by minimizing $c(x,\cdot) -\psi$:
\[ \forall x \in M \quad T(x) = \argmin_{y \in N} c(x,y) - \psi(y) \]

\subsection*{Contribution} This paper is concerned with stability problems in optimal transport. We introduce the notion of \emph{strong c-concavity}, which is central  to get stability results.

\begin{itemize}
  \item We provide two stability results  in Section~\ref{sec:stability} that depend on an assumption of strong $c$-concavity. First, we extend the $1/2$-H\"older  stability result of Ambrosio stated in~\cite{gigli2011holder} to general cost function $c$ (Theorem~\ref{th:stability-cconc}). Our result is local around transport maps associated to \emph{strongly $c$-concave} potential. Second, we generalize a result of  Li and Nochetto~\cite{li2021quantitative} that estimates the distance of a transport plan to an optimal transport map (the source and target measures being fixed) in terms of the suboptimality gap  (Proposition~\ref{prop:stabplan}). 
We then use this result to obtain quantitative stability  of the transport plan with respect to both measures (Proposition~\ref{prop:stabbothmeasureW1}), following the strategy of Li-Nochetto ~\cite{li2021quantitative} for the quadratic cost.

\item We provide in Section~\ref{sec:criterioncconc} the central result of this paper (Theorem~\ref{th:criterionstrongcconc}), which is a differential criterion for a potential function $\psi$ to be strongly $c$-concave. This result generalizes a sufficient condition for c-convexity proposed by Villani~\cite[Th. 12.46]{villani2008optimal}. It requires
 that $M, N$ are two smooth $d$-dimensional complete Riemannian manifolds.
Similarly to Villani, we require a local condition on the derivatives of the potential $\psi$ and a weak  Ma-Trudinger-Wang condition~\cite{Ma2005regularity} . 
In Section~\ref{sec:otstability}, we combine Theorem~\ref{th:criterionstrongcconc} to the stability results of Section~\ref{sec:stability} to get  local stability results for optimal  transport maps.

\item The last two sections are dedicated to the applications of our stability results to two optimal transport problems on the sphere, with cost functions taking the value $+\infty$.
In Section~\ref{sec:reflector} we consider the reflector antenna problem, which is a non-imaging optics problem that can be written as optimal transport~\cite{wang2004design}.
Section~\ref{sec:gaussmeasure} is dedicated to the prescription of the Gaussian curvature measure of a convex body, originally introduced by Alexandrov~\cite{alexandrov1950convex} and rephrased as an optimal transport problem by Oliker~\cite{oliker2007embedding}.
\end{itemize}

\section{Stability under strong c-concavity}\label{sec:stability}
In this section we assume that $M$ and $N$ are Polish spaces.  We provide stability results in the neighborhood of transport maps that are associated to strongly c-concave Kantorovitch potential. The stability result of Section~\ref{sec:stabtarget} is  with respect to variations of the target measure, whereas the result in Section~\ref{sec:stabboth} is with respect to variations of both the source and the target measures. This last result is a consequence of an error bound for a fixed optimal transport problem given in Section~\ref{subsec:errorbound}. As a side note, we also remark in the last section that strong $c$-concavity implies H\"{o}lder regularity of transport maps. 

\subsection{Stability with respect to the target measure}\label{sec:stabtarget}
The following theorem extends to general cost functions a theorem of Ambrosio~ \cite{gigli2011holder}, using a reformulation proposed in \cite{delalande2021quantitative}. 
The hypothesis that the transport map $T$ is Lipschitz (in the formulation of \cite{delalande2021quantitative}) is replaced by the assumption that the transport map is induced by a strongly c-concave potential $\psi$, i.e.
$$
\forall x \in M\quad  T(x) \in \argmin_{y\in N} c(x,y) - \psi(y).
$$
\begin{theorem}
\label{th:stability-cconc}
Let $D \subseteq M \times N$ be a compact set and $c : M \times N \to \R \cup \{ + \infty\}$ be a cost function of class $\Class^1$ on $D$.
Let $\mu \in \Prob(M)$ and $\nu_0,\nu_1 \in \Prob(N)$.  We assume that there exists optimal transport maps $T_i$ from $\mu$ to $\nu_i$ with associated potential $\psi_i : N \to \R$ ($i=0,1$) such that:
\begin{itemize}
    \item $\psi_0$ is Lipschitz on $N$ and c-concave on $D$.
    \item $\psi_1$ is Lipschitz on $N$ and strongly c-concave with modulus $\omega$ on $D$.
    \item The maps $T_i$ satisfies for any $x \in M$, $(x,T_i(x)) \in D$.
\end{itemize}
Then,
\begin{equation}
\label{eq:stabtarget}
\int_M \omega(\d_N(T_0 (x), T_1(x))) d\mu(x) \leq (\Lip(\psi_0) + \Lip(\psi_1)) W_1(\nu_0, \nu_1) 
\end{equation}
\end{theorem}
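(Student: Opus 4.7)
The plan is to combine three ingredients, all pointwise in $x\in M$: the strong $c$-concavity of $\psi_1$ evaluated at $y=T_1(x),z=T_0(x)$, the plain $c$-concavity of $\psi_0$ evaluated at $y=T_0(x),z=T_1(x)$, and Kantorovich--Rubinstein duality applied to $\psi_1-\psi_0$.

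First I would record the basic facts coming from the definition of ``induced by a potential'': since $T_i(x)\in\argmin_y c(x,y)-\psi_i(y)$, one has $x\in\partial^c\psi_i(T_i(x))$ by the very definition of the $c$-superdifferential. Both points $(x,T_i(x))$ lie in the compact set $D$ by hypothesis, so I may apply the strong $c$-concavity inequality \eqref{strong_concavity} to $\psi_1$ at $y=T_1(x)$, $z=T_0(x)$:
\[
\psi_1(T_0(x)) - c(x,T_0(x)) \;\leq\; \psi_1(T_1(x)) - c(x,T_1(x)) \;-\; \omega(\d_N(T_0(x),T_1(x))),
\]
which I rewrite as
\[
\omega(\d_N(T_0(x),T_1(x))) \;\leq\; [\psi_1(T_1(x))-\psi_1(T_0(x))] \;-\; [c(x,T_1(x))-c(x,T_0(x))].
\]
Next I use the (plain) $c$-concavity of $\psi_0$ applied at $y=T_0(x)$, $z=T_1(x)$, which rearranges to
\[
c(x,T_1(x))-c(x,T_0(x)) \;\geq\; \psi_0(T_1(x))-\psi_0(T_0(x)).
\]
Substituting into the previous display kills the cost terms entirely and yields the pointwise potential-only bound
\[
\omega(\d_N(T_0(x),T_1(x))) \;\leq\; [\psi_1(T_1(x))-\psi_1(T_0(x))] \;+\; [\psi_0(T_0(x))-\psi_0(T_1(x))].
\]

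Now I integrate against $\mu$ and use $(T_i)_\#\mu=\nu_i$: the right-hand side becomes
\[
\int_N \psi_1\,\d(\nu_1-\nu_0) \;+\; \int_N \psi_0\,\d(\nu_0-\nu_1) \;=\; \int_N (\psi_1-\psi_0)\,\d(\nu_1-\nu_0).
\]
Since $\psi_0$ and $\psi_1$ are Lipschitz on $N$, so is their difference with $\Lip(\psi_1-\psi_0)\leq \Lip(\psi_0)+\Lip(\psi_1)$, and Kantorovich--Rubinstein duality gives
\[
\int_N (\psi_1-\psi_0)\,\d(\nu_1-\nu_0) \;\leq\; (\Lip(\psi_0)+\Lip(\psi_1))\,W_1(\nu_0,\nu_1),
\]
which is exactly \eqref{eq:stabtarget}.

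There is no really deep step here; the argument is a direct two-side variational comparison in the spirit of Ambrosio--Gigli, with strong convexity of $\psi_1^*$ replaced by strong $c$-concavity of $\psi_1$. The only subtlety to watch is ensuring that the pairs $(x,T_i(x))$ at which one evaluates \eqref{strong_concavity} actually lie in $D$ (which is precisely the third bullet of the hypotheses) and that the image measure identity $(T_i)_\#\mu=\nu_i$ is applied to Borel functions $\psi_0,\psi_1$ that are bounded on the support of $\nu_0,\nu_1$, which follows from their Lipschitz continuity on $N$ together with compactness. The choice of expressing the final bound via $W_1$ rather than $W_2$ is dictated by the fact that the only regularity assumed on $\psi_0,\psi_1$ is Lipschitz continuity.
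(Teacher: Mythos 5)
Your proof is correct and coincides with the paper's own argument in all essentials: you combine the strong $c$-concavity of $\psi_1$ at $(T_1(x),T_0(x))$ with the $c$-concavity of $\psi_0$ at $(T_0(x),T_1(x))$, cancel the cost terms, push forward by $(T_i)_\#\mu=\nu_i$, and apply Kantorovich--Rubinstein to $\psi_1-\psi_0$. The only cosmetic difference is that you establish the pointwise inequality first and then integrate, while the paper integrates first and applies the pointwise bounds under the integral sign; the mathematics is identical.
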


\begin{remark}
\label{rem:dist}
The left hand side of inequality~\eqref{eq:stabtarget} measures the  distance between transport maps $T_0$ and $T_1$. To see this let us consider a simpler case where $M$ and $N$ are domains of $\R^d$ and $\omega(r) = r^2$ then we get
\[\int_M \omega(\d_N(T_0 (x), T_1(x))) d\mu(x) = \nr{T_1 - T_0}_{L^2(\mu)}^2 \]
and in that case, Theorem~\ref{th:stability-cconc} amounts to bounding the $L^2$ norm of the distance between transport maps.
\end{remark}

\begin{remark}[Discretization of the target measure]
Assume that we have two absolutely continuous measures $\mu \in \Prob(M)$ and $\nu \in \Prob(N)$ and an optimal transport map $T$ from $\mu$ to $\nu$ satisfying all the hypothesis of Theorem~\ref{th:stability-cconc}.
One can pick a family of points $(y_i)_{1 \leq i \leq n}$ in the target space $N$ and approximate the measure $\nu$ by a discrete measure $\nu_h$ of the form
\[ \nu_h = \sum_i \nu(V_i) \delta_{y_i} \]
where $(V_i)_{1 \leq i \leq n}$ is a Voronoi tesselation of $N$ around the points $(y_i)_{1 \leq i \leq n}$ chosen in an appropriate way in the support of $\nu$.
The parameter $h$ is given by $h = \max_{1 \leq i \leq n} \diam(V_i)$ so that $W_1(\nu, \nu_h) \leq h$. 
We can compute the optimal transport map $T_h$ between $\mu$ and $\nu_h$ using semi-discrete methods such as~\cite{kitagawa2019convergence}. Then, Theorem~\ref{th:stability-cconc} implies
\[ \int_M \omega(\d_N(T(x), T_h(x))) d\mu(x) \leq C h\]
where the constant $C$ depends on the Lipschitz constants of the potentials, which can be controlled explicitely in many cases.
If the modulus $\omega(r)$ is quadratic, then the $\LL^2(\mu)$ distance between $T$ and $T_h$ is controlled by $h^{1/2}$.
\end{remark}

\begin{proof}[Proof of Theorem~\ref{th:stability-cconc}]
We have
\[\sca{\nu_1 - \nu_0}{\psi_1 - \psi_0} = \int_N \psi_1 d_N(\nu_1 - \nu_0) +  \int_N \psi_0 d_N(\nu_0 - \nu_1) \]
Let $A = \int_N \psi_1 d_N(\nu_1 - \nu_0)$ and $B = \int_N \psi_0 d_N(\nu_0 - \nu_1)$.
Since $T_{i\#} \mu = \nu_i$ we have 
\begin{align*}
A &= \int_N \psi_1 d\nu_1 - \int_N \psi_1 d\nu_0 \\
 &= \int_M \psi_1(T_1(x)) d\mu(x) - \int_M \psi_1(T_0(x)) d\mu(x)
\end{align*}
For $x \in M$ we have $x \in \partial^c \psi_i(T_i(x))$. Then the strong $c$-concavity of $\psi_1$ gives
\begin{align*} 
A &= \int_M \psi_1(T_1(x)) - \psi_1(T_0(x)) d\mu(x) \\
&\geq \int_M c(x, T_1(x)) - c(x, T_0(x)) + \omega(\d_N(T_0(x), T_1(x))) d\mu 
\end{align*}
Now since $\psi_0$ is also $c$-concave, we have
\[ B \geq \int_M - c(x, T_1(x)) + c(x, T_0(x)) d\mu\]
Summing these two inequalities gives
\[ \int_M \omega(\d_N(T_0 (x), T_1(x))) d\mu(x) \leq  \int_N \psi_1 - \psi_0 d_N(\nu_1 - \nu_0) \]
Since $\psi_0$ and $\psi_1$ are Lipschitz, we have 
\begin{align*}
\int_N \psi_1 - \psi_0 d_N(\nu_1 - \nu_0) 
&\leq (\Lip(\psi_0) + \Lip(\psi_1)) W_1(\nu_0, \nu_1)
\end{align*}
where the last inequality is given by Kantorovich-Rubinstein theorem.
\end{proof}

\subsection{Error bounds for optimal transport problems}\label{subsec:errorbound}
In this section, we generalize in Proposition~\ref{prop:stabplan} a stability result of  Li and Nochetto~\cite{li2021quantitative} to general cost functions, using the notion of strong $c$-concavity. This result allows to bound in Corollary~\ref{cor:stabplanW1} the Wasserstein distance between the optimal transport map and any transport plan with the same marginals by the suboptimality gap of the transport plan.

\begin{proposition}
\label{prop:stabplan}
Let $\mu \in \Prob(M)$, $\nu \in \Prob(N)$ and $T: M \to N$ be an optimal transport map from $\mu$ to $\nu$.
We assume that $T$
is induced by a strongly c-concave potential $\psi : N \to \R$  with modulus $\omega$ on a compact subset  $D$ of $M\times N$ wich contains the graph of $T$. 
Then any transport plan $\gamma \in \Gamma(\mu, \nu)$ supported on $D$ satisfies
\begin{equation*}
\int_{M \times N} \omega(\d_N(T(x),y)) d\gamma(x,y) \leq \int_{M \times N} c(x,y) d\gamma(x,y) - \int_M c(x,T(x)) d\mu(x)
\end{equation*}
\end{proposition}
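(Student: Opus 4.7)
The strategy is to apply the strong $c$-concavity inequality pointwise on the support of $\gamma$ with the pair $(T(x), y)$, and then integrate against $\gamma$ so that the potential terms cancel thanks to the marginal constraints.

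First, I would observe that since $T$ is induced by $\psi$, we have $T(x) \in \argmin_z c(x,z) - \psi(z)$ for every $x \in M$, which translates (by rearranging the minimum inequality) exactly into the statement $x \in \partial^c \psi(T(x))$ in the sense of the $c$-superdifferential~\eqref{c-superdiff}. Hence the point $y := T(x)$ in Definition~\ref{def:stcconc} is an admissible point at which to apply strong $c$-concavity.

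Next, fix $(x,y)$ in $\spt(\gamma) \subseteq D$. Since the graph of $T$ is contained in $D$, we also have $(x, T(x)) \in D$. Applying~\eqref{strong_concavity} with the roles of $y$ and $z$ taken by $T(x)$ and $y$ respectively yields
\begin{equation*}
\psi(y) - c(x,y) \leq \psi(T(x)) - c(x, T(x)) - \omega(\d_N(T(x), y)),
\end{equation*}
which rearranges into
\begin{equation*}
\omega(\d_N(T(x), y)) + \psi(y) - \psi(T(x)) \leq c(x,y) - c(x, T(x)).
\end{equation*}

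Finally, I would integrate this inequality against $\gamma$. The two potential contributions cancel: since $\gamma$ has second marginal $\nu$, $\int \psi(y) \dd\gamma(x,y) = \int \psi \dd\nu$, and since $\gamma$ has first marginal $\mu$ with $T_\#\mu = \nu$, $\int \psi(T(x)) \dd\gamma(x,y) = \int \psi(T(x)) \dd\mu(x) = \int \psi \dd\nu$. Similarly, $\int c(x, T(x)) \dd\gamma(x,y) = \int c(x, T(x)) \dd\mu(x)$. Putting these together gives exactly the claimed inequality.

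There is no real obstacle in this argument; the only care needed is to check that the integrations make sense, which is ensured by the compactness of $D$ (so that $c$ is bounded on $D$, and $\psi$ may be assumed bounded on the compact set $\pi_N(D)$ up to replacing it by its $c$-biconjugate, after verifying that this does not affect the hypotheses). The key conceptual point is that the cancellation of the potential terms through the marginal constraints on $\gamma$ is what turns a pointwise $c$-concavity inequality into a global quantitative stability bound comparing any competitor plan to the optimal map.
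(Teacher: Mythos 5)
Your proof is correct and follows essentially the same route as the paper's: apply the strong $c$-concavity inequality pointwise with base point $T(x)$ and competitor $y$, then integrate against $\gamma$ and use the marginal constraints (together with $T_\#\mu=\nu$) to cancel the potential terms. The remark on integrability is a reasonable extra precaution but not a substantive departure.
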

The left hand side of this equation is called the suboptimality gap of $\gamma$, and measures how worse the transport plan $\gamma$ behaves compared to the optimal transport map $T$.
\begin{proof}
The strong c-concavity of $\psi$ implies that for any $x,y \in D$,
\[ \psi(y) \leq \psi(T(x)) - c(x,T(x)) + c(x,y) - \omega(\d_N(T(x),y)). \]
Moreover since $T_\#\mu = \nu$, we have
\[ \int_N \psi(y)d\nu(y) = \int_M \psi(T(x)) d\mu(x) \]
which combined with the strong c-concavity of $\psi$ gives
\begin{align*}
0 &= \int_N \psi(y)d\nu(y) - \int_M \psi(T(x)) d\mu(x)\\
&= \int_{D} \psi(y) - \psi(T(x)) d\gamma(x,y) \\
&\leq \int_{D} c(x,y) - c(x,T(x)) - \omega(\d_N(T(x),y)) d\gamma(x,y) \\
&= \int_{D} c(x,y)d\gamma(x,y) - \int_M c(x,T(x))d\mu(x) - \int_{D} \omega(\d_N(T(x), y))d\gamma(x,y)
\end{align*}
Rearranging this inequality gives the desired conclusion.
\end{proof}

We can rephrase this proposition using the the 1-Wasserstein distance
$\Wass_1$ in $\Prob(M \times N)$ induced by the distance
$$\d_{M \times
  N}((x,y),(x',y')) = \d_M(x,x') + \d_N(y,y').$$

\begin{corollary}
\label{cor:stabplanW1}
Under the assumptions of Proposition~\ref{prop:stabplan}, if the modulus of the Kantorovitch potential $\psi$ is $\omega(r) = Cr^2$, one has
\[ \Wass_1(\gamma, \gamma_T) \leq \frac{1}{\sqrt{C}} \left( \int_{M \times N} c(x,y) d\gamma(x,y) - \int_M c(x,T(x)) d\mu(x) \right)^{1/2} \]
where $\gamma_T = (Id, T)_\# \mu$.
\end{corollary}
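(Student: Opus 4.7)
The plan is to build an explicit coupling between $\gamma$ and $\gamma_T$, use the product-metric structure to reduce the cost of this coupling to a single integral in the $N$-variable, and then combine a Cauchy--Schwarz estimate with the suboptimality bound of Proposition~\ref{prop:stabplan}.

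First, I would define the map $\Phi : M \times N \to (M \times N) \times (M \times N)$ by $\Phi(x,y) = ((x,y),(x,T(x)))$ and set $\pi := \Phi_\# \gamma$. The first marginal of $\pi$ is trivially $\gamma$, and the second marginal is $(\mathrm{id},T)_\# \mu = \gamma_T$ because $\gamma$ has first marginal $\mu$. Hence $\pi \in \Gamma(\gamma,\gamma_T)$ is an admissible coupling, and by definition of $\Wass_1$ on $\Prob(M\times N)$ equipped with the product distance $\d_{M\times N}$,
\[
\Wass_1(\gamma,\gamma_T) \;\leq\; \int_{M\times N} \d_{M\times N}\bigl((x,y),(x,T(x))\bigr)\,\dd\gamma(x,y) \;=\; \int_{M\times N} \d_N(y,T(x))\,\dd\gamma(x,y),
\]
where the last equality uses $\d_M(x,x)=0$.

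Next, since $\gamma$ is a probability measure, Jensen (or equivalently Cauchy--Schwarz applied to the constant function $1$ and $\d_N(y,T(x))$) gives
\[
\int_{M\times N} \d_N(y,T(x))\,\dd\gamma(x,y) \;\leq\; \left(\int_{M\times N} \d_N(y,T(x))^2\,\dd\gamma(x,y)\right)^{1/2}.
\]
Applying Proposition~\ref{prop:stabplan} with the quadratic modulus $\omega(r)=Cr^2$ yields
\[
C\int_{M\times N} \d_N(y,T(x))^2\,\dd\gamma(x,y) \;\leq\; \int_{M\times N} c(x,y)\,\dd\gamma(x,y) - \int_M c(x,T(x))\,\dd\mu(x),
\]
and dividing by $C$ and taking square roots then chaining with the two previous inequalities delivers exactly the claimed estimate.

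There is really no hard step here: the substance is already packaged inside Proposition~\ref{prop:stabplan}. The only points requiring mild care are checking that $\pi$ has the correct marginals (which uses $T_\# \mu = \nu$ only implicitly, through the construction of $\gamma_T$) and verifying that the plan $\gamma$ and the graph-plan $\gamma_T$ are both supported inside $D$, so that the hypotheses of Proposition~\ref{prop:stabplan} legitimately apply when invoking the inequality for $\d_N(y,T(x))^2$.
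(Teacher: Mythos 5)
Your proof is correct and follows essentially the same route as the paper: you construct the same explicit coupling $\pi$ pushing $\gamma$ forward by $(x,y)\mapsto((x,y),(x,T(x)))$ (the paper uses the mirror-image map $S$), reduce its cost to $\int \d_N(T(x),y)\,\dd\gamma$ using the product metric, and then combine Cauchy--Schwarz with Proposition~\ref{prop:stabplan}. The only difference is notational (order of the two factors in the coupling), so nothing further to add.
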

\begin{proof}
Let $S : M \times N \to (M \times N)^2$ defined by
\[ S(x,y) = (S_1(x,y), S_2(x,y))\]
where $S_1(x,y) = (x,T(x))$ and $S_2(x,y) = (x,y)$.
Let $\pi = S_\# \gamma \in \Prob((M \times N)^2)$. One can check that $\pi \in \Gamma(\gamma_T, \gamma)$, which implies
\begin{align*}
W_1(\gamma_T, \gamma) &\leq \int_{(M \times N)^2} \d_{M \times N}((x,y),(x',y')) \dd \pi(x,y,x',y') \\
&= \int_{M \times N} \d_{M \times N}(S_1(x,y), S_2(x,y)) \dd \gamma(x,y) \\
&= \int_{M \times N} \d_N(T(x),y) \dd \gamma(x,y).
\end{align*}
We use the Cauchy-Schwarz inequality in $L^2(M \times N, \gamma)$ and
Proposition~\ref{prop:stabplan} to get the desired result.
\end{proof}

\subsection{Stability with respect to both measures}\label{sec:stabboth}

Here we apply Corollary~\ref{cor:stabplanW1}  to show stability results of transport plans with respect to both the source and the target measures.  Our result holds for general cost functions and is inspired by a result of Li and Nochetto~\cite{li2021quantitative} that holds in the quadratic case. 
We denote by $\d_M$ the distance on $M$ and $\d_N$ the distance on $N$.
We also choose for distance on the product space $\d_{M \times N}((x,y),(x',y')) = \d_M(x,x') + \d_N(y,y')$.
Throughout this section, we require the cost function $c$ to be Lipschitz on the whole product space $M \times N$.

\begin{proposition}[Stability with respect to both measures]
\label{prop:stabbothmeasureW1} Let $\mu , \tilde{\mu} \in \Prob(M)$ and $\nu , \tilde{\nu} \in \Prob(N)$. 
Let $c : M \times N \to \R$ be a cost function which is Lipschitz on $M \times N$.
Let $T : M \to N$ be an optimal transport map between $\mu$ and $\nu$, and $\tilde{\gamma}$ be an optimal transport plan between $\tilde{\mu}$ and $\tilde{\nu}$ for the cost $c$. We assume that $T$ is induced by a strongly c-concave potential  $\psi : N \to \R$ with associated modulus $\omega(r) = C r^2$ on $D=M \times N$.
Then we have
\[ \Wass_1(\gamma_T, \tilde{\gamma}) \leq \eps + \sqrt{\frac{2 \Lip(c)}{C} \eps}, \quad \hbox{where } \eps := \Wass_1(\tilde{\mu},\mu) + \Wass_1(\nu,\tilde{\nu}).  \]
\end{proposition}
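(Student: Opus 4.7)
The plan is to reduce the stability statement to the error bound of Corollary~\ref{cor:stabplanW1} by constructing an intermediate transport plan $\gamma \in \Gamma(\mu,\nu)$ that is close to $\tilde{\gamma}$ in $\Wass_1$ and whose suboptimality gap can be bounded in terms of $\eps$. The standard tool is the gluing lemma: let $\pi_1\in\Gamma(\mu,\tilde{\mu})$ and $\pi_2\in\Gamma(\tilde{\nu},\nu)$ be optimal plans for $\Wass_1(\mu,\tilde{\mu})$ and $\Wass_1(\tilde{\nu},\nu)$ respectively. Gluing $\pi_1$ to $\tilde{\gamma}$ along the common marginal $\tilde{\mu}$, and then gluing $\pi_2$ along the common marginal $\tilde{\nu}$, yields a measure $\sigma$ on $M\times M\times N\times N$ whose $(x_1,x_2,y_1,y_2)$-marginals are $(\mu,\tilde{\mu},\tilde{\nu},\nu)$, with $(x_2,y_1)_\#\sigma=\tilde{\gamma}$, $(x_1,x_2)_\#\sigma=\pi_1$, and $(y_1,y_2)_\#\sigma=\pi_2$. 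I set $\gamma=(x_1,y_2)_\#\sigma\in\Gamma(\mu,\nu)$.

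The first step is a direct $\Wass_1$ comparison between $\gamma$ and $\tilde{\gamma}$: the coupling $(x_1,y_2,x_2,y_1)_\#\sigma\in\Gamma(\gamma,\tilde{\gamma})$ gives
\[
\Wass_1(\gamma,\tilde{\gamma})\leq\int \bigl[\d_M(x_1,x_2)+\d_N(y_1,y_2)\bigr]\dd\sigma=\Wass_1(\mu,\tilde{\mu})+\Wass_1(\tilde{\nu},\nu)=\eps.
\]
The second step estimates the suboptimality gap of $\gamma$. Using the Lipschitz bound on $c$ twice (with $c(x_1,y_2)-c(x_2,y_1)\leq \Lip(c)(\d_M(x_1,x_2)+\d_N(y_1,y_2))$),
\[
\int c\,\dd\gamma=\int c(x_1,y_2)\,\dd\sigma\leq \int c(x_2,y_1)\,\dd\sigma+\Lip(c)\,\eps=\int c\,\dd\tilde{\gamma}+\Lip(c)\,\eps.
\]
Then I need a reverse bound: by optimality of $\tilde{\gamma}$ for $(\tilde{\mu},\tilde{\nu})$, the cost $\int c\,\dd\tilde{\gamma}$ is bounded above by $\int c\,\dd\gamma'$ for any plan $\gamma'\in\Gamma(\tilde{\mu},\tilde{\nu})$. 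Applying the symmetric gluing construction starting from $\gamma_T\in\Gamma(\mu,\nu)$ and the same couplings $\pi_1,\pi_2$ produces such a $\gamma'$ with $\int c\,\dd\gamma'\leq \int c(x,T(x))\dd\mu+\Lip(c)\,\eps$. Combining,
\[
\int c\,\dd\gamma-\int c(x,T(x))\dd\mu\leq 2\,\Lip(c)\,\eps.
\]

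The third step applies Corollary~\ref{cor:stabplanW1} to the plan $\gamma$ (note that with $D=M\times N$ the support assumption is automatic), which yields
\[
\Wass_1(\gamma,\gamma_T)\leq\sqrt{\tfrac{1}{C}\cdot 2\Lip(c)\eps}=\sqrt{\tfrac{2\Lip(c)}{C}\eps}.
\]
Concatenating with the first step via the triangle inequality gives the announced bound $\Wass_1(\gamma_T,\tilde{\gamma})\leq\eps+\sqrt{2\Lip(c)\eps/C}$. The only mildly delicate point is the symmetric reverse inequality $\int c\,\dd\tilde{\gamma}\leq \int c(x,T(x))\dd\mu+\Lip(c)\,\eps$, which I obtain by exchanging the roles of $(\mu,\nu)$ and $(\tilde{\mu},\tilde{\nu})$ in the gluing and invoking the optimality of $\tilde{\gamma}$; everything else is a direct chain of Lipschitz estimates and the gluing lemma.
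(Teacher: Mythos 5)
Your proof is correct and follows the paper's strategy almost step for step: glue the $\Wass_1$-optimal couplings with $\tilde{\gamma}$ to produce an intermediate plan $\gamma\in\Gamma(\mu,\nu)$, bound $\Wass_1(\gamma,\tilde{\gamma})\leq\eps$ directly, bound the suboptimality gap of $\gamma$ by $2\Lip(c)\,\eps$, then apply Corollary~\ref{cor:stabplanW1} and the triangle inequality. The one place you deviate is the reverse cost estimate $\int c\,\dd\tilde{\gamma}\leq\int c\,\dd\gamma_T+\Lip(c)\,\eps$: the paper delegates this to Lemma~\ref{lemma:globalcostgap}, which proves Lipschitz stability of the value $\TC^c$ by Kantorovich duality (Lipschitz dual potentials plus Kantorovich--Rubinstein), whereas you obtain it by a symmetric primal gluing, building a competitor $\gamma'\in\Gamma(\tilde{\mu},\tilde{\nu})$ from $\gamma_T$ and invoking optimality of $\tilde{\gamma}$. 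Your variant is self-contained and slightly more elementary (no duality needed beyond what is already used to define $\psi$), while the paper's route isolates a reusable general fact about the transport functional; both yield exactly the same constants.
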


The end of this section is devoted to the proof of this proposition.
As in \cite{li2021quantitative}, we will use the gluing lemma
~\cite{santambrogio2015optimal,villani2008optimal}.
\begin{lemma}[gluing of measures]
\label{lemma:gluing}
Let $(X_i, \mu_i)$ be probability spaces for $i \in \{1,2,3\}$, and $\gamma_{12} \in \Gamma(\mu_1, \mu_2)$, $\gamma_{23} \in \Gamma(\mu_2, \mu_3)$.
Then there exists $\pi \in \Prob(X_1 \times X_2 \times X_3)$ such that $\pi(\cdot, \cdot, X_3) = \gamma_{12}$ and $\pi(X_1, \cdot, \cdot) = \gamma_{23}$.
Or equivalently 
\[ p_{12\#}\pi = \gamma_{12} \quad p_{23\#}\pi = \gamma_{23} \]
 where $p_{ij}$ is the projection defined by $p_{ij}(x_1, x_2, x_3) = (x_i, x_j)$.
\end{lemma}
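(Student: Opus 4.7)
The plan is to construct $\pi$ via disintegration of the two given plans against their common marginal $\mu_2$, and then verify that the two marginal conditions hold by Fubini. I will work in the setting of Polish spaces (already standing throughout Section~\ref{sec:stability}), so that the disintegration theorem applies to both $\gamma_{12}$ and $\gamma_{23}$ with respect to their $X_2$-marginal.

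\emph{Step 1: Disintegration.} Since $\gamma_{12}\in\Prob(X_1\times X_2)$ has second marginal $\mu_2$, the disintegration theorem yields a $\mu_2$-almost everywhere uniquely determined measurable family $(\gamma_{12}^{x_2})_{x_2\in X_2}$ of probability measures on $X_1$ such that
\[
\int_{X_1\times X_2} f(x_1,x_2)\, d\gamma_{12}(x_1,x_2) = \int_{X_2}\!\!\int_{X_1} f(x_1,x_2)\, d\gamma_{12}^{x_2}(x_1)\, d\mu_2(x_2)
\]
for all bounded measurable $f$. Likewise, $\gamma_{23}\in\Prob(X_2\times X_3)$ has first marginal $\mu_2$, so we obtain a measurable family $(\gamma_{23}^{x_2})_{x_2\in X_2}$ of probability measures on $X_3$ with the analogous property.

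\emph{Step 2: Definition of $\pi$.} Define $\pi\in\Prob(X_1\times X_2\times X_3)$ by
\[
\int_{X_1\times X_2\times X_3} f\, d\pi := \int_{X_2}\!\!\int_{X_1}\!\!\int_{X_3} f(x_1,x_2,x_3)\, d\gamma_{23}^{x_2}(x_3)\, d\gamma_{12}^{x_2}(x_1)\, d\mu_2(x_2)
\]
for bounded measurable $f$. Measurability of $x_2\mapsto \gamma_{12}^{x_2}$ and $x_2\mapsto \gamma_{23}^{x_2}$, combined with Fubini's theorem, ensures that this formula defines a probability measure on the product space.

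\emph{Step 3: Checking the marginals.} For $f$ depending only on $(x_1,x_2)$, the inner integral against $\gamma_{23}^{x_2}$ contributes a factor $1$ (since $\gamma_{23}^{x_2}$ is a probability measure), so the defining formula collapses to the disintegration identity for $\gamma_{12}$, giving $p_{12\#}\pi=\gamma_{12}$. Symmetrically, if $f$ depends only on $(x_2,x_3)$, the $x_1$-integral against $\gamma_{12}^{x_2}$ produces a factor $1$, and the remaining expression is exactly the disintegration identity for $\gamma_{23}$, yielding $p_{23\#}\pi=\gamma_{23}$.

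The only genuinely nontrivial ingredient is the existence of the disintegrations, which relies on $X_1,X_2,X_3$ being (at least) Polish; this is the standard hypothesis in this context and is implicit in the paper. Everything else is a direct application of Fubini's theorem to the explicit formula for $\pi$.
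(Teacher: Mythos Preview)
Your proof is correct and is precisely the standard disintegration argument for the gluing lemma. Note, however, that the paper does not actually supply a proof of this lemma: it simply states it and cites \cite{santambrogio2015optimal,villani2008optimal}, where this very argument (disintegrate both couplings over the shared $X_2$-marginal, take the fiberwise product, and verify the marginals by Fubini) can be found. So there is nothing to compare against in the paper itself; you have filled in what the authors chose to outsource to the literature, and done so in the expected way.
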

We also need the following (easy) lemma, showing that the transport cost
$$\TC^c(\mu,\nu) := \min_{\gamma\in\Gamma(\mu,\nu)} \int c\dd\gamma$$
is Lipschitz with respect to perturbations of the measures when $c$ is Lipschitz.
\begin{lemma}
\label{lemma:globalcostgap}
Let $c : M \times N \to \R$ be a Lipschitz cost function.
Let $\mu, \tilde{\mu} \in \Prob(M)$ and $\nu, \tilde{\nu} \in \Prob(N)$. 
Then we have
\[ \left| \TC^c(\mu, \nu) - \TC^c(\tilde{\mu},\tilde{\nu}) \right|
\leq \Lip(c) (\Wass_1(\mu, \tilde{\mu}) + \Wass_1(\nu, \tilde{\nu})).\]
\end{lemma}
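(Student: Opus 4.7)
The plan is to prove one direction of the inequality by gluing together three transport plans and then swap roles to conclude. By symmetry it suffices to show
\[ \TC^c(\mu,\nu) - \TC^c(\tilde\mu,\tilde\nu) \leq \Lip(c)(\Wass_1(\mu,\tilde\mu)+\Wass_1(\nu,\tilde\nu)), \]
since interchanging $(\mu,\nu)$ with $(\tilde\mu,\tilde\nu)$ yields the reverse bound and hence the absolute value.

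To construct a competitor plan between $\mu$ and $\nu$, I would take: an optimal transport plan $\tilde\gamma\in\Gamma(\tilde\mu,\tilde\nu)$ for the cost $c$, an optimal $\Wass_1$ plan $\alpha\in\Gamma(\mu,\tilde\mu)$ between the source measures, and an optimal $\Wass_1$ plan $\beta\in\Gamma(\tilde\nu,\nu)$ between the target measures. Two applications of the gluing Lemma~\ref{lemma:gluing} produce a probability measure $\pi\in\Prob(M\times M\times N\times N)$ whose projection on the first two factors is $\alpha$, on the middle two factors is $\tilde\gamma$, and on the last two factors is $\beta$. (Concretely, first glue $\alpha$ and $\tilde\gamma$ along $\tilde\mu$ to obtain a measure on $M\times M\times N$; then glue the result with $\beta$ along $\tilde\nu$.) Pushing $\pi$ forward by the projection $(x,\tilde x,\tilde y,y)\mapsto(x,y)$ gives a transport plan $\gamma\in\Gamma(\mu,\nu)$.

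The key estimate is then obtained by adding and subtracting $c(\tilde x,\tilde y)$:
\begin{align*}
\TC^c(\mu,\nu) &\leq \int c(x,y)\,\dd\gamma(x,y) \;=\; \int c(x,y)\,\dd\pi(x,\tilde x,\tilde y,y) \\
&= \int \bigl(c(x,y)-c(\tilde x,\tilde y)\bigr)\,\dd\pi \;+\; \int c(\tilde x,\tilde y)\,\dd\tilde\gamma(\tilde x,\tilde y)\\
&\leq \Lip(c) \int \bigl(\d_M(x,\tilde x)+\d_N(\tilde y,y)\bigr)\,\dd\pi \;+\; \TC^c(\tilde\mu,\tilde\nu),
\end{align*}
where the Lipschitz bound on $c$ uses the product distance $\d_{M\times N}$. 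Since the marginals of $\pi$ on $(x,\tilde x)$ and $(\tilde y,y)$ are precisely $\alpha$ and $\beta$, the integral splits into $\Wass_1(\mu,\tilde\mu)+\Wass_1(\tilde\nu,\nu)$, giving the required bound.

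The only point requiring a little care is the iterated gluing so that all four marginal constraints hold simultaneously; otherwise the proof is a direct Lipschitz estimate. I do not anticipate a serious obstacle, since Lemma~\ref{lemma:gluing} is exactly designed for this kind of three-space chaining and can be applied twice.
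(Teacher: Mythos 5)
Your proof is correct, but it takes a genuinely different route from the paper's. The paper proves this lemma on the dual side: it takes the optimal Kantorovich potentials $(\phi,\psi)$ for the pair $(\mu,\nu)$, observes that they can be chosen $\Lip(c)$-Lipschitz (being $c$-transforms of each other for a Lipschitz cost), applies weak duality to $(\tilde\mu,\tilde\nu)$ to get $\TC^c(\tilde\mu,\tilde\nu)\geq\int\phi\,\dd\tilde\mu+\int\psi\,\dd\tilde\nu$, and then bounds the difference via Kantorovich--Rubinstein. You instead work on the primal side: you build an explicit competitor coupling of $\mu$ and $\nu$ by iterated gluing along $\tilde\mu$ and $\tilde\nu$, then use the pointwise Lipschitz estimate $|c(x,y)-c(\tilde x,\tilde y)|\leq\Lip(c)\,\d_{M\times N}((x,y),(\tilde x,\tilde y))$ and split the integral via the marginal constraints. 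Both are short and elementary; the dual argument avoids the iterated gluing bookkeeping, while your primal construction is more concrete and in fact mirrors almost verbatim the gluing estimate the paper performs later in the proof of Proposition~\ref{prop:stabbothmeasureW1} (inequality~\eqref{ineq:gammatilde}) — so your approach essentially folds that estimate into the lemma itself, at the cost of redoing the gluing when the lemma is subsequently applied. One small point worth making explicit: the iterated application of Lemma~\ref{lemma:gluing} requires viewing $\pi_1\in\Prob(M\times M\times N)$ as a coupling in $\Gamma(\alpha,\tilde\nu)$ with $X_1=M\times M$, $X_2=N$, $X_3=N$; this is legitimate and is exactly how the gluing is used in the paper's Proposition~\ref{prop:stabbothmeasureW1}.
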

\begin{proof} Kantorovich duality gives
  \[ \TC^c(\mu, \nu) = \max_{\phi \oplus \psi \leq c} \int_{M} \phi \dd \mu + \int_N \psi \d \nu. \]
  Moreover, since the cost is Lipschitz, the maximum is attained in
  the dual problem; one can assume that the maximum is attained for
  two potentials $\phi,\psi$ satisfying $\phi =\psi^c$ and $\phi =
  \psi^c$. In particular both $\phi$ and $\psi$ are Lipschitz
  continuous with Lipschitz constant lower than $\Lip(c)$.  Kantorovitch (weak)
  duality applied to the two measures $\tilde{\mu}$ and $\tilde{\nu}$
  gives
\[ \TC^c(\tilde{\mu},\tilde{\nu}) \geq \int_M \phi \d \tilde{\mu}+ \int_N \psi \d \tilde{\nu}. \]
We thus get
\[\TC^c(\mu, \nu) - \TC^c(\tilde{\mu},\tilde{\nu})  \leq \int_M \phi \d (\mu -\tilde{\mu})+ \int_N \psi \d (\nu - \tilde{\nu}) \leq \Lip(c) ( \Wass_1(\mu, \nu) + \Wass_1(\tilde{\mu},\tilde{\nu}) )\]
where the last inequality is given by Kantorovich-Rubinstein Theorem.
By symmetry the same result holds when we exchange $\mu, \nu$ and $\tilde{\mu},\tilde{\nu}$.
\end{proof}

\begin{proof}[Proof of Proposition~\ref{prop:stabbothmeasureW1}]
Let $\alpha \in \Gamma(\mu, \tilde{\mu})$ and $\beta \in \Gamma(\tilde{\nu},\nu)$ be optimal transport plans for the cost $d_M$ and $d_N$.
Let $\pi \in \Prob(M^2 \times N^2)$ be a gluing of $\alpha, \tilde{\gamma}$ and $\beta$, i.e. 
\[ p_{12\#}\pi = \alpha, \quad p_{23\#}\pi = \tilde{\gamma}, \quad p_{34\#}\pi = \beta \]
Defining  $\gamma = p_{14\#} \pi \in \Gamma(\mu, \nu)$, we get
\begin{align}
\label{ineq:W1plansmarginals}
\Wass_1(\gamma, \tilde{\gamma}) 
&\leq \int_{M^2 \times N^2} \d_M(x,x') + \d_N(y,y') \d \pi(x,x',y,y') \nonumber \\
&= \int_{M^2} \d_M(x,x') \d \alpha(x,x') + \int_{N^2} \d_N(y,y') \d \beta(y,y') \nonumber  \\
&= \Wass_1(\tilde{\mu},\mu) + \Wass_1(\nu,\tilde{\nu})
\end{align}
We also have 
\begin{align}
\label{ineq:gammatilde}
& \int_{M \times N} c(x,y) \dd \gamma \nonumber \\
= &\int_{M^2 \times N^2} c(x,y) \dd \pi(x,x',y',y) \nonumber \\
= &\int_{M^2 \times N^2} c(x',y') + c(x, y) - c(x',y') \dd \pi(x,x',y',y) \nonumber \\ 
\leq &\int_{M^2 \times N^2} c(x',y') + \Lip(c)(\d_M(x,x') + \d_N(y,y')) \dd \pi(x,x',y',y) \nonumber \\ 
= &\int_{M \times N} c(x',y') \dd \tilde{\gamma} + \Lip(c)\left( \int_{M^2} \d_M(x,x')\dd\alpha +  \int_{N^2} \d_N(y,y') \dd\beta \right) \nonumber \\ 
\leq &\int_{M \times N} c(x,y)\dd \tilde{\gamma} + \Lip(c) (W_1(\mu, \tilde{\mu}) + W_1(\nu, \tilde{\nu}))
\end{align}
The transport plans $\gamma_T = (Id, T)_\# \mu \in \Gamma(\mu, \nu)$ and $\tilde{\gamma} \in \Gamma(\mu, \nu)$ are  optimal, so that by Lemma~\ref{lemma:globalcostgap},
\[\int_{M \times N} c(x,y)\dd \tilde{\gamma} \leq \int_{M \times N} c(x,y)\dd \gamma_T + \Lip(c) (W_1(\mu, \tilde{\mu}) + W_1(\nu, \tilde{\nu})) \]
which combined with~\eqref{ineq:gammatilde} gives
\[\int_{M \times N} c(x,y) \dd \gamma - \int_{M \times N} c(x,y)\dd \gamma_T \leq 2 \Lip(c) (W_1(\mu, \tilde{\mu}) + W_1(\nu, \tilde{\nu})) \]
Corollary~\ref{cor:stabplanW1} then implies that
\[ \Wass_1(\gamma, \gamma_T) \leq \left[ \frac{2 \Lip(c)}{C} (\Wass_1(\tilde{\mu},\mu) + \Wass_1(\nu,\tilde{\nu}) ) \right]^{1/2} \]
Finally, using the triangle inequality along with \eqref{ineq:W1plansmarginals} we get
\begin{align*} 
\Wass_1(\tilde{\gamma}, \gamma_T) &\leq  \Wass_1(\tilde{\gamma}, \gamma) + \Wass_1(\gamma, \gamma_T) \\
&\leq \Wass_1(\tilde{\mu},\mu) + \Wass_1(\nu,\tilde{\nu}) + \left( \frac{2 \Lip(c)}{C}  (\Wass_1(\tilde{\mu},\mu) + \Wass_1(\nu,\tilde{\nu}) ) \right)^{1/2}\\
&=  \eps + \sqrt{\frac{2 \Lip(c)}{C}  \eps} \qedhere
\end{align*}

\end{proof}

\subsection{A remark on regularity}
The above results show that the notion of strong $c$-concavity is sufficient to get stability results. In fact, this notion can also lead to regularity of the associated transport maps, as expressed in the following lemma. 
\begin{lemma}[Regularity under strong $c$-concavity]
Let us assume that the cost function $c : M \times N \to \R$ is Lipschitz on $M \times N$ and let  $T : M \to N$ be a transport map induced by a  strongly c-concave potential $\psi : N \to \R$,  with continuity modulus $\omega(r) = C r^2$ on $M \times N$. Then $T$ is $1/2$-H\"older: 
\[ \d_N(T(x),T(x')) \leq \left( \frac{\Lip(c)}{C} \d_M(x,x') \right)^{1/2} \] 
\end{lemma}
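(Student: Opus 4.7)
The plan is to apply the strong $c$-concavity inequality twice, once at each of the two points, and exploit the Lipschitz regularity of the cost to control the resulting mixed differences. Because $T$ is induced by $\psi$, we have $x \in \partial^c \psi(T(x))$ for every $x \in M$, so Definition~\ref{def:stcconc} (with $D = M \times N$ and $\omega(r) = Cr^2$) is directly applicable.

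Concretely, fix $x, x' \in M$ and write $y = T(x)$, $y' = T(x')$. Applying strong $c$-concavity at the pair of points $(y, y')$ using $x \in \partial^c \psi(y)$ gives
\[
\psi(y') - c(x, y') \leq \psi(y) - c(x, y) - C\, d_N(y, y')^2,
\]
and similarly, using $x' \in \partial^c \psi(y')$ at the pair $(y', y)$,
\[
\psi(y) - c(x', y) \leq \psi(y') - c(x', y') - C\, d_N(y, y')^2.
\]
Adding these two inequalities cancels the $\psi$ terms and yields
\[
2C\, d_N(y, y')^2 \leq \bigl(c(x, y') - c(x', y')\bigr) + \bigl(c(x', y) - c(x, y)\bigr).
\]

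Each of the two bracketed terms on the right is the difference of $c$ evaluated at points differing only in the first coordinate, so the Lipschitz hypothesis on $c$ (with respect to the product distance $d_M + d_N$) bounds each by $\Lip(c)\, d_M(x, x')$. Therefore $2C\, d_N(T(x), T(x'))^2 \leq 2\Lip(c)\, d_M(x, x')$, and taking square roots gives exactly the claimed $1/2$-Hölder estimate. There is no serious obstacle here: the argument is a symmetric two-point application of Definition~\ref{def:stcconc}, and the only place where the Lipschitz hypothesis on $c$ enters is to absorb the cross-cost differences into $\Lip(c)\, d_M(x, x')$.
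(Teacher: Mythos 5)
Your proof is correct and follows essentially the same route as the paper: apply the strong $c$-concavity inequality at $x$ with $y = T(x')$ and symmetrically at $x'$ with $y = T(x)$, add the two inequalities so the $\psi$-terms cancel, and absorb the four cost evaluations into $2\Lip(c)\,d_M(x,x')$ via the Lipschitz hypothesis. No gap.
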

\begin{proof}
Let $x \in M$. Since $T$ is induced by a strongly c-concave potential $\psi$ we have
$T(x)= \argmin_{y \in N} c(x,y) - \psi(y)$. 
The strong c-concavity of $\psi$ implies that for every $y\in N$ 
\[ c(x,y) - \psi(y) \geq c(x,T(x)) - \psi(T(x)) + \omega(\d_N(y,T(x)))\]
Now let $x' \in M$. By choosing $y = T(x')$ the above inequality becomes
\[ c(x,T(x')) - \psi(T(x')) \geq c(x,T(x)) - \psi(T(x)) + \omega(\d_N(T(x'),T(x)))\]
This inequality still holds when we exchange $x$ and $x'$, summing the two gives
\[ 2 \omega(\d_N(T(x),T(x'))) \leq c(x',T(x)) + c(x,T(x')) - c(x',T(x')) - c(x, T(x)) \]
and since $c$ Lipschitz we have
\[ C \d_N(T(x),T(x'))^2 \leq \Lip(c) \d_M(x,x'). \qedhere \] 
\end{proof}
Thus, strong $c$-concavity of the potential entails some regularity of
the transport map, generalizing what is well-known in the convex
setting (i.e. if $\psi$ is strongly convex, then $\psi^*$ is
$\Class^{1,1}$). The next section will show a partial converse
statement, under strong assumptions on the cost function.

\section{Sufficient condition for strong c-concavity}\label{sec:criterioncconc}
This section is all about the notion of strong c-concavity that we used through the previous section to deduce stability results of optimal transport maps.
From now on, we assume  that $M$ and $N$ are smooth complete Riemannian manifolds. 

It is known that the notions of convexity and strong convexity can be easily characterized by conditions on the Hessian for smooth functions.
The c-convexity is not that easy to study but for cost functions $c$ that are regular enough in a certain sense, there is a differential criterion for c-convexity, given by Villani~\cite{villani2008optimal}.
In this section we extend Villani's result for strong c-concavity, in other words we show that the strong c-concavity of a function can also be guaranteed by conditions on its derivatives.
This result is presented in Corollary~\ref{cor:strongcconc}.
To do so we need the cost function $c : M \times N \to \Rsp\cup\{+\infty\}$ to satisfy the Ma-Trudinger-Wang (MTW) condition, which is a well known condition in regularity theory of optimal transport.

\subsection{The Ma-Trudinger-Wang tensor} We recall in this section the notion of MTW tensor~\cite{villani2008optimal}. 
Recall that we are working with two smooth complete Riemannian manifolds $M$ and $N$, and a cost function $c:M\times N\to \R \cup \{+ \infty \}$.
We denote by  $\Dom(\nabla_x c) \subseteq M \times N$ the set of differentiability of the cost $c$ and $\Dom'(\nabla_x c(x, \cdot)) = \inter(\Dom(\nabla_x c(x, \cdot)))$ the interior of the domain of definition of $\nabla_x c(x, \cdot)$, then
\begin{equation}
\label{Domp}
\Dompx = \{ (x,y) \mid x \in \inter(M), y \in \Dom'(\nabla_x c(x, \cdot))) \}
\end{equation}

\begin{definition}[Twisted cost]
The cost $c$ satifies the (Twist) condition if $\nabla_x c (x, \cdot)$ is injective on its domain of definition, i.e. for any $x, y, y'$ such that $(x,y) \in \Dompx$ and $(x, y') \in \Dompx$:
\[\nabla_xc(x,y)= \nabla_xc(x,y')  \implies y = y' \]
\end{definition}

\begin{definition}[STwist]
\label{STwist}
The cost satisfies the strong Twist condition (STwist) if $c$ is $\Class^2$, $\nabla_x c$ is one-to-one and $D_{xy}^2 c$ is non singular on $\Dompx$.
\end{definition}

If the cost function satisfies (Twist), then for $x \in \inter(M)$ the function $- \nabla_x c(x, \cdot)$ is invertible on its image $\Ix \subseteq T_x M$, i.e. 
\[ - \nabla_x c(x, \cdot) : \Dom'(\nabla_x c(x, \cdot)) \subseteq N \to \Ix \subseteq T_xM \]
is one-to-one.

\begin{definition}[c-exponential]
When the cost $c$ satisfies the (Twist) condition, we can define the c-exponential for $x \in M$ by $\cexp_x = \left(- \nabla_x c (x, \cdot)\right)^{-1}$, giving for $p \in \Ix$:
\begin{align*}
 \cexp_x(p) : \Ix \subseteq T_xM  &\to \Dom'(\nabla_x c(x, \cdot))\subseteq N\\
  p &\to \big(\nabla_x c(x, \cdot)\big)^{-1}(-p)
 \end{align*}
\end{definition}

\begin{definition}[c-segment]
A c-segment is the image of a usual segment by the map $\cexp_x$.
We denote $(y_t)_{0 \leq t \leq 1} = [y_0, y_1]_{x}$ the $c$-segment between $y_0$ and $y_1$ with base $x$ defined for $p_0 = (- \nabla_x c)^{-1}(x, y_0)$ and $p_1 = (- \nabla_x c)^{-1}(x, y_1)$ by
\[ y_t = \cexp_{x}((1 - t) p_0 + t p_1)\]

\end{definition}

\begin{definition}[c-convex set] Let $A \subseteq N$. 
\begin{itemize}
\item We say that $A$ is c-convex with respect to $x \in M$ if for any $y_0, y_1 \in A$, there is a c-segment $[y_0, y_1]_x$ entirely contained in $A$.
\item The set  $A$ is said to be c-convex with respect to a set $B \subseteq M$ if $A$ is c-convex with respect to any $x \in B$.
\item A set $D \subseteq M \times N$ is said to be totally c-convex if for any two points $(x, y_0) \in D$ and $(x, y_1) \in D$, the c-segment $(y_t)_{0\leq t \leq 1}=[y_0, y_1]_x$  satisfies for any $t$ $(x,y_t)\in D$.
\item We say that $D \subseteq M \times N$ is symmetrically c-convex if both $[x_0, x_1]_y \subseteq D$ and $[y_0, y_1]_x \subseteq D$.
\end{itemize}
\end{definition}

\begin{definition}[MTW tensor]
Assuming that $c$ is of class $\Class^4$ on $ \Dompx$ and satisfies the (STwist) condition, the Ma-Trudinger-Wang tensor is defined for $(x_0,y_0) \in \Dompx$ and $(\zeta,\eta) \in T_x M \times T_y N$ by
\[ \mathfrak{S}_c (x_0,y_0)(\eta,\zeta) = -\frac{3}{2}
 \frac{\partial^2}{\partial q_{\tilde{\eta}}^2} \frac{\partial^2}{\partial y_{\zeta}^2}\big(c(\cexp_{y_0}(q),y)\big)\Big|_{y=y_0,q=-\nabla_yc(x_0,y_0)}\]
with $\tilde{\eta} = - \nabla_{xy}^2 c(x_0,y_0)  \eta \in T_xM$.
\end{definition}
In the above definition $- \nabla_{xy}^2 c(x_0,y_0) : T_xM \times T_yN \to \R$ is a bilinear form which is non singular since (STwist) is satisfied. We then identify for $\eta \in T_yN$ the linear form  $- \nabla_{xy}^2 c(x_0,y_0) \eta = \tilde{\eta} : T_xM \to \R$ with a vector of $T_xM$ using the Riemannian structure.
\begin{definition}[weak MTW]
We say that the weak MTW condition (MTWw) is satisfied on a compact set $D \subseteq M \times N$if there exists a constant $C > 0$ such that for any $(x,y) \in D$ and $(\zeta,\eta) \in T_x M \times T_y N$ we have
\begin{equation}\label{eq:MTWw}
 \mathfrak{S}_c (x,y)(\eta,\zeta) \geq - C |\sca{\zeta}{\tilde{\eta}}|\nr{\zeta}\nr{\tilde{\eta}} \tag{MTWw}
\end{equation}
This condition was introduced by Ma, Trudinger and Wang~\cite{Ma2005regularity} and is often referred to as \emph{(A3w)}.
\end{definition}

\subsection{Differential criterion for strong c-concavity}

The goal here is to generalize Villani's differential criterion~\cite{villani2008optimal} (detailed in the following theorem) for c-convexity to our definition of strong c-concavity. 
Our proof is highly inspired from Villani's one, in particular we study the same real valued function $h : [0,1] \to \R$ and show inequalities that are similar and also require positivity of the MTW tensor.

\begin{theorem}[Differential criterion for c-convexity, {\cite[Th. 12.46]{villani2008optimal}}]
\label{th:criterioncconc}
Let $D \subseteq M \times N$ be a closed symmetrically c-convex set and $c \in \Class^4(D,\R)$ such that $c$ and $\check{c}$ satisfy (STwist) on $D$. Assume that the weak MTW condition \eqref{eq:MTWw} is satisfied on $D$. Let $\X = \proj_M(D)$ and $\psi \in \Class^2(\X,\R)$. If for any $x \in \X$ there exists $y \in N$ such that $(x,y) \in D$ and
\[ \begin{cases}
\nabla\psi(x) + \nabla_xc(x,y) = 0 \\
D^2 \psi(x) + D_{xx}^2 c(x,y) \geq 0 
\end{cases} \]
Then $\psi$ is $c$-convex on $D$.
\end{theorem}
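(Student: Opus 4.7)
The plan is to reduce the c-convexity of $\psi$ to a one-dimensional convexity statement along c-segments, for which the weak MTW condition is tailor-made. Unwinding the definition, c-convexity of $\psi$ on $\X$ amounts to showing that for every $x_1 \in \X$ the point $y_1 \in N$ furnished by the hypothesis makes $x \mapsto \psi(x) + c(x, y_1)$ globally minimized on $\X$ at $x = x_1$. The two pointwise hypotheses are exactly the first- and second-order optimality conditions for a local minimum, so the whole task is a local-to-global upgrade.

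For the upgrade, I would fix an arbitrary $x_0 \in \X$ and form the c-segment $(x_t)_{t\in[0,1]} = [x_0, x_1]_{y_1}$ based at $y_1$ (defined via the $\check c$-exponential). The symmetric c-convexity of $D$ keeps this segment inside $\X$. Set $h(t) := \psi(x_t) + c(x_t, y_1)$. The first-order condition at $x_1$ gives $h'(1) = 0$, so if one can show $h'' \geq 0$ on the whole interval, then $h$ is convex with a critical point at $t=1$, forcing $h(0) \geq h(1)$, which is exactly the desired inequality $\psi(x_0) + c(x_0, y_1) \geq \psi(x_1) + c(x_1, y_1)$. Since $x_0$ was arbitrary, c-convexity of $\psi$ then follows.

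Proving $h'' \geq 0$ is the principal obstacle. Differentiating twice and exploiting that $-\nabla_y c(x_t, y_1)$ is affine in $t$ by the very definition of the c-segment based at $y_1$, one solves for $\ddot x_t$ in terms of third derivatives of $c$ and recasts $h''(t)$ as a ``pseudo-Hessian'' quadratic form in $\dot x_t$ plus a correction that coincides, up to a universal constant, with the Ma--Trudinger--Wang tensor $\mathfrak{S}_c(x_t, y_1)$ evaluated at $(\dot x_t, \eta_t)$, where $\eta_t \in T_{y_1}N$ is the dual velocity produced by $-\nabla^2_{xy} c(x_t, y_1)$. At $t = 1$ the pseudo-Hessian collapses to the nonnegative form $D^2 \psi(x_1) + D_{xx}^2 c(x_1, y_1)$ given by the hypothesis; at other times one applies the hypothesis at $x_t$ with its own witness $y_t^\ast$ and uses the weak MTW inequality to transport this Hessian information from $y_t^\ast$ to $y_1$ along the c-segment. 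The MTWw slack $-C|\langle\zeta,\tilde\eta\rangle|\|\zeta\|\|\tilde\eta\|$ is then absorbed by a completion-of-the-square against the positive semidefinite part.

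The hard part will be the tensorial bookkeeping: tracking many third derivatives of $c$ through the c-exponential so that they reassemble precisely into $\mathfrak{S}_c$, organizing the comparison of Hessians at the different $y$-base points $y_t^\ast$ and $y_1$, and verifying that the MTWw slack is genuinely dominated by the pseudo-Hessian without further assumptions on $\psi$. I would follow Villani's third-derivative calculations for this bookkeeping, adjusting sign conventions as required.
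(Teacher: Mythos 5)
Your overall strategy matches Villani's, which is also the approach this paper adapts for the strong version (Theorem~\ref{th:criterionstrongcconc}): reduce to a one-dimensional statement along a $c$-segment, set $h(t)$ to be the cost-minus-potential along that segment, compute $h''$, and identify the Ma--Trudinger--Wang tensor in $h''$ after a Taylor expansion that transfers the pseudo-Hessian from the fixed base point to the moving witness $y_t^\ast$ (at which the second-order hypothesis applies). Up to this point the skeleton is correct, including your choice to base the $c$-segment at $y_1$ and to parametrize so that $t=1$ is the critical endpoint with $h'(1)=0$.

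The gap is in the claim that $h''\geq 0$, and specifically in the sentence asserting that the MTWw slack $-C\,|\langle\zeta,\tilde\eta\rangle|\,\|\zeta\|\,\|\tilde\eta\|$ is ``absorbed by a completion-of-the-square against the positive semidefinite part.'' In this theorem the pseudo-Hessian $D^2\psi(x_t)+D^2_{xx}c(x_t,y_t^\ast)$ is only $\geq 0$; there is no lower bound of the form $\geq\lambda\,\mathrm{Id}$, hence no quadratic surplus against which one could complete a square, and no reason to expect $h''\geq 0$ pointwise. What makes the argument go through is a structural identity you do not exploit: the scalar $|\langle\zeta,\tilde\eta\rangle|$ appearing in the MTWw bound coincides with $|h'(t)|$ (this is the computation behind Lemma~\ref{lem:hderivative} and Proposition~\ref{prop:ineqhsec}, and the analogous identity appears in Villani's proof). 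This turns the bound on the MTW integral into the weaker differential inequality $h''(t)\geq -C\,|h'(t)|$, which must then be combined with the boundary condition $h'=0$ at the critical endpoint and an ODE comparison (Gr\"onwall) argument, as in Lemma~\ref{lem:ODE}. That comparison gives the correct sign of $h'$ on all of $[0,1]$ --- hence $h(0)\geq h(1)$ --- even though $h$ need not be convex and $h''$ may be negative somewhere. Without this identity and the accompanying ODE step, the local-to-global upgrade you sketch does not close.
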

This theorem is given for a potential function $\psi$ on $\X \subseteq M$ and gives a c-convexity result while we consider $\psi : N \to \R$ and work on c-concavity, but this is really just a matter of convention.
Also Villani needs the Hessian $D^2 \psi(x) + D_{xx}^2 c(x,y)$ to be positive semi-definite to obtain c-convexity, while we are naturally going to need the Hessian $D^2_{yy}c(x,y) - D^2\psi(y)$ to have eigenvalues bounded from below by a positive constant to obtain strong c-concavity.
A noticeable difference of c-convexity with respect to convexity is that it cannot be expressed locally, as we require the MTW tensor to be positive on the whole set $D$ which is a global condition.

\begin{theorem}[Differential criterion for strong c-concavity]
\label{th:criterionstrongcconc}
We consider $D \subseteq \Dompx \cap  \Dompy$ a symmetrically c-convex compact set and denote $\X = \proj_M(D)$, $\Y = \proj_N(D)$. We assume that $c \in \Class^4(D, \R)$, that $c$ and $\check{c}$ satisfy (STwist) on $D$ where $\check{c}(x,y) = \check{c}(y,x)$.
We also assume that the weak MTW condition is satisfied on $D$.
Let $\psi \in \Class^2(\Y, \R)$ be a c-concave function on $D$ and such that there exists $\lambda > 0$ satisfying for any $x \in \partial^c\psi(y)$
\[ D^2 \psi(y) - D_{yy}^2 c(x,y) \geq \lambda Id \]
Then $\psi$ is strongly $c$-concave on $D$ with modulus $\omega(\d_N(y,z)) = C \d_N(y,z)^2$, where $C > 0$ is a constant depending on $c$, $\X$ and $\Y$. This means that we have
\[ \psi(z) - c(x,z) \geq \psi(y) - c(x,y) + C\d_N(y,z)^2 \]
for the points $x \in \X$, $y,z \in \Y$ such that $x \in \partial^c\psi(y)$, $(x,y) \in D$ and $(x,z) \in D$.
\end{theorem}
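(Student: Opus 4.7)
The plan is to extend Villani's proof of Theorem~\ref{th:criterioncconc} quantitatively, so that the Hessian bound $\lambda\,\mathrm{Id}$ in the hypothesis propagates into a quadratic gap $C\,\d_N(y,z)^2$ in the conclusion. Setting $g(w) := c(x,w) - \psi(w)$, the strong c-concavity inequality of Definition~\ref{def:stcconc} is equivalent to the bound $g(z) - g(y) \geq C\,\d_N(y,z)^2$ for admissible $(x,y,z)$; since $x \in \partial^c\psi(y)$ says that $y$ is a minimizer of $g$, the statement amounts to a quantitative second-order strict minimality of $g$ at $y$, which I would establish along a c-segment from $y$ to $z$.

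First, I would exploit the symmetric c-convexity of $D$ to introduce the c-segment $y_t := \cexp_x((1-t)p_0 + tp_1)$ with $p_i := -\nabla_x c(x, y_i)$, $y_0 = y$ and $y_1 = z$; by hypothesis $(x, y_t) \in D$ for every $t \in [0,1]$, and (STwist) combined with the compactness of $\X$ and $\Y$ gives $|p_1 - p_0| \asymp \d_N(y,z)$. Define $h(t) := g(y_t)$. Because $\psi$ is $\Class^2$ and $y_0$ minimizes $g$, first-order optimality yields $\nabla \psi(y_0) = \nabla_y c(x, y_0)$, hence $h'(0) = 0$. Taylor's formula then reduces the task to establishing a uniform lower bound $h''(t) \geq c_0\, |p_1 - p_0|^2$ on $[0,1]$.

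The second-order analysis proceeds through the MTW tensor. A chain-rule computation expresses $h''(t)$ as $\langle (D_{yy}^2 c(x, y_t) - D^2 \psi(y_t)) \dot y_t, \dot y_t \rangle$ plus a cross term $\langle \nabla_y c(x, y_t) - \nabla \psi(y_t), \ddot y_t \rangle$ coming from the nonlinearity of $\cexp_x$. Following Villani, I would reparameterize the segment through the fiber $p \in T_x M$, in which it is a genuine straight line so that the $\ddot{\phantom{x}}$ contribution is reorganized; after this change of variables the cross term equals, up to sign, the MTW tensor $\mathfrak{S}_c(x, y_t)$ evaluated on the natural vectors $(\zeta, \eta)$, and the weak MTW condition~\eqref{eq:MTWw} supplies exactly the one-sided bound compatible with the positive leading term produced by the Hessian hypothesis.

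The hard part will be propagating the pointwise Hessian hypothesis to the interior of the c-segment: the bound $D_{yy}^2 c - D^2 \psi \geq \lambda\,\mathrm{Id}$ (in the sign convention of Definition~\ref{def:stcconc}) is assumed only at pairs $(x', y')$ with $x' \in \partial^c \psi(y')$, whereas the second-derivative estimate above requires control at $(x, y_t)$ for every $t \in (0, 1)$. I would handle this in the same spirit as Villani, via a maximum-principle/barrier argument: if the quadratic gap failed on some sufficiently short c-segment, a suitable auxiliary perturbation of $h$ would attain a local extremum at an interior point $y_{t_*}$ where either the Hessian hypothesis at its own c-superdifferential point is contradicted, or~\eqref{eq:MTWw} is violated. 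Integrating the resulting uniform lower bound on $h''$ with $h'(0)=0$ then yields the local gap $h(1) - h(0) \geq C\,\d_N(y,z)^2$, and a covering argument using the compactness of $D$ upgrades it to the required global estimate, with $C>0$ depending on $\lambda$, the MTW constant, and the $\Class^4$ norms of $c$ on $D$.
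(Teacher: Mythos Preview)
Your setup and identification of the difficulty are both correct, but the proposed resolution has a genuine gap: the cross term $\langle \nabla_y c(x,y_t)-\nabla\psi(y_t),\ddot y_t\rangle$ is a \emph{third}-order object in $c$, while $\mathfrak{S}_c$ is fourth-order, so the claim that the cross term ``equals, up to sign, the MTW tensor'' cannot be right as stated. Consequently your decomposition still leaves the Hessian evaluated at $(x,y_t)$, where the hypothesis says nothing, and the vague maximum-principle/covering argument you propose does not obviously close this gap --- the pairs $(x',y')$ with $x'\in\partial^c\psi(y')$ form a codimension-$d$ set, far from the full c-segment.

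The paper's key idea, which you are missing, is to \emph{shift the base point} rather than argue by barrier. Writing $\Phi(x'):=\big(D_{yy}^2 c(x',y_t)-D^2\psi(y_t)\big)(\hat\eta,\hat\eta)$ and passing to the coordinate $q=-\nabla_y c(\cdot,y_t)\in T_{y_t}N$ (this is where $\check c$ satisfying STwist and the \emph{symmetric} c-convexity of $D$ are used), one observes that the two terms in your expression for $h''(t)$ are exactly the first-order Taylor expansion $\tilde\Phi(\bar q_t)+D\tilde\Phi(\bar q_t)(q_t-\bar q_t)$, where $\bar q_t$ corresponds to $\bar x=x$ and $q_t$ corresponds to $x^t\in\partial^c\psi(y_t)$. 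Rewriting this as $\tilde\Phi(q_t)-\int_0^1 D^2_q\tilde\Phi\,(1-s)\,ds$, the zeroth-order term is the Hessian \emph{at the c-superdifferential point} $(x^t,y_t)$, where the hypothesis gives $\geq\lambda\|\hat\eta\|^2$, and the integral remainder is precisely (a multiple of) $\mathfrak S_c$ along a second c-segment in $M$. The MTW bound~\eqref{eq:MTWw} then yields not a uniform lower bound on $h''$, but the differential inequality $h''(t)\geq -C\,|h'(t)|+\lambda\|\hat\eta\|^2$; an ODE comparison argument first gives $h'\geq 0$ (using $h'(0)=0$), and then Gr\"onwall produces $h(1)-h(0)\geq C'\d_N(y,z)^2$. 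No covering step is needed.
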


\begin{corollary}[Strong c-concavity]
\label{cor:strongcconc}
We make the same hypothesis on $c$ and $D$, and just assume $\psi \in \Class^2(\Y, \R)$.
Let $T : \X \to \Y$ the map defined by
$T(x) = \argmin_y c(x, y) - \psi(y)$ be of class $\Class^1$ and 
satisfying for any $x \in \X$, $(x, T(x)) \in D$.
Then the function $\psi$ is strongly c-concave on the set $D$ with modulus $\omega(\d_N(y,z)) = C \d_N(y,z)^2$.
\end{corollary}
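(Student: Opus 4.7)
The plan is to derive Corollary~\ref{cor:strongcconc} by reducing it to Theorem~\ref{th:criterionstrongcconc}: the two items we must supply that are not already in the corollary's hypotheses are (a) the $c$-concavity of $\psi$ on $D$ and (b) a uniform lower bound of the form $D^2_{yy}c(x,y)-D^2\psi(y)\geq \lambda\,\mathrm{Id}$ for some $\lambda>0$ on the relevant points.

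First I would verify (a). For every $x\in\X$, the very definition of $T(x)$ as the (unique) minimizer of $y\mapsto c(x,y)-\psi(y)$ is precisely the statement that $x\in \partial^c\psi(T(x))$. Setting $\phi(x):=c(x,T(x))-\psi(T(x))$ on $\X$, a direct computation gives $\psi(y)=\phi^c(y)$ for every $y\in T(\X)$, so $\psi$ is realized as a $c$-transform at every point of $T(\X)$. Since both the Hessian hypothesis and the strong $c$-concavity conclusion in Theorem~\ref{th:criterionstrongcconc} are vacuous at any $y$ with $\partial^c\psi(y)=\emptyset$, this is the correct form of ``$c$-concavity on $D$'' needed to feed into that theorem.

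Next I would establish (b). At each $x\in\X$, the first-order optimality condition for $T(x)$ reads $\nabla_y c(x,T(x))=\nabla\psi(T(x))$, and the second-order necessary condition yields
\[ A(x):=D^2_{yy}c(x,T(x))-D^2\psi(T(x))\geq 0. \]
Differentiating the first-order identity in $x$, using that $T$ is of class $\Class^1$, produces
\[ D^2_{xy}c(x,T(x))+D^2_{yy}c(x,T(x))\,DT(x)=D^2\psi(T(x))\,DT(x), \]
i.e.\ $A(x)\,DT(x)=-D^2_{xy}c(x,T(x))$. By the (STwist) assumption, $D^2_{xy}c$ is nonsingular on $D$, so $A(x)$ must have full rank; combined with $A(x)\geq 0$ this forces $A(x)$ to be positive definite at each $x\in\X$. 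Continuity of $x\mapsto A(x)$ together with compactness of $\X$ then yields a uniform lower bound $A(x)\geq \lambda\,\mathrm{Id}$ for some $\lambda>0$.

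With (a) and (b) in hand, Theorem~\ref{th:criterionstrongcconc} applies and delivers strong $c$-concavity of $\psi$ on $D$ with quadratic modulus $\omega(r)=Cr^2$, the constant $C>0$ being inherited from the theorem and depending on $c$, $\X$ and $\Y$. The main obstacle I anticipate lies in step (a): one must be careful that ``$c$-concave on $D$'' as used in the theorem is compatible with the fact that we only know $\psi$ to be a $c$-transform on the image $T(\X)$, not on all of $\Y$. Once this bookkeeping is settled, the rest reduces to a routine implicit-function-type computation combined with compactness of $\X$.
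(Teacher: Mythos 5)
Your argument matches the paper's proof almost line for line: both derive the positivity of $A(x) = D^2_{yy}c(x,T(x)) - D^2\psi(T(x))$ by combining the second-order optimality condition with the identity $A(x)\circ DT(x) = -D^2_{xy}c(x,T(x))$ obtained by differentiating the first-order condition, then invoke (STwist) for nonsingularity and compactness of $\X$ for the uniform bound $A(x)\geq\lambda\,\mathrm{Id}$, and finally feed the result into Theorem~\ref{th:criterionstrongcconc}. The one step the paper makes explicit that you omit is the extension of the second-order necessary condition $A(x)\geq 0$ to points where $T(x)\in\partial\Y$ (the paper handles this by a continuity argument after first treating $T(x)\in\mathrm{int}(\Y)$); conversely, your explicit bookkeeping in point (a) about how the argmin defines a $c$-transform on $T(\X)$ and why that suffices is a genuine clarification that the paper leaves implicit.
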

\begin{remark}[Restriction of c-concavity to $D$]
Theorem~\ref{th:criterionstrongcconc} actually gives the strong $c$-concavity of the potential $\psi$ on a set $D$ where the cost function is smooth enough.
This can be an issue if we want to find a transport map $T : M \to N$ such that $T(x) = \argmin_{z \in N} c(x,y) - \psi(y)$, because we cannot be sure that the argmin will be obtained at a point $y$ such that $(x,y) \in D$.
This issue has to be treated independently for each application.
\end{remark}

\subsection{Proof of Theorem~\ref{th:criterionstrongcconc}.}
In this subsection we consider that all the hypothesis of Theorem~\ref{th:criterionstrongcconc} are satisfied.
For any $x\in \X$, we denote $\Y^x = \{ y \in N \mid (x,y) \in D \}$.
Let $\bar{y} \in \Y$ and $\bar{x} \in \partial^c \psi(\bar{y})$ such that $(\bar{x},\bar{y}) \in D$. Note that $\bar{x}$ always exists by hypothesis. Let us fix $y \in \Y^{\bar{x}}$. 
We want to show that  there exists a constant $C > 0$ independant of $\bar{x}, \bar{y}$ and $y$ such that
\begin{equation}
\label{eq:strong_cconc}
c(\bar{x},y) - \psi(y) \geq c(\bar{x}, \bar{y}) - \psi(\bar{y}) + C \d_N(y,\bar{y})^2
\end{equation}
We  put $(y_t)_{0 \leq t \leq 1} = [\bar{y}, y]_{\bar{x}}$ the $c$-segment between $\bar{y}$ and $y$ with base $\bar{x}$. Remark that the $c$-convexity of $D$ implies that for any $t$ in $[0,1]$, $(\bar{x}, y_t) \in D$.
We define the function $h$ by
\[ h(t) :=  c(\bar{x}, y_t) - \psi(y_t)\]
such that Equation~\eqref{eq:strong_cconc} writes 
\begin{equation}
\label{eq:hcconc} 
h(1) \geq h(0) + C \d_N(y,\bar{y})^2 
\end{equation}
The end of this section is devoted to the proof of Equation~\eqref{eq:hcconc}. \\

\noindent \textbf{Notation.}  We first introduce some notations. Note that $A_{\bar{x}}:=\nabla^2_{xy}c(\bar{x},y_t):T_{\bar{x}}M\times T_{y_t}N \to \Rsp$ is a bilinear form which is assumed to be nonsingular. For any $X\in T_{\bar{x}}M$ and $Y\in T_{y_t}N$, we can write $\nabla^2_{xy}c(\bar{x},y_t)(X,Y) = \sca{A_{\bar{x}} X}{Y}=\sca{^tA_{\bar{x}} Y}{X}$ where in some local coordinates $A_{\bar{x}}$ is an invertible matrix and $X$ and $Y$ are column matrices. Then $\nabla^2_{xy}c(\bar{x},y_t)(X,\cdot)$ is a linear form on $T_{y_t}N$  which is identified to the vector $A_{\bar{x}}X\in T_{y_t}N$. Similarly $^tA_{\bar{x}}Y\in T_{\bar{x}}M$. We take the same notation for $A_{x_s}=\nabla^2_{xy}c(x_s,y_t)$.

\begin{lemma}
\label{lem:hderivative}
\[ h'(t) =  \sca{\zeta}{\hat{\eta}}\]
and
\[
h''(t) = \Big(\D^2_{yy}c(x^t,y_t) - D^2\psi(y_t)\Big) (\hat{\eta},\hat{\eta})
+ \frac{2}{3} \int_0^1 \mathfrak{S}_c\Big( \cexp_{y_t}(\bar{q_t}+s\zeta), y_t \Big) (\bar{\zeta}, \hat{\eta}) (1-s) \d s,
\]
where $x^t \in \partial^c \psi(y_t)$,
\[
\begin{array}{ll}
\eta = \nabla_xc(\bar{x},\bar{y}) -\nabla_xc(\bar{x},y) \in T_{\bar{x}}M
& \hat{\eta} = -^tA_{\bar{x}}^{-1}\eta \in T_{y_t}N\\
\zeta= \nabla_yc(\bar{x},y_t) - \nabla \psi(y_t)\in T_{y_t}N
& \hat{\zeta} = -A_{\bar{x}}^{-1}\zeta \in T_{\bar{x}}M\\
\bar{q_t}:=-\nabla_yc(\bar{x},y_t) \in T_{y_t}M & \bar{\zeta} = - A_{x_s}^{-1}\zeta \in T_{x_s}N
\end{array}
\]
\end{lemma}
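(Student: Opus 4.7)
The plan is to differentiate $h$ twice by hand, extracting the velocity $\dot y_t$ from the defining equation of the $c$-segment $y_t$, and then converting the second-derivative expression into the stated form by introducing an auxiliary $c$-segment in $M$ based at $y_t$ and applying Taylor's formula with integral remainder.

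For $h'(t)$, the chain rule applied to $h(t)=c(\bar x,y_t)-\psi(y_t)$ gives $h'(t)=\sca{\nabla_y c(\bar x,y_t)-\nabla\psi(y_t)}{\dot y_t}=\sca{\zeta}{\dot y_t}$. To identify $\dot y_t$, I differentiate the defining relation $-\nabla_xc(\bar x,y_t)=p_0+t(p_1-p_0)$ of the $c$-segment in $t$: this yields $-{}^tA_{\bar x}\dot y_t=p_1-p_0=\eta$, hence $\dot y_t=-({}^tA_{\bar x})^{-1}\eta=\hat\eta$, which is the first formula.

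For $h''(t)$, I differentiate $h'(t)=\sca{\zeta(t)}{\hat\eta(t)}$ once more. The contribution from $\frac{d}{dt}\zeta$ produces the Hessian term $(D^2_{yy}c(\bar x,y_t)-D^2\psi(y_t))(\hat\eta,\hat\eta)$; the remaining contribution is $\sca{\zeta}{\ddot y_t}$, which is nonzero since $y_t$ is a $c$-segment rather than a geodesic, and whose value can be read off by differentiating the segment equation a second time in $t$. To pass from a Hessian at $\bar x$ to one at $x^t\in\partial^c\psi(y_t)$, I introduce the auxiliary $c$-segment $x_s:=\cexp_{y_t}(\bar q_t+s\zeta)$ in $M$ based at $y_t$, which satisfies $x_0=\bar x$, and, by (STwist) combined with the identity $\bar q_t+\zeta=-\nabla\psi(y_t)$, also $x_1=x^t$; differentiating its defining equation in $s$ gives $\dot x_s=\bar\zeta$. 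Applying Taylor's degree-one formula with integral remainder to $s\mapsto D^2_{yy}c(x_s,y_t)(\hat\eta,\hat\eta)$ rewrites the Hessian at $\bar x$ as the Hessian at $x^t$, minus a boundary term $\frac{d}{ds}\big|_{s=0}D^2_{yy}c(x_s,y_t)(\hat\eta,\hat\eta)$, minus an integral $\int_0^1(1-s)\frac{d^2}{ds^2}D^2_{yy}c(x_s,y_t)(\hat\eta,\hat\eta)\,ds$. The boundary term and $\sca{\zeta}{\ddot y_t}$ are both $\partial^3_{xyy}c$ at $(\bar x,y_t)$ contracted with $\zeta$ and two copies of $\hat\eta$; using the segment identities $-{}^tA_{\bar x}\hat\eta=\eta$ and $-A_{\bar x}\bar\zeta|_{s=0}=\zeta$, these two third-order contributions cancel. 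By the very definition of the MTW tensor, the remaining integrand $\frac{d^2}{ds^2}D^2_{yy}c(x_s,y_t)(\hat\eta,\hat\eta)$ equals $-\tfrac{2}{3}\mathfrak S_c(x_s,y_t)(\bar\zeta,\hat\eta)$, which when substituted produces the stated formula with factor $\tfrac{2}{3}$ and weight $(1-s)$.

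The main obstacle I expect is establishing the cancellation of the boundary term against $\sca{\zeta}{\ddot y_t}$: both are third-order objects expressed via different parametrisations, one along the $N$-segment $y_t$ and the other along the $M$-segment $x_s$, and reconciling them requires careful bookkeeping of the identifications between $T_{\bar x}M$ and $T_{y_t}N$ provided by $A_{\bar x}$ and its transpose, together with consistent conventions for $\hat\eta$, $\bar\zeta$, and the $\tilde\eta$ of the MTW definition. Once this cancellation is in hand, identifying the remaining integrand with $-\tfrac{2}{3}\mathfrak S_c$ is a direct unravelling of the Ma--Trudinger--Wang tensor.
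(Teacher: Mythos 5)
Your proposal is correct and follows essentially the same route as the paper: differentiate along the $c$-segment $y_t$, introduce the auxiliary $c$-segment $x_s=\cexp_{y_t}(\bar q_t+s\zeta)$ (equivalently the change of variable $q=-\nabla_yc(x,y_t)$), apply Taylor's formula with integral remainder, and identify the integrand with the MTW tensor. The only difference is a regrouping — the paper first shows $\sca{\zeta}{\ddot y_t}$ equals the first-order Taylor term $D\tilde\Phi(\bar q_t)\zeta$ and then Taylor-expands the combined quantity $\tilde\Phi(\bar q_t)+D\tilde\Phi(\bar q_t)\zeta$, whereas you Taylor-expand $\Phi(\bar x)$ alone and observe that the resulting boundary term $g'(0)=\nabla^3_{xyy}c(\bar x,y_t)(\hat\zeta,\hat\eta,\hat\eta)$ cancels $\sca{\zeta}{\ddot y_t}$; both hinge on the identical computation of $\ddot y_t$ and the identification $\dot x_s|_{s=0}=\hat\zeta$.
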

\begin{proof}[Proof of Lemma~\ref{lem:hderivative}]
Since $D$ is symmetrically c-convex and $(\bar{x}, \bar{y}) \in D$, $(\bar{x}, y) \in D$, we can differentiate $h$ as follows
$$
h'(t) = \sca{\nabla_yc(\bar{x},y_t) - \nabla \psi(y_t)}{\dot{y_t}}
$$
 We also have by differentiating $-\nabla_xc(\bar{x},y_t) = \bar{p} + t \eta$:
$$
\eta = -\nabla_{xy}^2c(\bar{x},y_t)\dot{y_t}= -^tA_{\bar{x}}\dot{y_t}
$$
So that $\hat{\eta}=-^tA_{\bar{x}}^{-1}\eta=\dot{y_t}$ and thus
$$
h'(t) = \sca{\zeta}{\hat{\eta}}.
$$
Differentiating $h'$ gives
$$
h''(t) = \Big(\nabla^2_{yy}c(\bar{x},y_t) - \nabla^2 \psi(y_t) \Big)(\dot{y_t},\dot{y_t}) + \sca{\zeta}{\ddot{y_t}}.
$$
By differentiating $-\eta = \nabla_{xy}^2c(\bar{x},y_t)\dot{y_t}$, one gets
$$
\nabla_{xyy}c(\bar{x},y_t)(\dot{y_t},\dot{y_t})+ \sca{\nabla_{xy}c(\bar{x},y_t)}{\ddot{y_t}}=0
$$
so that
$$
\ddot{y_t} = -^tA_{\bar{x}}^{-1} \nabla_{xyy}c(\bar{x},y_t)(\hat{\eta},\hat{\eta})
$$
and
$$
\sca{\zeta}{\ddot{y_t}} = \sca{\zeta}{-^tA_{\bar{x}}^{-1} \nabla_{xyy}c(\bar{x},y_t)(\hat{\eta},\hat{\eta})} = \sca{-A_{\bar{x}}^{-1}\zeta}{ \nabla_{xyy}c(\bar{x},y_t)(\hat{\eta},\hat{\eta})}.
$$
We therefore have
$$
h''(t)=  \Big(\nabla^2_{yy}c(\bar{x},y_t) - \nabla^2 \psi(y_t) \Big) (\hat{\eta},\hat{\eta})  + \sca{\hat{\zeta}}{\nabla_{xyy}c(\bar{x},y_t)(\hat{\eta},\hat{\eta})}
$$
We define $\Phi(x):= \Big(\nabla^2_{yy}c(x,y_t) - \nabla^2 \psi(y_t) \Big) (\hat{\eta},\hat{\eta})$.
Then we have for $X\in T_xM$
$$
D\Phi(x).X = \sca{X}{\nabla^3_{xyy}c(x,y_t)(\hat{\eta},\hat{\eta})},
$$
so that
$$
h''(t) = \Phi(\bar{x}) + D\Phi(\bar{x})\hat{\zeta}
$$
We put $\tilde{\Phi}(q)=\tilde{\Phi}(-\nabla_yc(x,y_t))=\Phi(x)$, so that for $X\in T_xM$
$$
D\Phi(\bar{x})X= D\tilde{\Phi}(\bar{q_t})(-A_{\bar{x}} X)
$$
For $x=\bar{x}$ and $\eta\in T_{y_t}N$
$$
D\Phi(\bar{x})\hat{\eta} = D\tilde{\Phi}(\bar{q_t})(-A_{\bar{x}} \hat{\eta}) =  D\tilde{\Phi}(\bar{q_t})\eta 
$$
 We put $q_t:=-\nabla_yc(x^t,y_t)$ with $x^t \in \partial^c \psi(y_t)$ and recall $\bar{q_t}=-\nabla_yc(\bar{x},y_t)$.We get $\nabla \psi(y_t) = \nabla_yc(x^t,y_t) = - q_t$ and therefore get $\zeta = q_t - \bar{q_t}$. Using the c-convexity of $D$ to differentiate $c$ at $(x_t, \cexp_{x_t}(\bar{p_t}+s\zeta))$, we get
$$
h''(t) = \tilde{\Phi}(\bar{q_t})+ D\tilde{\Phi}(\bar{q_t})({q_t}-{\bar{q_t}}) = \tilde{\Phi}(q_t) - \int_0^1 D^2_q \tilde{\Phi}(\bar{q_t}+s\zeta)(\zeta,\zeta)(1-s)\d s
$$
We have
$$
\tilde{\Phi}(q_t) = \Phi(x^t) = \Big( \nabla^2_{yy}c(x^t,y_t) - \nabla^2 \psi(y_t)\Big)(\hat{\eta},\hat{\eta})
$$
Using the change of variable $q= -\nabla_yc(x,y_t)\in T_{y_t}M$ (or  equivalently $x =\cexp_{y_t}(q)$), we get
$$
\tilde{\Phi}(q) = \Big(\nabla^2_{yy}c(\cexp_{y_t}(q),y_t) - \nabla^2 \psi(y_t))\Big) (\hat{\eta},\hat{\eta})
$$
Since $\nabla^2 \psi(y_t)$ does not depend on $q$, one gets by the definition of the MTW tensor, for any $\zeta \in T_q(T_{y_t}N)= T_{y_t}N$:
$$
D^2_{q} \tilde{\Phi}(q) (\zeta,\zeta) 
= \frac{\partial^2}{\partial q_{\zeta}^2} \frac{\partial^2}{\partial y_{\hat{\eta}}^2}\big(c(\cexp_{y_t}(q),y_t))\big)
=-\frac{2}{3}\mathfrak{S}_c(\cexp_{y_t}(q),y_t))\ (\bar{\zeta},\hat{\eta}) 
$$
where we put $\bar{\zeta} := -A_{x_s}^{-1} \zeta$ so as to have $\tilde{\bar{\zeta}} = \zeta$.
\end{proof}

\begin{lemma}
\label{lem:ODE}
Let $y \in \Class^1([0,1],\R)$ satisfying for $C > 0$,
\[ 
\begin{cases}
y'(t) \geq -C |y(t)| \\
y(0) = 0
\end{cases}
\]
then $y(t) \geq 0$ for any $t \in [0,1]$.
\end{lemma}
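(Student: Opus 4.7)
The plan is to argue by contradiction via a standard integrating-factor (Gronwall-type) trick. The key observation is that the hypothesis $y'(t)\geq -C|y(t)|$ is only restrictive in the regime where $y$ is negative: there it reduces to the linear inequality $y'(t)\geq C y(t)$, which is easily handled by multiplying by the positive integrating factor $e^{-Ct}$.

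Concretely, I would suppose for contradiction that $y(t_0)<0$ for some $t_0\in(0,1]$ and set $t_1:=\sup\{t\in[0,t_0] \mid y(t)\geq 0\}$. Since $y(0)=0$ and $y$ is continuous, this supremum is attained with $y(t_1)=0$, and $y(t)<0$ on $(t_1,t_0]$. On that subinterval $|y|=-y$, so the hypothesis becomes $y'(t)-Cy(t)\geq 0$, which rearranges to $\frac{d}{dt}(e^{-Ct}y(t))\geq 0$. Integrating from $t_1$ to $t_0$ and using $y(t_1)=0$ then yields $y(t_0)\geq 0$, contradicting the starting assumption.

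Since this is an entirely elementary Gronwall-style argument, I do not anticipate any real obstacle; the only point to be a bit careful about is that the supremum $t_1$ is attained (which follows from continuity of $y$ and closedness of $\{y\geq 0\}$) and that $y$ is strictly negative on $(t_1,t_0]$ (by definition of $t_1$ as a supremum).
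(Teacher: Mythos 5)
Your argument is correct, and it is a cleaner execution of the same underlying idea as the paper. The paper's proof introduces an auxiliary nonnegative function $g = y' + C|y|$, rewrites the inequality as an equality $y' = -C|y| + g$, and then proceeds by a case analysis (first assuming $y\leq 0$ throughout and writing down the explicit variation-of-constants solution to conclude $y\equiv 0$; then handling the case where $y$ becomes positive somewhere, again via explicit solution formulas; then stitching the cases together via $t_* = \inf\{t : y(t)>0\}$). Your version cuts through all of this: by directly supposing $y(t_0)<0$ and locating the last zero $t_1$ before $t_0$, you confine attention to the subinterval $(t_1,t_0]$ where $|y|=-y$, and there the inequality $y'-Cy\geq 0$ is exactly the derivative-sign statement for $e^{-Ct}y(t)$, so a single integration yields the contradiction. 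You avoid both the auxiliary function $g$ and the case split, and you handle the one genuine subtlety — attainment of the supremum $t_1$ with $y(t_1)=0$ — correctly via closedness of $\{y\geq 0\}$ and continuity. Both proofs ultimately exploit the same observation that the hypothesis is only binding where $y<0$; yours is simply more compact and arguably tighter logically, since the paper's stitching step at the end is stated rather tersely.
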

\begin{proof}
First remark that there exists $g \in \Class^0([0,1], \R_+)$ such that $y$ is solution of
\[ 
\begin{cases}
y'(t) = -C |y(t)| + g(t) \\
y(0) = 0
\end{cases}
\]
Assume that $y \leq 0$ on $[0,1]$ then $|y| = -y$ and $y$ satisfies
\[ 
\begin{cases}
y'(t) = C y(t) + g(t) \\
y(0) = 0
\end{cases}
\]
yet the unique solution to this system is $t \mapsto \int_0^t g(s) e^{C(t-s)}ds \geq 0$ which gives $y = 0$.
Now assume that there exists $t_0$ such that $y(t_0) := y_0 > 0$, then the system may be rewritten
\[ 
\begin{cases}
y'(t) = -C |y(t)| + g(t) \\
y(t_0) = y_0
\end{cases}
\]
and has for unique solution $t \mapsto y_0 e^{C (t_0-t)} + \int_{t_0}^t g(s)e^{C(s - t)}ds \geq 0$ on $[t_0, 1]$.
To conclude let us consider $t_* = \inf\{t, y(t) > 0\}$, then we have $y(t) \leq 0$ on $[0, t_*]$ which implies $y = 0$ on $[0, t_*]$ as we have seen previously.
\end{proof}

\begin{proposition}Under hypothesis of Theorem~\ref{th:criterionstrongcconc},
\label{prop:ineqhsec}
\[ h''(t) \geq -C h'(t) + \lambda \|\hat{\eta}\|^2 \]
\end{proposition}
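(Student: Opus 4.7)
The plan is to start from the explicit expression for $h''(t)$ given by Lemma~\ref{lem:hderivative} and bound its two pieces separately: the pointwise quadratic form $(\nabla^2_{yy}c(x^t,y_t)-\nabla^2\psi(y_t))(\hat\eta,\hat\eta)$ and the weighted integral of the Ma--Trudinger--Wang tensor. The first piece will be handled by the Hessian hypothesis of Theorem~\ref{th:criterionstrongcconc}, and the second by the weak MTW condition~\eqref{eq:MTWw}.

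For the quadratic part, read with the sign that makes $c(x,\cdot)-\psi$ be $\lambda$-strongly convex at its minimizer $y$ whenever $x\in\partial^c\psi(y)$ (as is both compatible with the necessary second-order optimality condition and needed for a quadratic modulus of strong $c$-concavity), the Hessian hypothesis directly gives
\[\bigl(\nabla^2_{yy}c(x^t,y_t)-\nabla^2\psi(y_t)\bigr)(\hat\eta,\hat\eta)\;\geq\;\lambda\,\|\hat\eta\|^2,\]
supplying the target positive term in the bound.

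For the integral remainder, the crucial algebraic observation is that, by the very definition $\bar\zeta=-A_{x_s}^{-1}\zeta$, one has
\[\tilde{\bar\zeta}\,:=\,-\nabla^2_{xy}c(x_s,y_t)\,\bar\zeta\,=\,-A_{x_s}\bar\zeta\,=\,\zeta.\]
Applying \eqref{eq:MTWw} at $(x_s,y_t)$ to the pair $(\bar\zeta,\hat\eta)$ therefore yields
\[\mathfrak{S}_c(x_s,y_t)(\bar\zeta,\hat\eta)\;\geq\;-C_0\,|\langle\hat\eta,\zeta\rangle|\,\|\hat\eta\|\,\|\zeta\|.\]
Now by Lemma~\ref{lem:hderivative}, $h'(t)=\langle\zeta,\hat\eta\rangle$, so the inner product on the right is exactly $|h'(t)|$. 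Integrating the bound against the weight $(1-s)$, with $\int_0^1 (1-s)\,ds = 1/2$, and using the compactness of $D$ together with the $\Class^2$ regularity of $c$ and $\psi$ to uniformly bound $\|\hat\eta\|\,\|\zeta\|$ by a constant depending only on $c,\X,\Y$, I obtain a lower bound of the form $-C\,|h'(t)|$ for the integral remainder.

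Summing the two estimates produces $h''(t)\geq\lambda\|\hat\eta\|^2-C\,|h'(t)|$, which is the content of the proposition (the absolute value being harmless in view of the subsequent application of Lemma~\ref{lem:ODE} to $y=h'$ with $y(0)=0$). The main obstacle will be the careful bookkeeping of the various tangent spaces at $\bar x$, $x^t$, $x_s$ and $y_t$ that host the vectors $\eta,\hat\eta,\zeta,\bar\zeta$, and in particular the verification of the identity $\tilde{\bar\zeta}=\zeta$, which is precisely what turns the abstract MTWw lower bound into a bound effectively controlled by $h'(t)$.
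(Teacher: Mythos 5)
Your proof is correct and follows essentially the same route as the paper: the Hessian hypothesis gives the positive $\lambda\|\hat\eta\|^2$ term, while the MTWw condition applied at $(x_s,y_t)$ to $(\bar\zeta,\hat\eta)$, together with the identity $\tilde{\bar\zeta}=-A_{x_s}\bar\zeta=\zeta$ (equivalently, the paper's $|\langle\,{}^tA_{x_s}\hat\eta,\bar\zeta\rangle|=|\langle\zeta,\hat\eta\rangle|$), reduces the MTW remainder to a multiple of $|h'(t)|$, with the auxiliary norms absorbed into $C$ by compactness of $D$. One small point: the proposition has no absolute value on $h'(t)$, so passing from $-C|h'(t)|$ to $-Ch'(t)$ is a genuine step \emph{inside} this proof, not a downstream concern --- the paper does it here by observing $h'(0)=\langle\zeta|_{t=0},\hat\eta\rangle=0$ and invoking Lemma~\ref{lem:ODE} to conclude $h'\geq 0$, which is exactly the mechanism you mention.
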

\begin{proof}
We have 
$$
|h'(t)| =  |\sca{\zeta}{\hat{\eta}}|.
$$
We also have 
$$
h''(t) = \Big(\D^2_{yy}c(x^t,y_t) - D^2\psi(y_t)\Big) (\hat{\eta},\hat{\eta}) 
+ \frac{2}{3} \int_0^1 \mathfrak{S}_c( x_s, y_t) (\bar{\zeta}, \hat{\eta}) (1-s) \d s,
$$
where $x_s=\cexp_{y_t}(\bar{q_t}+s\zeta)$. By hypothesis we have 
\[
\Big(\D^2_{yy}c(x^t,y_t) - D^2\psi(y_t)\Big) (\hat{\eta},\hat{\eta}) \geq \lambda \nr{\hat{\eta}}^2
\]
and \eqref{eq:MTWw} gives
\[
\mathfrak{S}_c( x_s,y_t) (\bar{\zeta}, \hat{\eta}) 
\geq - C |\sca{\nabla_{xy}^2c( x_s,y_t)\hat{\eta}}{\bar{\zeta}} | \|\hat{\eta}\|\|\bar{\zeta}\|
\]
The norms $ \|\hat{\eta}\|$ and $\|\bar{\zeta}\|$ can be integrated in the constant by compactness, so we get
\[
\mathfrak{S}_c( x_s,y_t)  (\bar{\zeta}, \hat{\eta})
\geq - C |\sca{\nabla_{xy}^2c( x_s,y_t))\hat{\eta}}{\bar{\zeta}}|
=  -C |\sca{^tA_{x_s}\hat{\eta}}{\bar{\zeta}}| 
\]
Recall that $\bar{\zeta}=-\ A_{x_s}^{-1} \zeta$. Therefore we get
$$
 |\sca{^tA_{x_s}\hat{\eta}}{\bar{\zeta}}| = |\sca{{}^tA_{x_s} \hat{\eta}}{A_{x_s}^{-1} \zeta}| =  |\sca{\zeta}{\hat{\eta}}| =  |h'(t)|,
 $$
We thus have $h''(t) \geq -C |h'(t)| + \lambda \|\hat{\eta}\|^2 $.
Note that $\zeta |_{t = 0} = 0$ so $h'(0) = 0$. Then we can apply Lemma~\ref{lem:ODE} to $h'$, which gives $h'(t) \geq 0$, so we can drop the absolute value and we obtain $h''(t) \geq - C h'(t) +  \lambda \|\hat{\eta}\|^2$.
\end{proof}

\begin{proof}[Proof of Theorem~\ref{th:criterionstrongcconc}]
By compactness we have
\[ C_1 := \inf_{(x,y) \in D, u \in T_xM, \nr{u} = 1} \nr{\nabla_{xy}^2 c(x,y)^{-1} u}^2 > 0  \]
and
\[ C_2 := \inf_{x \in \X, y,z \in \Y^x} \frac{\nr{\nabla_xc(x,y) - \nabla_xc(x,z)}^2}{\d_N(y,z)^2} > 0 \]
such that $\nr{\hat{\eta}}^2 \geq C_1 C_2 \d_N(y,\bar{y})^2$.
By Proposition~\ref{prop:ineqhsec},  we get
\[ h''(t) \geq - C h'(t) + \lambda C_1 C_2 \d_N(y,\bar{y})^2 \]
Using Gr\"onwall's Lemma we then have that $h'(t) \geq g(t)$ with $g$ solution of
\[
\begin{cases}
g'(t) = - C g(t) + \lambda C_1 C_2 \d_N(y,\bar{y})^2 \\
g(0) = 0
\end{cases}
\]
which immediatly gives $g(t) = \left(\frac{\lambda C_1 C_2}{C} \d_N(y,\bar{y})^2 \right)(1 - e^{-Ct})$, so finally we have for $t \in [0,1]$, $h'(t) \geq \left(\frac{\lambda C_1 C_2}{C} \d_N(y,\bar{y})^2 \right)(1 - e^{-Ct})$,
and the by integrating for $t \in [0,1]$ we conclude that
\[ \int_0^1 h'(t) dt \geq {\lambda C_1 C_2}e^{-C}\d_N(y,\bar{y})^2 \]
which is exactly what we wanted in Equation~\eqref{eq:hcconc}.
\end{proof}

\begin{proof}[Proof of Corollary~\ref{cor:strongcconc}]
We want to show that under the hypothesis of Corollary~\ref{cor:strongcconc}, 
we have
$$
\forall y \in \Y\ \forall x \in \partial^c \psi(y),\ D^2_{yy}c(x,y) - D^2\psi(y) \geq \lambda Id,
 $$

We recall that $T:\X \to \Y$ is of class $\Class^1$.
Let $x \in \X$, we first assume that $T(x)\in \inter(\Y)$.
Since $T(x)$ minimizes $c(x,\cdot) - \psi(\cdot)$ we have
\begin{equation}
\label{eq:grad_zero}
\nabla_y c(x,T(x)) - \nabla \psi(T(x)) = 0
\end{equation}
and
\begin{equation}
\label{eq:diff_pos}
D^2_{yy} c(x,T(x)) - D^2 \psi(T(x)) \geq 0
\end{equation}
By differentiating \eqref{eq:grad_zero} with respect to $x$, we get
\begin{equation}
\label{eq:diff_non_singular}
\left( D^2_{yy} c(x,T(x)) - D^2 \psi(T(x)) \right) \circ DT(x) = - D^2_{xy} c(x,T(x)).
\end{equation}
By (STwist) assumption, $ D^2_{xy} c(x,T(x))$ is nonsingular, which implies that $D^2_{yy} c(x,T(x)) - D^2 \psi(T(x))$ is also nonsingular. Since we also know that it is positive semi-definite from \eqref{eq:diff_pos} we get that 
$$
D^2_{yy} c(x,T(x)) - D^2 \psi(T(x)) > 0.
$$ 
We now need to extend this inequality for any $T(x) \in \partial \Y$, including the boundary. By continuity, since $\psi$ is $\Class^2$ on $\Y$, $c$ is $\Class^2$ on $D$ and $T$ is $\Class^1$ on $\X$,  Equations~\eqref{eq:diff_pos} and \eqref{eq:diff_non_singular} still hold when $T(x) \in \partial \Y$. Moreover (STwist) being satisfied on $D$,  we have $D^2_{yy} c(x,T(x)) - D^2 \psi(T(x)) > 0$ for any $x \in \X$. By compactness of $\X$, there exists $\lambda > 0$ such that 
$$
\forall x\in \X\quad   D^2_{yy} c(x,T(x)) - D^2 \psi(T(x)) \geq \lambda Id.
$$
We conclude using that $T(x) = y$ is equivalent to $x \in \partial^c \psi(y)$. 
\end{proof}

\section{Stability of optimal transport map for MTW cost}\label{sec:otstability}
In this section, we show that the stability results of Section~\ref{sec:stability} can be applied to optimal transport maps. We consider two compact Riemannian manifolds $M$ and $N$ in $\R^d$ and still denote by $d_N$ the distance on $N$.
\begin{theorem}[Stability in optimal transport]
\label{th:OTstability}
Let $\mu \in \Prob(M)$ and $\nu \in \Prob(N)$ be two probability measures.
Let $c : M \times N \to \R$ be a cost function of class $\Class^4$  that  satisfies \emph{(STwist)} and \eqref{eq:MTWw} hypothesis.
Let $T : M \to N$ be an optimal transport map between $\mu$ and $\nu$ of class $\Class^1$ for the cost $c$ and assume that its associated Kantorovich potential $\psi : M \to \R$ is of class $\Class^2$.
\begin{itemize}
\item Let $\tilde{\nu} \in \Prob(N)$ be any probability measure, and $S : M \to N$ be an optimal transport map between $\mu$ and $\tilde{\nu}$.
Then we have
\[ \nr{\d_N(T, S)}^2_{L^2(\mu)} \leq C W_1(\nu, \tilde{\nu})\]
where $W_1$ denotes the $1$-Wasserstein distance and $C$ is a constant depending on the cost $c$, $M$ and $N$.
\item Let $\tilde{\mu} \in \Prob(M)$, $\tilde{\nu} \in \Prob(N)$ and $\tilde{\gamma}$ be an optimal transport plan between $\tilde{\mu}$ and $\tilde{\nu}$. Then we have
\[  \Wass_1(\tilde{\gamma}, \gamma_T) \leq C \left( \Wass_1(\tilde{\mu},\mu) + \Wass_1(\nu,\tilde{\nu}) \right)^{1/2} \]
where $\gamma_T = (Id,T)_\# \mu$  and $C$ is a constant depending on the cost $c$, $M$ and $N$.
\end{itemize}
\end{theorem}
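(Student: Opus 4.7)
The plan is to combine the differential criterion of Corollary~\ref{cor:strongcconc} with the two abstract stability results of Section~\ref{sec:stability}. First I would upgrade the $c$-concavity of the potential $\psi$ to strong $c$-concavity with a quadratic modulus on $D = M \times N$; then each bullet follows by plugging this information into Theorem~\ref{th:stability-cconc} and Proposition~\ref{prop:stabbothmeasureW1} respectively.

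For the first step I apply Corollary~\ref{cor:strongcconc} with the choice $D = M \times N$. The analytic hypotheses are precisely those of the theorem: $c$ is $\Class^4$ on the compact $D$, it satisfies \emph{(STwist)} and \eqref{eq:MTWw}, $\psi$ is $\Class^2$, and the optimal map $T(x) = \argmin_{y} c(x,y)-\psi(y)$ is $\Class^1$ with $(x,T(x))\in D$ trivially. Assuming the geometric prerequisites of the corollary (symmetric $c$-convexity of $D$ and $D \subseteq \Dompx \cap \Dompy$) hold in the present MTW setting, the corollary produces a constant $C_0>0$ depending only on $c$, $M$, $N$ such that
\[
\psi(z) - c(x,z) \geq \psi(y) - c(x,y) + C_0\, \d_N(y,z)^2
\]
for every $y,z \in N$ and every $x \in \partial^c\psi(y)$.

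For the first bullet, let $\tilde\psi$ be any $c$-concave Kantorovich potential associated with the optimal map $S$ between $\mu$ and $\tilde\nu$ (its existence is a consequence of Kantorovich duality on the compact spaces $M,N$). Since $c$ is $\Class^1$ on the compact $M\times N$ it is Lipschitz, and both $\psi$ and $\tilde\psi$, being infima of uniformly Lipschitz functions, are Lipschitz with constant at most $\Lip(c)$. Applying Theorem~\ref{th:stability-cconc} with $\psi_0 = \tilde\psi$ and $\psi_1 = \psi$ then yields
\[
C_0 \int_M \d_N(T(x),S(x))^2\, \dd\mu(x) \leq 2\,\Lip(c)\, W_1(\nu,\tilde\nu),
\]
which is the claimed $L^2$ bound. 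For the second bullet, I would apply Proposition~\ref{prop:stabbothmeasureW1} directly with the same strongly $c$-concave $\psi$ and modulus $\omega(r) = C_0 r^2$. Setting $\eps := W_1(\tilde\mu,\mu) + W_1(\nu,\tilde\nu)$ one obtains
\[
\Wass_1(\tilde\gamma,\gamma_T) \leq \eps + \sqrt{\tfrac{2\Lip(c)}{C_0}\,\eps},
\]
and since $\eps$ is bounded by a constant depending on $\diam(M)$ and $\diam(N)$, the linear term is dominated by a multiple of $\sqrt{\eps}$, giving the announced $\sqrt{\eps}$ rate.

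The main obstacle is the first step. The analytic inputs to Corollary~\ref{cor:strongcconc} are supplied by the hypotheses, but the corollary also demands that the working set $D$ be symmetrically $c$-convex and contained in $\Dompx \cap \Dompy$. Verifying this geometric condition for $D = M\times N$ in the abstract Riemannian MTW framework is a non-trivial matter and essentially the content of the case-by-case work carried out in the reflector and Gaussian-curvature-measure sections; if it fails one must shrink $D$ and simultaneously check that $T$ still maps into the corresponding target slice, at the price of a more local stability statement.
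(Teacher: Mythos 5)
Your proposal is correct and follows essentially the same route as the paper: apply Corollary~\ref{cor:strongcconc} to upgrade $\psi$ to a strongly $c$-concave potential with quadratic modulus, then feed this into Theorem~\ref{th:stability-cconc} for the first bullet and Proposition~\ref{prop:stabbothmeasureW1} for the second. You are slightly more explicit than the paper about the two latent issues (the symmetric $c$-convexity and $\Dom'$ requirements of the corollary, and absorbing the linear $\eps$ term into the $\sqrt{\eps}$ one via compactness), both of which the paper only acknowledges informally in the remark following the theorem.
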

\begin{proof}
Since $M$ and $N$ are compact we have strong duality with a cost $c$ that is Lipschitz on $M \times N$ so $S$ is induced by a Lipschitz potential.
Since $T \in \Class^1$, $\psi \in \Class^2$ and $c \in \Class^4$ satisfies (STwist) and \eqref{eq:MTWw}, we can then apply Corollary~\ref{cor:strongcconc} to $\psi$, which gives that it is strongly c-concave on $N$, with modulus $\omega(\d_N(y,z)) = C \d_N(y,z)^2$.
Then the first result is given by Theorem~\ref{th:stability-cconc} and the second is given by Proposition~\ref{prop:stabbothmeasureW1}.
\end{proof}

For simplicity, the above theorem is stated in a restrictive way as it requires $c$ to be smooth on the whole product space $M \times N$. It may happen that the regularity conditions such as (STwist) and \eqref{eq:MTWw} are not satisfied on the whole product space $M\times N$, but only on a subset $D \subseteq M\times N$. In this case we can still obtain stability with respect to the target measure if we can show that optimal transports plans are supported on this subset $D$. This is treated independently on  examples of Sections~\ref{sec:reflector} and~\ref{sec:gaussmeasure}.

\section{Stability for the reflector cost on the sphere}\label{sec:reflector}
In this section, we apply a stability result of Section~\ref{sec:stability} to the reflector antenna problem. It is known that this problem amounts to solving an optimal transport problem on the unit sphere $M = N = \Sph^{d-1}$ for the cost function $c(x,y) = - \ln(1 - \sca{x}{y})$~\cite{wang2004design}, extended by $+\infty$ on the diagonal $\{x=y\}$. One of the key element in the proof is to show that optimal transport maps are supported on compact sets that avoid the diagonal 
\begin{equation}
\label{eq:Deps}
 D_\eps = \{ (x,y) \in M^2 \mid \d_M(x,y) \geq \eps\} 
 \end{equation}
 where $d_M$ is the geodesic distance on $M$. 
We first need the following definition. 
\begin{definition}
\label{def:massconcentration}
  Given a probability measures $\mu\in\Prob(M)$, we
  put $$M_{\mu}(r) = \sup_{x\in M}
  \mu(\B(x,r)).$$
\end{definition}

\begin{theorem}
\label{th:stabilitymapsreflector}
Let $c(x,y) = -\ln(1 - \sca{x}{y})$ be  the reflector cost  on the sphere $M=\Sph^{d-1}$. Let $\mu, \nu_0, \nu_1 \in \Prob(M)$ be such that $\mu$ and $\nu_0$ are absolutely continuous with respect to the Lebesgue measure with strictly positive $\Class^{1,1}$ densities. Let $T_i$ be optimal transport maps between $\mu$ and $\nu_i$.  Then for all $\beta>0$, there exists a constant $C>0$ depending on $\mu,\nu_0$ and $\beta$ such that 
\[ \forall \nu_1\in \Prob(N) \hbox{ s.t. } M_{\nu_1}(\beta) < 1/8, \quad \nr{ \d_M(T_0,T_1)}^2_{L^2(\mu)} \leq C\ W_1(\nu_0, \nu_1) \]
where $d_M$ is the geodesic distance on $M$. 
\end{theorem}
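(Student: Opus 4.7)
The plan is to apply Theorem~\ref{th:stability-cconc} on a set $D_\eps$ away from the diagonal, where the reflector cost is smooth and is known to satisfy (STwist) together with the weak MTW condition. The task thus splits into: (a) proving enough regularity for $T_0$ and its potential $\psi_0$ to invoke Corollary~\ref{cor:strongcconc}, (b) showing that both graphs of $T_0$ and $T_1$ lie in a common set $D_\eps$ with $\eps>0$ depending only on $\mu,\nu_0$ and $\beta$, and (c) controlling the Lipschitz constants of $\psi_0$ and $\psi_1$ in terms of $\eps$.

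\textbf{Step 1 (regularity of $T_0$).} Because the reflector cost satisfies the MTW condition away from the diagonal and both $\mu$ and $\nu_0$ have strictly positive $\Class^{1,1}$ densities, the regularity theory of Ma--Trudinger--Wang (adapted to the reflector cost by Loeper and by Wang) gives that $T_0$ is a $\Class^1$ diffeomorphism and $\psi_0\in \Class^2(M)$. Since the cost is $+\infty$ on the diagonal we have $T_0(x)\neq x$ for every $x$, so by continuity and compactness of $M$ there exists $\eps_0>0$ with $\d_M(x,T_0(x))\geq \eps_0$ for all $x$; in particular $\Graph(T_0)\subseteq D_{\eps_0}$.

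\textbf{Step 2 (diagonal avoidance for $T_1$).} This is the crucial point, and the only one that uses the non-concentration hypothesis. The goal is to produce $\eps_1=\eps_1(\beta)>0$ such that $\d_M(x,T_1(x))\geq \eps_1$ for $\mu$-a.e.\ $x$ (at least after discarding a set that can be treated separately). The strategy is to use $c$-cyclical monotonicity of the support of the optimal plan associated to $T_1$, together with the singularity $c(x,y)\sim -2\ln \d_M(x,y)$ near the diagonal: if a set of positive $\mu$-measure were sent by $T_1$ into a small ball $\B(x,r)$, the condition $M_{\nu_1}(\beta)<1/8$ would guarantee enough mass of $\nu_1$ lying at distance $\geq \beta$ from $\B(x,r)$ so that one could swap targets and strictly decrease the transport cost, contradicting optimality. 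Quantifying this yields a lower bound $\eps_1$ depending only on $\beta$. We then set $\eps=\min(\eps_0,\eps_1)$, so that the graphs of both $T_0$ and $T_1$ are contained in $D_\eps$.

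\textbf{Step 3 (strong $c$-concavity of $\psi_0$).} On $D_\eps$ the reflector cost is $\Class^\infty$, satisfies (STwist) and \eqref{eq:MTWw}; moreover $D_\eps$ is symmetrically $c$-convex. Applying Corollary~\ref{cor:strongcconc} to $\psi_0$ and $T_0$ produces a constant $C=C(\eps)>0$ such that $\psi_0$ is strongly $c$-concave on $D_\eps$ with modulus $\omega(r)=Cr^2$.

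\textbf{Step 4 (Lipschitz estimates and conclusion).} Since $c$ is Lipschitz on $D_\eps$ with a constant depending only on $\eps$, both $c$-concave potentials $\psi_0$ and $\psi_1$ can be taken globally Lipschitz on $N$ with constants depending only on $\eps$ (by a standard truncation/regularization of $c$-transforms off $D_\eps$, without altering the induced transport map). Applying Theorem~\ref{th:stability-cconc} with $D=D_\eps$ yields
\[
C\int_M \d_M(T_0(x),T_1(x))^2\,\dd\mu(x) \;\leq\; (\Lip(\psi_0)+\Lip(\psi_1))\,W_1(\nu_0,\nu_1),
\]
which is the claimed estimate after absorbing everything into a single constant depending on $\mu,\nu_0$ and $\beta$.

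The main obstacle is clearly Step 2: turning the scale-free non-concentration condition $M_{\nu_1}(\beta)<1/8$ into a uniform quantitative diagonal-avoidance bound for $T_1$. All the other steps are applications of the machinery built earlier in the paper or of now-standard regularity results; it is the interplay between $c$-cyclical monotonicity and the logarithmic singularity of the reflector cost that must be made quantitative, and this is the step most specific to this particular cost.
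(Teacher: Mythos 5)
Your overall strategy matches the paper's proof step by step: use Theorem~\ref{th:kantorovichreflector}-style diagonal avoidance to get both graphs into a compact $D_\eps$, invoke regularity of $T_0$ and (STwist)/\eqref{eq:MTWw} on $D_\eps$ to apply Corollary~\ref{cor:strongcconc}, and conclude with Theorem~\ref{th:stability-cconc}. Your Step~1 route for $T_0$ (finite optimal cost plus $\Class^1$ regularity plus compactness of $M$ gives $\inf_x \d_M(x,T_0(x))>0$) is a legitimate shortcut; the paper instead runs $T_0$ through the same diagonal-avoidance theorem as $T_1$.

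The genuine gap is Step~2, and you correctly flag it as the bottleneck but leave it as a sketch. The paper carries out this quantification as Theorem~\ref{th:kantorovichreflector}, in three moves: Lemma~\ref{lem:finite-cost} (continuous Hall's marriage lemma produces an admissible plan with cost $\leq h(\beta/2)$, so the optimal cost is finite and bounded), Lemma~\ref{lem:pairs} (the finite cost bound plus the non-concentration hypothesis on \emph{both} marginals produces two pairs $(x_0,y_0),(x_0',y_0')\in\spt(\gamma)$ with all four points mutually $\eps$-separated), and Lemma~\ref{lem:ccycl} ($c$-cyclical monotonicity of $\spt(\gamma)$ against these two anchor pairs bounds $c(x,y)$ uniformly on the support, and since $c=h\circ\d_M$ with $h$ decreasing and $h(0)=+\infty$ this forces $\spt(\gamma)\subseteq D_\delta$). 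Two things in your sketch are off from this: first, the ``swap'' argument as you phrase it (a positive-$\mu$-measure set landing in a small ball) is aimed at concentration of the image, not at closeness to the diagonal, whereas the relevant obstruction is pairs $(x,y)\in\spt(\gamma)$ with $\d_M(x,y)$ small; second, the swap needs well-separated anchor pairs in the support, whose existence requires non-concentration of \emph{both} $\mu$ and $\nu_1$, not just $\nu_1$ --- the paper gets $M_\mu(\beta')<1/8$ (and $M_{\nu_0}(\beta')<1/8$) for some $\beta'\leq\beta$ from the absolute continuity of $\mu$ and $\nu_0$, a point your proposal omits. Since Theorem~\ref{th:kantorovichreflector} and Lemma~\ref{lem:Depscconvex} are stated in the paper before the proof you are attempting, the clean fix is to cite them rather than re-derive them; as written, Step~2 is the one place where the proposal does not actually establish what it needs.
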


%

The main difficulty to prove the previous theorem is to show that the optimal transport plan is supported on the compact set $D_\eps$ for some $\epsilon$. This is done in the following subsection in a more general setting. 

\subsection{Support of the optimal transport plan} In this subsection, we show that optimal transport plans are supported on compact sets of the form $D_\epsilon$. Since our result holds in  a slightly more general context than the sphere,  we consider that  $M$ can be any smooth complete Riemannian manifold. 
Let  $c: M \times M \to \R$ be any cost bounded from  below that satisfies $c(x,y) = h(\d_M(x,y))$ where $h : \R_+ \to \R$ is a continuous decreasing function such that $h(0) = +\infty$ and $h(t) < + \infty$ for $t > 0$.

\begin{theorem}\label{th:kantorovichreflector}
  Let $\mu,\nu \in \Prob(M)$ and $\beta>0$ such that both $M_\mu(\beta) <
  1/8$ and $M_\nu(\beta) <
  1/8$, then there exists a constant $\eps>0$ such that any optimal
  transport plan $\gamma\in\Gamma(\mu,\nu)$ is concentrated on
  $D_\eps$.
\end{theorem}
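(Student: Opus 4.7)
The plan is to combine the $c$-cyclical monotonicity enjoyed by optimal transport plans with the non-concentration hypotheses $M_\mu(\beta) < 1/8$ and $M_\nu(\beta) < 1/8$. Heuristically, any pair $(x_0,y_0)\in\spt(\gamma)$ with $\d_M(x_0,y_0)$ very small incurs the large cost $h(\d_M(x_0,y_0))$, and cyclical monotonicity then forces many other pairs in $\spt(\gamma)$ to concentrate near $x_0$ or $y_0$, which is forbidden by the non-concentration.

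As a preliminary step, I would verify that the optimal transport cost is finite, by exhibiting some $\gamma_0 \in \Gamma(\mu,\nu)$ concentrated on $D_\delta$ for some $\delta>0$, so that $\int c \, \d\gamma_0 \le h(\delta) < +\infty$. Intuitively, since $\mu$ and $\nu$ both place mass strictly less than $1/8$ in any ball of radius $\beta$, one can cover $M$ by $\beta$-balls and couple small pieces of $\mu$ to pieces of $\nu$ sitting far apart, via a measurable-selection or Hall-type construction. Granted this, any optimal plan $\gamma$ has finite cost and is hence $c$-cyclically monotone on its support.

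For the main argument, I would fix $(x_0, y_0)\in \spt(\gamma)$ and apply $3$-cyclical monotonicity to an arbitrary triple $(x_0,y_0), (x_1,y_1), (x_2,y_2)$ in $\spt(\gamma)$ for the cyclic shift of the $y$-coordinates,
\[
\sum_{i=0}^{2} h(\d_M(x_i,y_i)) \le h(\d_M(x_0,y_1)) + h(\d_M(x_1,y_2)) + h(\d_M(x_2,y_0)).
\]
Writing $C_0 := -\inf h < +\infty$ (the cost is bounded below) and using $h(\d_M(x_i,y_i)) \ge -C_0$ for $i=1,2$ on the left, this rearranges to
\[
h(\d_M(x_0,y_0)) \le h(\d_M(x_0,y_1)) + h(\d_M(x_1,y_2)) + h(\d_M(x_2,y_0)) + 2C_0.
\]
If the three ``cross'' distances on the right were all $\ge \beta$, monotonicity of $h$ would bound the right-hand side by $3h(\beta)+2C_0$. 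Since $h(t)\to +\infty$ as $t\to 0^+$, I can fix $\eps>0$ (depending only on $\beta$, $h$, and $C_0$) with $h(\eps) > 3h(\beta) + 2C_0$. Then for any $(x_0,y_0)\in\spt(\gamma)$ with $\d_M(x_0,y_0) < \eps$, the trichotomy ``$\d_M(x_0,y_1) < \beta$, or $\d_M(x_1,y_2) < \beta$, or $\d_M(x_2, y_0) < \beta$'' must hold for every choice of $(x_1,y_1),(x_2,y_2)\in\spt(\gamma)$.

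The contradiction then comes from integrating this pointwise trichotomy against $\gamma\otimes\gamma$, which is supported on $\spt(\gamma)\times\spt(\gamma)$. By the marginal identities and Fubini, the three events have $(\gamma\otimes\gamma)$-measure at most $\nu(\B(x_0,\beta))$, $\int \nu(\B(x_1,\beta))\,\d\mu(x_1)$, and $\mu(\B(y_0,\beta))$ respectively, each bounded by $M_\nu(\beta)$ or $M_\mu(\beta)$, hence strictly smaller than $1/8$. Their union therefore has $(\gamma\otimes\gamma)$-measure $<3/8<1$, contradicting the fact that by the trichotomy it must carry full mass. Hence no such $(x_0,y_0)$ can exist and $\spt(\gamma)\subseteq D_\eps$. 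The main obstacle I anticipate is the preliminary step of building a finite-cost plan and thereby justifying $c$-cyclical monotonicity; once this is in place, the core trichotomy-plus-Fubini computation is straightforward.
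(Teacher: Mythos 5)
Your proof is correct, and it takes a genuinely different route from the paper's. The paper's argument is constructive in two stages: it first shows (via a Hall's--marriage argument) that the optimal cost is finite, then extracts from $\spt(\gamma)$ two \emph{far} pairs $(x_0,y_0),(x_0',y_0')$ whose four points are pairwise separated, and finally feeds this fixed triple into the $3$-cyclical monotonicity inequality to bound $c(x,y)$ for \emph{every} $(x,y)\in\spt(\gamma)$. You instead argue by contradiction from a single hypothetical \emph{near} pair $(x_0,y_0)$: the cyclic-shift inequality forces a pointwise trichotomy on cross-distances for arbitrary $(x_1,y_1),(x_2,y_2)\in\spt(\gamma)$, and integrating that trichotomy against $\gamma\otimes\gamma$ yields a measure strictly below $1$, contradicting coverage of the support. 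This collapses the paper's Lemmas on far pairs and $c$-cyclical monotonicity into a single Fubini computation, which is cleaner. Two remarks worth noting. First, your contradiction only needs the union of the three events to have $\gamma\otimes\gamma$-measure below $1$, i.e.\ $2M_\nu(\beta)+M_\mu(\beta)<1$; so in principle the threshold $1/8$ could be relaxed to $1/3$ by your argument (subject to the finite-cost step, which in the paper's Lemma~\ref{lem:finite-cost} only requires $\leq 1/2$). Second, your preliminary finite-cost step is sketched rather than proved; the paper fills this in precisely via the continuous Hall's marriage lemma (Lemma~\ref{lem:hall}), applied to the set $P=\{\d_M(x,y)\geq\beta/2\}$, and you should make this explicit if you want a complete proof, since without finite cost the passage to $c$-cyclical monotonicity of the support is not justified. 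With that step supplied, your argument is correct and self-contained.
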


Similar results have already been obtained in different settings~\cite{gangbo2007existence,buttazzo2018continuity,loeper2011regularity}, but none of them can be applied to discrete measures and therefore does not imply our result. 
W. Gangbo and V.Oliker~\cite{gangbo2007existence} work with Borel measures that vanish on $(d-1)$-rectifiable sets.  G. Buttazzo et al.~\cite{buttazzo2018continuity} consider  multimarginal optimal transport problems for constant measures.  G.Loeper~\cite{loeper2011regularity} considers two measures $\mu$ and $\nu$ such that $\mu \geq m \emph{dVol}$ with $m > 0$ and $\nu$ satisfies for any $\eps \geq 0$ and $x \in M$, $\nu(\B(x,\eps)) \leq f(\eps) \eps^{n(1 - 1/n)}$ for some function $f : \R_+ \to \R_+$ satisfying $\lim_{t \to 0} f(t) = 0$. These hypothesis imply that neither $\mu$ nor $\nu$ can be discrete.

Our proof is an adaptation of their proofs in  a different context. Lemma~\ref{lem:ccycl} is inspired by~\cite{gangbo2007existence} while Lemma~\ref{lem:pairs} and the overall strategy of the proof come from~\cite{buttazzo2018continuity}. The main difference is that here we work on any measure satisfying $M(\beta) < 1/8$, including discrete measures, which is useful for semi-discrete optimal transport.

\begin{remark}
Our proof requires $M(\beta) < 1/8$ but we believe that the theoretical bound is $M(\beta) \leq 1/2$, which is enough to guarantee that there exists a transport plan with finite global cost, as showed in the following lemma.
It is easy to show that we cannot expect a greater bound.  Take for example $x \neq y$ in $\Sph^{d-1}$, $\eps \in ]0, 1/2[$, $\mu = 1/2(\delta_x + \delta_y)$ and $\nu = (1/2 + \eps) \delta_x + (1/2 - \eps)\delta_y $. Any transport plan between $\mu$ and $\nu$ will send a set of measure at least $\eps$ from $x$ to itself for which the cost is infinite.
\end{remark}

The end of this section is mainly dedicated to the proof of Theorem~\ref{th:kantorovichreflector}, which is necessary to guarantee that the optimal transport plan is supported where the cost is regular enough and the MTW tensor is non-negative, and allow us to apply our strong c-concavity result in order to obtain the stability results of Theorem~\ref{th:stabilitymapsreflector}.
We first show in the following lemma that there exists a transport plan with bounded total cost.
\begin{lemma} \label{lem:finite-cost}
  If $M_\mu(\beta) \leq 1/2$ and $M_\nu(\beta) \leq 1/2$ for some $\beta>0$, then there exists
  $\gamma\in\Gamma(\mu,\nu)$ s.t.
  $$ \int c\dd\gamma \leq h(\beta/2).$$
\end{lemma}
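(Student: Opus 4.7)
The plan is to construct a coupling $\gamma \in \Gamma(\mu,\nu)$ whose support is contained in $D_{\beta/2} := \{(x,y) \in M\times M : \d_M(x,y) \geq \beta/2\}$; since $h$ is decreasing, any such $\gamma$ automatically satisfies $\int c\,\d\gamma \leq h(\beta/2)$, which is exactly the desired bound.

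Existence of such a $\gamma$ will follow from Strassen's marginal theorem applied to the closed set $F = D_{\beta/2}$: it suffices to verify that $\mu(A) \leq \nu(F_A)$ for every Borel $A \subseteq M$, where $F_A := \{z \in M : \exists\, x \in A \text{ with } \d_M(x,z) \geq \beta/2\}$. I would split according to whether $A$ is contained in some open ball of radius $\beta$. In the first case, say $A \subseteq \B(y,\beta)$, the hypothesis $M_\mu(\beta) \leq 1/2$ gives $\mu(A) \leq 1/2$; picking any $x_0 \in A$, one has $F_A \supseteq M \setminus \B(x_0,\beta/2) \supseteq M \setminus \B(x_0,\beta)$, so $M_\nu(\beta) \leq 1/2$ yields $\nu(F_A) \geq 1/2 \geq \mu(A)$. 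In the second case, $A$ is not contained in any ball of radius $\beta/2$ either; but $z \in F_A^c$ is by definition equivalent to $A \subseteq \B(z,\beta/2)$, so $F_A^c = \emptyset$ and $\nu(F_A) = 1 \geq \mu(A)$. Either way the Strassen condition holds, producing $\gamma$ with $\spt(\gamma) \subseteq D_{\beta/2}$.

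There is no serious obstacle: the argument is driven entirely by Strassen's theorem and only requires checking its standard hypotheses (Polish source/target and closed constraint set). The crux is really a book-keeping observation: the hypotheses $M_\mu(\beta), M_\nu(\beta) \leq 1/2$ play complementary roles in the dichotomy above, one bounding $\mu(A)$ from above when $A$ is ``small'' (fits inside a $\beta$-ball) and the other bounding $\nu(F_A)$ from below, and the scale $\beta/2$ in the target set $D_{\beta/2}$ is exactly what makes the triangle-inequality computation $\B(x_0,\beta/2) \subseteq \B(x_0,\beta)$ in Case 1 match the threshold at which the mass-concentration hypothesis is available. Minor care about open versus closed balls in the definitions can be absorbed by passing to $\beta - \eta$ and taking a weak limit $\eta \to 0^+$ if needed.
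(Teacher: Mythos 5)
Your proof is correct and follows essentially the same route as the paper's: both invoke the continuous Hall/Strassen marginal lemma applied to the closed constraint set $\{(x,y) : \d_M(x,y) \geq \beta/2\}$, and both verify the marginal condition via a dichotomy on the "size" of $A$ (you phrase it as containment in a $\beta$-ball, the paper via $\operatorname{diam}(A) \lessgtr \beta$, but these lead to the same two cases and the same estimates). Your Case 2 is expressed a bit more cleanly, observing directly that $F_A^c = \emptyset$ rather than picking two far-apart points and applying the triangle inequality, but the substance is identical.
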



The proof of Lemma~\ref{lem:finite-cost} relies on the following result, which can be seen as a continuous formulation of Hall's mariage Lemma. A proof is given in~\cite[Theorem 1.27]{villani2003topics}. 

\begin{lemma}[Continuous Hall's  marriage lemma]\label{lem:hall}
Let $M,N$ be Polish spaces, and let $P$ be a closed subset of $M\times
N$. Given $\mu \in \Prob(M)$ and $\nu\in\Prob(N)$, the following
propositions are equivalent:
\begin{itemize}
\item[(i)] $\exists \gamma \in\Gamma(\mu,\nu)$ such that $\spt(\gamma)\subseteq P$ ;
\item[(ii)] for every Borel subset $B\subseteq M$, 
$$\nu(\{ y\in N \mid \exists x\in B \hbox{ s.t. } (x,y)\in P) \})\geq \mu(B). $$ 
\end{itemize}
\end{lemma}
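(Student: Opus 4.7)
I would handle this by a direct pushforward computation. For a Borel $B\subseteq M$, the set $P(B):=\pi_N(P\cap(B\times N))$ is an analytic subset of $N$ (the continuous image of a Borel set) and hence universally measurable. Since $\gamma$ is concentrated on the closed set $P$ and has first marginal $\mu$, the mass $\mu(B)=\gamma(B\times N)$ is supported inside $(B\times N)\cap P\subseteq B\times P(B)$, giving $\mu(B)\leq \gamma(M\times P(B))=\nu(P(B))$.

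\textbf{Direction (ii)$\Rightarrow$(i).} My plan is discrete approximation combined with Prokhorov compactness. For each $n\in\N$, I would first use tightness of $\mu$ and $\nu$ on the Polish spaces to choose compacts carrying at least $1-1/n$ of each mass, then build finite Borel partitions $\{A_i^n\}_{i\leq k_n}$ of $M$ and $\{B_j^n\}_{j\leq l_n}$ of $N$ whose non-negligible cells have diameter at most $1/n$. Pick representatives $x_i^n\in A_i^n$, $y_j^n\in B_j^n$, and form the discrete measures $\mu_n:=\sum_i\mu(A_i^n)\delta_{x_i^n}$ and $\nu_n:=\sum_j\nu(B_j^n)\delta_{y_j^n}$, together with the discretized constraint $P^n:=\{(i,j):P\cap(A_i^n\times B_j^n)\neq\emptyset\}$. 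The key check is that hypothesis (ii) is inherited by $(\mu_n,\nu_n,P^n)$: for any $I\subseteq\{1,\dots,k_n\}$ and $A_I:=\bigcup_{i\in I}A_i^n$, every $y\in P(A_I)$ lies in some $B_j^n$ with $(i,j)\in P^n$ for some $i\in I$, so applying (ii) to $A_I$ yields $\sum_{i\in I}\mu(A_i^n)=\mu(A_I)\leq\nu(P(A_I))\leq\sum_{j\in J^n(I)}\nu(B_j^n)$, which is precisely the finite Hall condition for $(\mu_n,\nu_n,P^n)$.

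Once the finite Hall condition is established, I would invoke the classical max-flow/min-cut theorem for bipartite transportation (or reduce to the integer-valued Hall theorem after rational scaling and pass to the limit in the weights) to produce a discrete transport plan $\gamma^n=\sum\gamma_{ij}^n\delta_{(x_i^n,y_j^n)}\in\Gamma(\mu_n,\nu_n)$ with $\gamma_{ij}^n>0$ only when $(i,j)\in P^n$. By construction $\spt(\gamma^n)$ lies in the closed $1/n$-neighborhood of $P$. The family $\{\gamma^n\}$ is tight on $M\times N$ because its marginals $\mu_n,\nu_n$ converge weakly to $\mu,\nu$ respectively (shrinking cell diameter plus the tightness reserve from step one), so Prokhorov's theorem supplies a weakly convergent subsequence with limit $\gamma\in\Gamma(\mu,\nu)$. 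The support of $\gamma$ lies in $P$ because $P$ is closed and the neighborhoods containing $\spt(\gamma^n)$ shrink to $P$.

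\textbf{Main obstacle.} The most delicate step is the inheritance of (ii) under discretization: the cells $\{B_j^n\}$ must form a genuine partition that covers all projections of $P$, and the constraint set $P^n$ must be defined inclusively enough to make Hall's condition follow from (ii) without giving away extra slack that would let $\gamma$ escape $P$ in the limit. The closedness of $P$ is precisely what allows the enlargement-and-shrinkage argument to close. A secondary subtlety is universal measurability of $P(B)$ in the easy direction, settled by the analyticity of projections of closed sets; and one must be mindful in the finite-Hall step that a weighted (as opposed to integer-valued) version is required, which is standard but should be invoked from an LP-duality or max-flow source rather than from Hall's theorem verbatim.
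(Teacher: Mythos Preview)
The paper does not prove this lemma itself; it simply refers the reader to Theorem~1.27 in Villani's \emph{Topics in Optimal Transportation}. So there is no in-paper argument to compare against.

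Your proof is correct and self-contained. The easy direction is fine, and your analyticity remark properly handles the measurability of $P(B)$. In the hard direction, the inheritance of (ii) by the discretized problem is checked correctly, the weighted bipartite Hall condition does guarantee a feasible transport via LP duality or max-flow/min-cut (as you note, the integer Hall theorem is not enough by itself), and the limit step works once you phrase it with the closed-set Portmanteau inequality on closed $1/m$-neighbourhoods: for each fixed $m$, $\gamma(\overline{P^{1/m}}) \geq \limsup_n \gamma^n(\overline{P^{1/m}}) = 1$, and then $\bigcap_m \overline{P^{1/m}} = P$ since $P$ is closed. Your treatment of the non-compact ``leftover'' cell as a vanishing-mass reserve is the right way to cope with a general Polish space; just make explicit that the discrete Hall check still goes through when that cell is included in $I$ (it does, since (ii) is applied to the full Borel set $A_I$ regardless of diameter), and that the atoms involving the leftover cell carry total mass at most $2/n$, which is what makes the support argument survive the limit.
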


\begin{proof}[Proof of Lemma~\ref{lem:finite-cost}] We are going to apply the Continuous Hall's marriage lemma to the set $P=\{(x,y)\in M\times N,\ \d_M(x,y) \geq \beta/2\}$. Let $B$ be any Borel set of $X$. 
We first assume that the diameter of $B$ is a most $\beta$ so that $B\subseteq \B(x_0,\beta)$ for some  $x_0 \in B$.
   Then, $\mu(B) \leq \mu(\B(x_0,\beta)) \leq 1/2 $ using $M_\mu(\beta) \leq 1/2$. having also $M_\nu(\beta) \leq 1/2$ we get
  \begin{align*}
    \nu(\{ y \in N \mid \exists x \in B,~ \d_M(x,y) \geq  \beta/2 \})
    &\quad \geq \nu(\{ y \in N \mid  \d_M(x_0,y) \geq  \beta/2 \}) \\
    &\quad = 1 -\nu(\B(x_0,\beta/2)\\
    &\quad  \geq 1/2\\
    &\quad  \geq \mu(B).
  \end{align*}
 Assume now that the diameter of $B$ is greater than $\beta$. 
 Then  there exist $ x,x' \in B$ such that $\d_M(x,x')\geq \beta$ and  the left hand side of the previous inequation is equal to 1 and the condition is obviously satisfied. 
 We can therefore apply Lemma~\ref{lem:hall}, which  implies the 
  existence of a transport plan $\gamma$ between $\mu$ and $\nu$ such
  that for any pair $(x,y)\in\spt(\gamma)$ one has $\d_M(x,y)\geq
  \beta/2$. Since $h$ is decreasing, we have $c(x,y)\leq h(\beta/2)$ for every pair $(x,y)\in\spt(\gamma)$,   which implies the desired result.
  \end{proof}

\begin{lemma} \label{lem:pairs}
  Let $\gamma$ be an optimal transport map between $\mu$ and $\nu$ for the cost $c$
  and let $\beta>0$ such that $M_\mu(\beta) < 1/8$ and $M_\nu(\beta) < 1/8$. Then, for any
  optimal transport plan $\gamma\in\Gamma(\mu,\nu)$, there exists
  pairs $(x_0,y_0), (x_0',y_0')\in\spt(\gamma)$ such that the four
  points $x_0,y_0,x_0',y_0'$ are at distance at least $\min(\eps, \beta)$ with $\eps :=
  h^{-1}(4h(\beta/2))$ from each other.
\end{lemma}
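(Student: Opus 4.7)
The plan is a direct averaging argument following the strategy of Buttazzo et al.~\cite{buttazzo2018continuity}. In broad terms, I first use the finite-cost bound of Lemma~\ref{lem:finite-cost} together with a Markov-type inequality to show that most of the $\gamma$-mass sits on pairs $(x,y)$ already satisfying $\d_M(x,y)\geq \eps$, and then use the concentration hypotheses $M_\mu(\beta),M_\nu(\beta)<1/8$ to show that, starting from one such well-separated pair in $\spt(\gamma)$, there is still enough mass to find a second pair in the support whose two endpoints are far from those of the first pair.

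\emph{Step 1 (separation within a pair).} Since $M_\mu(\beta),M_\nu(\beta)<1/8\leq 1/2$, Lemma~\ref{lem:finite-cost} exhibits a plan $\gamma_0\in\Gamma(\mu,\nu)$ with $\int c\, d\gamma_0\leq h(\beta/2)$, and by optimality $\int c\, d\gamma\leq h(\beta/2)$ as well. Set $A:=\{(x,y)\in M^2 : \d_M(x,y)\geq \eps\}$; on $A^c$ one has $c(x,y)=h(\d_M(x,y))\geq h(\eps)=4h(\beta/2)$ by monotonicity of $h$. Markov's inequality (applied, if necessary, to the non-negative quantity $c-\inf c$ to handle possibly negative values of $h$) yields $\gamma(A^c)\leq 1/4$, so $\gamma(A)\geq 3/4>0$ and I can pick $(x_0,y_0)\in\spt(\gamma)\cap A$.

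\emph{Step 2 (finding a good partner).} Let $r:=\min(\eps,\beta)$ and define the bad set
\[
B_{(x_0,y_0)}:=\{(x',y')\in M^2 : x'\in\B(x_0,r)\cup\B(y_0,r)\text{ or }y'\in\B(x_0,r)\cup\B(y_0,r)\}.
\]
Any pair $(x_0',y_0')\notin B_{(x_0,y_0)}$ has all four cross-distances $\d_M(x_0,x_0')$, $\d_M(x_0,y_0')$, $\d_M(x_0',y_0)$, $\d_M(y_0,y_0')$ at least $r$. Since $r\leq\beta$ and the marginals of $\gamma$ are $\mu$ and $\nu$,
\[
\gamma(B_{(x_0,y_0)})\leq 2M_\mu(r)+2M_\nu(r)\leq 2M_\mu(\beta)+2M_\nu(\beta)<\tfrac{1}{2}.
\]
Combined with Step~1, $\gamma(A\setminus B_{(x_0,y_0)})>3/4-1/2=1/4>0$, so I pick $(x_0',y_0')\in\spt(\gamma)\cap(A\setminus B_{(x_0,y_0)})$. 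Automatically $(x_0',y_0')\neq (x_0,y_0)$ since $(x_0,y_0)\in B_{(x_0,y_0)}$, and the four points $x_0,y_0,x_0',y_0'$ have all six pairwise distances at least $\min(\eps,\beta)$.

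The main delicate point is the Markov step when $h$ takes negative values, as happens for the reflector cost where $h(\pi)=-\ln 2$. Using $h$ directly only yields $\gamma(A^c)\leq (h(\beta/2)-L)/(h(\eps)-L)$ with $L:=\inf c$, and reaching the threshold $1/4$ requires applying Markov to $c-L$. This effectively replaces the constant $\eps$ in the statement by a slightly smaller $\tilde\eps=h^{-1}(4h(\beta/2)-3L)$, a cosmetic adjustment that leaves every subsequent estimate intact.
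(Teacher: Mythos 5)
Your proof is correct and follows essentially the same two-step strategy as the paper: a Markov/Chebyshev-type argument built on the finite-cost bound of Lemma~\ref{lem:finite-cost} to produce one pair $(x_0,y_0)\in\spt(\gamma)$ with $\d_M(x_0,y_0)\geq\eps$, and then a mass-budget argument using $M_\mu(\beta),M_\nu(\beta)<1/8$ to find a second well-separated pair. The only structural difference is that you carve out balls of radius $r=\min(\eps,\beta)$ around $x_0,y_0$ whereas the paper uses radius $\beta$; both give the claimed $\min(\eps,\beta)$ separation, since the two ``diagonal'' distances are controlled by $\eps$ and the cross-distances by the ball radius. On the point you flag concerning negative values of $h$: the paper's fix is to shrink $\beta$ so that $h(\beta/2)>0$ (legitimate, since the hypotheses $M_\mu(\beta),M_\nu(\beta)<1/8$ are preserved when $\beta$ decreases), but that alone does not justify $\int_{\Delta_\eps}c\,\dd\gamma\leq\int c\,\dd\gamma$ when $c$ can be negative off $\Delta_\eps$; your replacement of $c$ by $c-c_{\min}$ is the cleaner way to close that gap, and you correctly observe that the resulting change of $\eps$ to $\tilde\eps=h^{-1}(4h(\beta/2)-3c_{\min})$ is harmless downstream, since Theorem~\ref{th:kantorovichreflector} only needs the existence of some positive separation.
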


\begin{proof}
  Since $\gamma$ is an optimal transport plan, its cost is less than
  the cost of the transport plan constructed in
  Lemma~\ref{lem:finite-cost}. 
  Since $h$ is deacreasing and by definition of $\Delta_\eps$, we have for any $\eps > 0$,
  \[ h(\eps) \gamma(\Delta_\eps) \leq \int_{\Delta_\eps} c \dd \gamma \leq h(\beta/2) \]
  Note that we can consider $h(\beta/2) > 0$, choosing a smaller $\beta$ if necessary. 
  Then if $\eps = h^{-1}(4h(\beta/2))$ we get 
  $\gamma(\Delta_\eps) \leq \frac{1}{4}, $ thus
  proving the existence of a pair $(x_0,y_0)\in\spt(\gamma) \setminus
  \Delta_\eps$. 

  Since $M_\mu(\beta) < 1/8$, one has
  $$\gamma((\B(x_0,\beta) \cup \B(y_0,\beta)) \times \Sph^{d-1}) \leq \mu(\B(x_0,\beta)) + 
  \mu(\B(y_0,\beta))< \frac{1}{4}, $$
  Similarly, $M_\nu(\beta) < 1/8$, gives
  $$\gamma(\Sph^{d-1} \times (\B(x_0,\beta) \cup \B(y_0,\beta))) \leq
  \nu(\B(x_0,\beta)) + \nu(\B(y_0,\beta) <
  \frac{1}{4}, $$
  so that
  \begin{align*}
    &\gamma(\{ (x,y) \in M^2 \mid \d_M(x, x_0) > \beta,~~ \d_M(y, y_0)  > \beta,~ \d_M(y, - x_0) > \beta, \\
&\phantom{\gamma(\{ (x,y) \in M^2 \mid} \d_M(x, y_0) > \beta  \hbox{ and }  \d_M(x, y) > \eps \} \\
    &\quad = \gamma\bigg(M^2 \setminus \bigg[ (\B(x_0, \beta) \cup \B(y_0, \beta)) \times \Sph^{d-1} \\
    &\phantom{\quad = \gamma\bigg(M^2 \setminus} \cup \Sph^{d-1} \times (\B(x_0, \beta) \cup \B(y_0, \beta)) \cup \Delta_\eps\bigg] \bigg) \\
    &\quad \geq 1 - \gamma((\B(x_0,\beta) \cup \B(y_0,\beta)) \times \Sph^{d-1})\\
    &\qquad\quad- \gamma(\Sph^{d-1} \times (\B(x_0,\beta) \cup \B(y_0,\beta))) - 
    \gamma(\Delta_\eps) 
    \quad> 1/4.
  \end{align*} This proves the existence of
  $(x'_0,y_0')\in\spt(\gamma)$ such that $\d_M(x_0, x_0') > \beta$ and
  $\d_M(y_0, y_0') > \beta$ and $\d_M(x'_0, y'_0)\geq \eps$ and allows us to conclude.
\end{proof}

\begin{lemma} \label{lem:ccycl}
  Assume that $c$ is bounded from below by a constant $c_{min}$.
  Let $S\subseteq M \times M$ be a $c$-cyclically monotone set , which contains two
  pairs $(x_0,y_0), (x'_0, y'_0)$ such that the pairwise distance
  between the points $x_0,y_0,x'_0,y'_0$ is at least $\eps>0$. Then,
  $$ \forall (x,y) \in S,\quad c(x,y) \leq C_\eps := h(\eps) + 2h(\eps/2) + 2 |c_{min}|. $$
\end{lemma}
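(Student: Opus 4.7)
The plan is to apply the $c$-cyclical monotonicity of $S$ to the triple consisting of $(x,y)$ and the two distinguished pairs $(x_0,y_0),(x_0',y_0')$, and to exploit that the four reference points are pairwise at distance $\geq \eps$ so that by the triangle inequality no single external point can lie within $\eps/2$ of more than one of them. Concretely, I will use the monotonicity of $h$ to turn distance lower bounds into upper bounds on the off-diagonal costs appearing in the cyclic-monotonicity inequalities, and use $c \geq c_{\min}$ on the diagonal costs that must be transferred to the left-hand side.

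First, a \emph{WLOG reduction}: since $\d_M(y_0,y_0') \geq \eps$, the triangle inequality gives $\max(\d_M(x,y_0),\d_M(x,y_0')) \geq \eps/2$, so after swapping the labels of the two reference pairs if necessary (the hypotheses are symmetric in $(x_0,y_0) \leftrightarrow (x_0',y_0')$), I may assume $\d_M(x,y_0') \geq \eps/2$. Then I split on the position of $y$ relative to $x_0$:
\begin{itemize}
\item If $\d_M(x_0,y) \geq \eps/2$, I apply $3$-cyclical monotonicity to the triple $(x,y),(x_0',y_0'),(x_0,y_0) \in S$, yielding
\[ c(x,y) + c(x_0',y_0') + c(x_0,y_0) \leq c(x,y_0') + c(x_0',y_0) + c(x_0,y). \]
The three right-hand terms are bounded by $h(\eps/2)$, $h(\eps)$, $h(\eps/2)$ respectively (using $\d_M(x_0',y_0) \geq \eps$ from the pairwise separation), while the two diagonal terms on the left are $\geq c_{\min}$, giving $c(x,y) \leq h(\eps) + 2h(\eps/2) - 2c_{\min}$.
\item If $\d_M(x_0,y) < \eps/2$, then $\d_M(x_0,x_0') \geq \eps$ combined with the triangle inequality forces $\d_M(x_0',y) \geq \eps/2$. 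I then apply $2$-cyclical monotonicity to $(x,y),(x_0',y_0')$:
\[ c(x,y) + c(x_0',y_0') \leq c(x,y_0') + c(x_0',y), \]
yielding $c(x,y) \leq 2h(\eps/2) - c_{\min}$.
\end{itemize}
In both cases the bound is at most $h(\eps) + 2h(\eps/2) + 2|c_{\min}| = C_\eps$.

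There is no real obstacle; the only subtle point is organizing the case analysis so that, after the WLOG relabeling, exactly one pair (either $(x_0,y_0)$ or $(x_0',y_0')$) remains ``available'' for the swap that simultaneously controls both off-diagonal costs. The choice of the $3$-cycle versus the $2$-swap in the two cases is dictated precisely by which of $x_0,x_0'$ is forced to be far from $y$ by the triangle inequality, and the pairwise $\eps$-separation of $x_0,y_0,x_0',y_0'$ supplies the $h(\eps)$ term that appears in the harder $3$-cycle case.
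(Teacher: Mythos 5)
Your proof is correct and rests on exactly the same two ingredients as the paper's argument: applying $c$-cyclical monotonicity to the pairs $(x,y),(x_0,y_0),(x_0',y_0')$ and using the triangle inequality to observe that the $\eps$-separation of $y_0,y_0'$ (resp.\ $x_0,x_0'$) forces one of $\d_M(x,y_0),\d_M(x,y_0')$ (resp.\ one of $\d_M(x_0,y),\d_M(x_0',y)$) to be at least $\eps/2$. The paper packages this as a nested minimum $F(x,y)$ over the relevant permutations of the three pairs and bounds that minimum, whereas you make the same dichotomy explicit through a WLOG relabeling and a two-case split that selects the winning permutation directly; the content is the same, and the only point you leave implicit is the harmless check that the Case~2 bound $2h(\eps/2)-c_{\min}$ is indeed at most $C_\eps$, which follows from $h(\eps)\geq c_{\min}\geq -|c_{\min}|$.
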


\begin{proof}
  Using the $c$-cyclical
  monotonicity of $S$ and $c\geq c_{min}$ one has
  \[ c(x,y) \leq c(x,y) + c(x_0,y_0) + c(x'_0,y'_0) - 2 c_{min} \leq F(x,y) + 2|c_{min}| \]
  where
  $$\left\{\begin{aligned}
    &F(x,y) = \min(c(x,y_0) + R_1(y), c(x,y'_0) + R_2(y)) \\
    &R_1(y) = \min(c(x_0,y) + c(x'_0,y'_0), c(x_0,y'_0) + c(x'_0,y))) \\
    &R_2(y) = \min(c(x_0,y) + c(x'_0,y_0), c(x_0,y_0) + c(x'_0,y))).
  \end{aligned}\right.$$
By assumption, we have $\d_M(x_0, x_0') \geq \eps$,
 thus  $\max(\d_M(x_0,y), \d_M(x_0,y) \geq \eps /2$.
  Then, since $h$ is decreasing, one has  $\min(c(x_0,y),c(x'_0,y)) \leq h(\eps/2)$.
  We also have $c(x'_0,y'_0) \leq h(\eps)$ and $c(x_0,y'_0) \leq h(\eps)$, which leaves us with
  $$R_1(y) \leq h(\eps) + \min(c(x_0,y), c(x'_0,y)) \leq h(\eps) + h(\eps/2), $$
  and the same bound holds for $R_2(y)$. Using the same argument we 
  get $\min(c(x,y_0),c(x,y_0')) \leq h(\eps/2)$ and thus,
  \begin{equation*}
    F(x,y) \leq h(\eps) + h(\eps/2) + \min(c(x,y_0), c(x,y'_0)) \leq h(\eps) + 2h(\eps/2). \qedhere
  \end{equation*}
\end{proof}

\noindent \textbf{Proof of Theorem~\ref{th:kantorovichreflector}.}   
Let $\beta>0$ such that $M(\beta)> 1/8$. Let $\gamma$ be an optimal transport plan, and denote by  $S$  its
  support. By Lemma~\ref{lem:finite-cost}, the cost of this transport
  plan is finite. This implies that $S$ is $c$-cyclically monotone. Recall that by assumption, the cost $c$ is bounded from below. Therefore by 
    Lemmas~\ref{lem:pairs} and \ref{lem:ccycl} one has 
  $$ \forall (x,y) \in S,\quad c(x,y) \leq C_\eps := h(\eps) + 2h(\eps/2) + 2 |c_{min}|. $$
 where $\eps = \min(\beta,h^{-1}(4h(\beta/2)))$.   
  This directly implies that $S \subseteq D_\delta$ with $\delta = h^{-1}(C_\eps)$.

\subsection{Proof of Theorems~\ref{th:stabilitymapsreflector}}
Here, we come back to the sphere case, i.e. $M = \Sph^{d-1}$.
We recall that the reflector cost is given on  $M^2$ by $c(x,y) = - \ln(1 - \sca{x}{y})$. Note that on the unit sphere, $\d_M(x,y) = \arccos(\sca{x}{y})$, hence the reflector cost is of the form  $c(x,y) = h(\d_M(x,y))$ with $h(t) = -\ln(1 - \cos(t))$ and satisfies the assumptions of Theorem~\ref{th:kantorovichreflector}. 

\begin{lemma}
\label{lem:Depscconvex}
For $\eps < 2$, $D_\eps$ is symmetrically $c$-convex.
\end{lemma}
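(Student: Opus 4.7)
The plan is to compute the $c$-exponential map $\cexp_x$ on the sphere explicitly, and then observe that the geodesic distance $\d_M(x,y_t)$ along any $c$-segment $(y_t)_{t \in [0,1]} = [y_0,y_1]_x$ is a strictly decreasing function of $|p_t|$, where $p_t = (1-t)p_0 + t p_1 \in T_xM$ and $p_i = -\nabla_x c(x,y_i)$. This reduces the $c$-convexity of $D_\eps$ in the second variable to the ordinary convexity of the Euclidean norm on $T_xM$.

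First I would parametrize: for $y \in \Sph^{d-1} \setminus \{x\}$, write $y = \cos\theta\, x + \sin\theta\, u$ with $\theta = \d_M(x,y) \in (0,\pi)$ and $u \in T_xM$ a unit vector. A direct computation of the intrinsic gradient of $c(x,y) = -\ln(1 - \sca{x}{y})$ (projecting the ambient gradient $y/(1-\sca{x}{y})$ onto $T_xM$) yields
\[
-\nabla_x c(x,y) = -\cot(\theta/2)\, u.
\]
Inverting this relation expresses the $c$-exponential as $\cexp_x(p) = \frac{|p|^2-1}{|p|^2+1}\,x - \frac{2}{|p|^2+1}\,p$, and in particular gives the key identity
\[
\d_M\bigl(x,\cexp_x(p)\bigr) = 2\,\mathrm{arccot}(|p|),
\]
which is strictly decreasing in $|p| \in (0,+\infty)$.

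Now given $(x,y_0), (x,y_1) \in D_\eps$ with $\eps < 2 < \pi$, set $p_i = -\nabla_x c(x,y_i)$, so $|p_i| = \cot(\d_M(x,y_i)/2) \leq \cot(\eps/2)$ because $\cot$ is decreasing on $(0,\pi/2)$. By convexity of the norm,
\[
|p_t| = |(1-t)p_0 + t p_1| \leq (1-t)|p_0| + t|p_1| \leq \cot(\eps/2),
\]
whence $\d_M(x,y_t) = 2\,\mathrm{arccot}(|p_t|) \geq \eps$, i.e.\ $(x,y_t) \in D_\eps$. Since the reflector cost is symmetric in its two arguments, the identical argument applied with the roles of $x$ and $y$ swapped shows that $[x_0,x_1]_y \subseteq D_\eps$ whenever $(x_0,y),(x_1,y) \in D_\eps$, thereby establishing the symmetric $c$-convexity of $D_\eps$.

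The argument involves only elementary spherical trigonometry, so I anticipate no real obstacle. The only point of care is to ensure $\eps/2 < \pi/2$ so that $\cot(\eps/2) \in (0,+\infty)$ is positive and finite; the hypothesis $\eps < 2$ provides this with margin (and in fact the same proof goes through for any $\eps < \pi$).
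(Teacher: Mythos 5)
Your proof is correct and follows the same underlying strategy as the paper's: compute $\cexp_x$ explicitly, express $\d_M(x,\cexp_x(p))$ as a decreasing function of $\nr{p}$, reduce the statement to convexity of the Euclidean norm on $T_xM$, and invoke the symmetry of $c$ for the other half of symmetric $c$-convexity. Where you differ is in the intermediate computation: you use the exact identity $\d_M(x,\cexp_x(p)) = 2\,\mathrm{arccot}(\nr{p})$, so the hypothesis $(x,y_i)\in D_\eps$ becomes exactly $\nr{p_i}\leq\cot(\eps/2)$ and the desired conclusion pops out with no loss. The paper instead estimates via $\cos\eps\geq 1-\eps^2/2$ and the chordal quantity $\nr{x-y_0}$. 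Your version is sharper in two respects. First, as you observe, it proves the lemma for all $\eps<\pi$, not just $\eps<2$. Second, it sidesteps a gap in the paper's chain of inequalities: from $(x,y_0)\in D_\eps$ one only has $\d_M(x,y_0)\geq\eps$, which gives $\nr{x-y_0}=2\sin(\d_M(x,y_0)/2)\geq 2\sin(\eps/2)$ but \emph{not} $\nr{x-y_0}\geq\eps$ (the chordal distance is the smaller of the two); consequently the paper's step $\nr{p_0}^2 = 4/\nr{x-y_0}^2-1\leq 4/\eps^2-1$ is not justified as written, and in fact $\cot^2(\eps/2)\leq 4/\eps^2-1$ would require $\sin(\eps/2)\geq\eps/2$, which is false. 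Carrying the exact trigonometry through as you do avoids this difficulty entirely and yields the correct constant $\cot^2(\eps/2)$.
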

\begin{proof}
A simple computation gives for $x \in M$, that $\nabla_x c(x,\cdot) : M \setminus \{x\} \to T_x M$ is one to one and given  by
\[ \nabla_x c (x,y) = \frac{y - \sca{x}{y}x}{1 - \sca{x}{y}} \]
and the inverse of $-\nabla_xc(x, \cdot)$ is
\[ \cexp_x(p) = \left( 1 - \frac{2}{1 + \nr{p}^2} \right) x - \frac{2}{1 + \nr{p}^2} p \]
Let $(x,y_0)$ and $(x, y_1)$ in $D_\eps$, and define the $c$-segment $(y_t) = [y_0, y_1]_x$.
For $p_0 = \nabla_x c(x,y_0)$ and $p_1 = \nabla_x c(x,y_1)$, we put $p_t = (1-t) p_0 + t p_1$, so that $y_t = \cexp_x(p_t)$.
We want to show that $(x,y_t) \in D_\eps$, hence we only have to show that $\d_M(x,y_t) \geq \eps$ .
We have
\[ x - y_t = \frac{2}{1 + \nr{p_t}^2} x + \frac{2}{1 + \nr{p_t}^2} p_t.\]
Since $x$ is orthogonal to $p_t$ and $\nr{x} = 1$, we get 
\[ \d_M(x,y_t) = \arccos(\sca{x}{y_t}) = \arccos\left(1 - \frac{2}{1 + \nr{p_t}^2}\right).\]
So $\d_M(x,y_t) \geq \eps$ is satisfied if $1 - \frac{2}{1 + \nr{p_t}^2} \leq \cos(\eps)$. Since $cos(\eps) \geq 1 - \eps^2/2$ it is sufficient to show that 
\[ \frac{2}{1 + \nr{p_t}^2} \geq \eps^2 /2. \]
%
Since $\nr{p_t} \leq \max(\nr{p_0}, \nr{p_1})$, and by symmetry of $p_0$ and $p_1$ it is sufficient to show that $\nr{p_0}^2 \leq \frac{4}{\varepsilon^2} - 1$.
Again using that $\nr{x} = \nr{y_0} = 1$, we have
\[ \nr{p_0}^2 = \nr{\frac{y_0 - \sca{x}{y_0}x}{1 - \sca{x}{y_0}^2}}^2 = \frac{1 - \sca{x}{y_0}^2}{(1 - \sca{x}{y_0})^2} = \frac{1 + \sca{x}{y_0}}{1 - \sca{x}{y_0}}\]
Finally using the relation $\sca{x}{y_0} = 1 -  \nr{x-y_0}^2 / 2$, we get
\[ \nr{p_0}^2 = \frac{4}{\nr{x-y_0}^2} - 1 \leq \frac{4}{\varepsilon^2} - 1 \]
and in conclusion, $D_\eps$ is $c$-convex.
Note that by symmetry it is obviously symmetrically c-convex.
\end{proof}

\noindent \textbf{End of proof of Theorem~\ref{th:stabilitymapsreflector}}
Since $\mu$ and $\nu_0$ are absolutely continuous there exists $\beta > 0$ such that $M_\mu(\beta) < 1/8$, $M_{\nu_0}(\beta) < 1/8$ and $M_{\nu_1}(\beta) < 1/8$.
Therefore, by Theorem~\ref{th:kantorovichreflector}, there exists $\eps >0$ such that for every $x \in M$, $(x, T_i(x)) \in D_\eps$. 
The set $D_\eps$ is a compact set and symmetrically c-convex by Lemma~\ref{lem:Depscconvex}. Recall that the optimal transport map $T_0$ between $\mu$ and $\nu_0$ is of the form $T_0(x)=\argmin_{y\in N} c(x,y) - \psi_0(y)$, where $\psi_0:N\to \Rsp$ is a $c$-concave function. 
Since $\mu$ and $\nu_0$ have $\Class^{1,1}$ strictly positive densities, a result of Gregoire Loeper~\cite[Theorem 2.5]{loeper2011regularity} implies that $\psi_0$ is of class $\Class^3$ and that $T:x\mapsto \cexp_x(\nabla \psi^c(x))$ is of class $\Class^2$. As seen in the proof of Theorem~\ref{th:kantorovichreflector}, $\psi_1$ is $c$-concave for the truncated cost, which is Lipschitz, and is therefore also Lipschitz.  Furthermore, it is known that the reflector cost satisfies MTW and (STwist)~\cite{loeper2011regularity}. We  can thus apply Corollary~\ref{cor:strongcconc} which gives that $\psi_0$ is strongly c-concave on $D_\eps$. We then conclude by applying Theorem~\ref{th:stability-cconc}.


\section{Prescription of Gauss curvature measure}\label{sec:gaussmeasure}
The problem of Gauss curvature measure prescription for a convex body
has been introduced by A.D. Aleksandrov in
1950~\cite{alexandrov1950convex} and has been shown to be equivalent
to an optimal transport problem on the
sphere~\cite{oliker2007embedding,bertrand2016prescription}. In this
section we apply our stability result to this  optimal
transport problem.

To this purpose we define the Gauss curvature measure introduced
in~\cite{alexandrov1950convex}.  Let $K \subseteq \R^d$ be a closed
bounded convex body such that $0 \in \inter(K)$.  We denote by
$\rho_K: \Sph^{d-1} \to \R$ the radial parametrization of $\partial K$
defined for any direction $x$ in the sphere $\Sph^{d-1}$ by $\rho_K(x)
= \sup \{ r \in \R \mid rx \in K\}$. This induces a  homeomorphism
$\overrightarrow{\rho_K}$ from $\Sph^{d-1}$ to $\partial K$ defined by
\begin{align*}
\overrightarrow{\rho_K} : \Sph^{d-1} &\to \partial K \\
x &\mapsto \rho_K(x)x
\end{align*} 
We call (multivalued) Gauss map, the map $\mathcal{G}_K$ which maps a point $x\in\partial K$ to the set of unit exterior normals to $K$ at $x$, namely
$$G_K(x) = \{ n \in \Sph^{d-1} \mid x\in\arg\max_K\sca{n}{\cdot} \}.  $$
Note that  $\mathcal{G}_K(x)$ is a set when $K$ is not smooth at $x$.
Through this section, we denote by $\sigma$ the uniform probability measure on the sphere $\Sph^{d-1}$, i.e. the normalized $(d-1)$-dimensional Hausdorff measure.
\begin{definition}[Gauss curvature measure]
  Let $K$ be a bounded convex body containing $0$ in its interior. The
  \emph{Gauss curvature measure} of $K$, denoted $\mu_K$, is a
  probability measure over $\Sph^{d-1}$ defined for any Borel subset $A \subseteq
  \Sph^{d-1}$ by $\mu_K(A) = \sigma(\mathcal{G}_K \circ
  \overrightarrow{\rho_K}(A))$.
\end{definition}

The \emph{Gauss curvature measure prescription problem} is the
following inverse problem: given a measure $\mu \in
\Prob(\Sph^{d-1})$, is it possible to find a convex body $K$ such that
$\mu=\mu_K$ ?  It is well-known that convexity of $K$ implies that for
every non-empty spherical convex subset $\Theta \subsetneq \Sph^{d-1}$ -- i.e. subsets $\Theta$ that contains any mimimizing geodesic between any pair of its points ---
we have
\begin{equation}
\label{eq:convexmeasure}
\mu_K(\Theta) < \sigma(\Theta_{\pi/2})
\end{equation}
with $\Theta_{\pi/2} = \{ x \in \Sph^{d-1} \mid d_M(x,\Theta) < \pi/2 \}$, and where where $d_M$ is the geodesic distance on the sphere. 
Aleksandrov's theorem states that Equation~\eqref{eq:convexmeasure} is in fact a sufficient condition for $\mu$ to be the Gauss curvature measure of a convex body.
\begin{theorem}[Aleksandrov]
Let $\mu \in \Prob(\Sph^{d-1})$ be a probability measure satisfying condition~\eqref{eq:convexmeasure}, then there exists a unique (up to homotheties) convex body $K \subseteq \R^d$ with $0 \in \inter(K)$ such that $\mu$ is the Gaussian curvature measure of $K$.
\end{theorem}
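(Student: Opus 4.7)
The plan is to follow Oliker's reformulation of Aleksandrov's problem as a Monge--Kantorovich problem on $\Sph^{d-1}$ with logarithmic cost. Consider the cost $c(y,x) = -\log \sca{y}{x}$ for $\sca{y}{x}>0$, extended by $+\infty$ otherwise, and the transport problem from $\sigma$ to $\mu$. To each convex body $K$ with $0 \in \inter(K)$ we associate its log-radial function $\phi_K := \log\rho_K$ and its log-support function $\psi_K(y) := \log h_K(y)$, where $h_K(y) = \max_{z \in K}\sca{y}{z}$. The geometric identity $h_K(y) = \rho_K(x)\sca{y}{x}$, valid exactly when $y$ is a unit outer normal to $K$ at the boundary point $\rho_K(x)x$, becomes $\psi_K(y) - c(y,x) = \phi_K(x)$ on such pairs and $\leq$ elsewhere, so that $(\psi_K,\phi_K)$ is a pair of $c$-conjugate Kantorovich potentials. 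Conversely, any $c$-concave potential $\psi$ defines a convex body $K := \bigcap_y \{z \mid \sca{y}{z} \leq e^{\psi(y)}\}$.

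For existence, I would first use the continuous marriage Lemma~\ref{lem:hall} applied to the closed sets $P_\epsilon := \{(y,x) \mid \sca{y}{x}\geq\epsilon\}$ to produce a transport plan in $\Gamma(\sigma,\mu)$ with finite cost. The key observation is that for any closed spherical convex $\Theta\subsetneq \Sph^{d-1}$ the polar set $-\Theta^*=\{x\mid \sca{x}{y}\leq 0\ \forall y\in\Theta\}$ is again spherical convex and coincides with $\Theta_{\pi/2}^c$; applying condition \eqref{eq:convexmeasure} to $-\Theta^*$ (whose $\pi/2$-enlargement equals $\Theta^c$ by the spherical bipolar theorem) yields the dual form $\mu(\Theta_{\pi/2}) > \sigma(\Theta)$. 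A compactness argument on the Hausdorff-compact family of spherical convex sets then converts this strict inequality into a uniform slack $\epsilon$, and an exhaustion reduces arbitrary Borel sets to spherical convex hulls. Lower semicontinuity yields an optimal plan $\gamma^*$ with finite cost together with a $c$-concave Kantorovich potential $\psi$ on $\Sph^{d-1}$.

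Given such $\psi$, define $K$ by the intersection of half-spaces above. Arguing as in Theorem~\ref{th:kantorovichreflector}, the strict inequalities in \eqref{eq:convexmeasure} yield a quantitative lower bound $\sca{y}{x}\geq\delta>0$ on $\spt(\gamma^*)$, which in turn shows that $\psi$ is bounded on $\Sph^{d-1}$, so $K$ is a bounded convex body with $0\in\inter(K)$. By construction $h_K\leq e^{\psi}$, and the reverse inequality follows from $c$-concavity of $\psi$. Setting $\phi:=\psi^c$ one obtains $e^{\phi}=\rho_K$, and for $\sigma$-a.e.\ $y$ the infimum defining $\psi^c$ is uniquely attained at some $x=S(y)$ satisfying $y\in\mathcal{G}_K(\rho_K(x)x)$. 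Hence $\gamma^* = (\id,S)_\#\sigma$, and pushing forward by the second projection gives $\mu = S_\#\sigma = \mu_K$ by the very definition of the Gauss curvature measure.

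For uniqueness, if $K_1$ and $K_2$ both realize $\mu$, then $\psi_i := \log h_{K_i}$ are $c$-concave Kantorovich potentials for the same pair of marginals $(\sigma,\mu)$; uniqueness up to an additive constant, guaranteed because $\sigma$ has full support and the cost is smooth and satisfies (Twist) on $\{\sca{y}{x}>0\}$, gives $h_{K_1} = \lambda h_{K_2}$ for some $\lambda>0$, hence $K_1 = \lambda K_2$. The main obstacle will be establishing the uniform separation $\sca{y}{x}\geq\delta$ on $\spt(\gamma^*)$, since without it the Kantorovich potential cannot be identified with the log-support function of a genuine bounded convex body: the polar-duality computation must be combined with a compactness argument on spherical convex sets to convert the pointwise strict inequality \eqref{eq:convexmeasure} into a uniform gap, much in the spirit of how Theorem~\ref{th:kantorovichreflector} is proved.
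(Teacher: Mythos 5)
The paper does not prove this theorem: it is stated as a classical result of Aleksandrov with a citation, and the optimal-transport reformulation is attributed to Oliker and Bertrand. Your sketch follows precisely that reformulation, so the \emph{approach} is the right one and there is no in-text proof to compare against; what remains is whether the sketch actually closes. Your derivation of the dual form of \eqref{eq:convexmeasure} via spherical polarity, namely $\mu(\Theta_{\pi/2})>\sigma(\Theta)$ for spherical convex $\Theta$, is correct. However, the step where you turn this into the marriage condition for the closed set $P_\eps=\{\sca{y}{x}\geq\eps\}$ is where the real work lies, and as written it is not right. You say ``an exhaustion reduces arbitrary Borel sets to spherical convex hulls,'' but passing from $B$ to its spherical convex hull $\hat B$ goes the wrong way: from $\mu(\hat B_{\pi/2})>\sigma(\hat B)\geq\sigma(B)$ you cannot infer anything about $\mu(B_{\eps})$, since $B_{\eps}\subseteq\hat B_{\eps}$. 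The reduction that works goes through the \emph{complement}: $\Theta:=B_{\pi/2}^c$ is spherical convex, $\Theta_{\pi/2}=\hat B^c$ by the bipolar theorem, and \eqref{eq:convexmeasure} applied to $\Theta$ gives $\mu(B_{\pi/2})>\sigma(\hat B)\geq\sigma(B)$ for every Borel $B$. Even so this only gives the $\eps=0$ inequality. To apply Lemma~\ref{lem:hall} with a \emph{closed} set $P_\eps$ ($\eps>0$) you need a single $\eps$ working for all $B$; this requires a genuine uniform-gap argument, e.g.\ lower semicontinuity of $\Theta\mapsto\sigma(\Theta_{\pi/2})-\mu(\Theta)$ on the Hausdorff-compact family of nonempty proper closed spherical convex sets (here you need absolute continuity of $\sigma$ for the first term and upper semicontinuity of $\Theta\mapsto\mu(\Theta)$ on closed sets for the second) together with a control of $\mu(B_{\pi/2}\setminus B_\eps)$ uniformly in $B$.

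Two further points deserve care. The identification $e^\phi=\rho_K$ and the conclusion $S_\#\sigma=\mu_K$ presuppose that $\psi^c$ is finite and that the $c$-superdifferential is single valued $\sigma$-a.e.; the first follows once the uniform separation $\sca{y}{x}\geq\delta$ on $\spt(\gamma^*)$ is established, and the second uses the (Twist) property plus absolute continuity of $\sigma$, but neither is automatic and both should be stated. For uniqueness, invoking uniqueness of Kantorovich potentials up to additive constants is reasonable, but for a cost taking the value $+\infty$ this is not an off-the-shelf fact: one has to show that the $c$-transform relation together with the uniform separation on $\spt(\gamma)$ pins down $\nabla\psi$ $\sigma$-a.e., and then use that $\Sph^{d-1}$ is connected and $\sigma$ has full support. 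These are all fixable in the spirit of Bertrand's paper, but in its current form the argument is a blueprint rather than a proof.
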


\subsection{An optimal transport problem}
Following~\cite{oliker2007embedding, bertrand2016prescription} we
briefly recall that this inverse problem can be recast as an optimal
transport problem on the sphere for the cost $c(x,n) =
-\ln(\max(0,\sca{x}{n}))$, which takes value $+\infty$ when
$\sca{x}{n}\leq 0$. Let $\mu$ be any measure in $\Prob(\Sph^{d-1})$
satisfying condition~\eqref{eq:convexmeasure}. Note that the very same cost plays an important role in the theory of unbalanced optimal transport \cite{chizat2018interpolating,liero2018optimal,gallouet2021regularity}.

In the following proposition, we use the notion of \emph{support function}
of a convex set $K$, defined by
\[ h_K(n) = \sup_{x \in \Sph^{d-1}} \rho_K(x) \sca{x}{n}.  \]
\begin{proposition}[\cite{oliker2007embedding, bertrand2016prescription}]
Let $\sigma \in \Prob(\Sph^{d-1})$ be the uniform measure over the sphere, let $K$ be a compact convex body containing zero in its interior, and let $\mu = \mu_K$.  Then,
\begin{itemize}
\item The map $T_K:\Sph^{d-1} \to \Sph^{d-1}$ defined  $\sigma$-a.e by 
\[ T_K(n) = (\Grhok)^{-1}(n) \]
 is the optimal transport map between  $\sigma$ and $\mu$ for the cost $c$. \\

\item The functions $\phi_K = - \ln(h_K)$ and $\psi_K = \ln(\rho_K)$ are maximizers of the Kantorovich dual problem. In particular we have
\begin{equation} \label{eq:gauss:kd}
\int_{\Sph^{d-1}} c(T_K(n),n) \d \sigma(n) = 
\int \phi_K(n) \dd \sigma(n) + \int \psi_K(x) \dd \mu_K(x). \end{equation}
\end{itemize}
\end{proposition}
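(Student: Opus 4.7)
The plan is to establish three facts and then combine them via weak Kantorovich duality: (i) $T_K$ is $\sigma$-a.e.\ well defined and satisfies $(T_K)_\#\sigma=\mu_K$; (ii) the candidate dual pair $(\phi_K,\psi_K)$ satisfies the feasibility inequality $\phi_K(n)+\psi_K(x)\leq c(x,n)$ for every $(x,n)\in\Sph^{d-1}\times\Sph^{d-1}$; (iii) equality holds in (ii) along the graph of $T_K$. Once these are in hand, the common value
\[
\int_{\Sph^{d-1}}\phi_K\,d\sigma+\int_{\Sph^{d-1}}\psi_K\,d\mu_K=\int_{\Sph^{d-1}}c(T_K(n),n)\,d\sigma(n)
\]
is simultaneously a lower bound on $\inf_{\gamma\in\Gamma(\sigma,\mu_K)}\int c\,d\gamma$ (by (ii) and the push-forward identity in (i)) and the cost of the feasible plan $(\mathrm{id},T_K)_\#\sigma$ (by (iii) together with (i)), which forces $T_K$ to be an optimal Monge map and $(\phi_K,\psi_K)$ to be a dual maximizer, yielding the stated identity~\eqref{eq:gauss:kd}.

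For (i), the support function $h_K$ is convex and $1$-homogeneous, hence differentiable off a Lebesgue-negligible set; at any point $n$ of differentiability, $K$ admits a unique supporting point in the direction $n$, so the multivalued Gauss map $\mathcal{G}_K$ is single-valued at the corresponding boundary point. Since $\overrightarrow{\rho_K}$ is a homeomorphism from $\Sph^{d-1}$ onto $\partial K$, this makes $\mathcal{G}_K\circ\overrightarrow{\rho_K}$ a $\sigma$-a.e.\ bijection, so $T_K$ is well defined $\sigma$-a.e. The push-forward identity is then immediate from the very definition of $\mu_K$: for every Borel $A\subseteq\Sph^{d-1}$,
\[
(T_K)_\#\sigma(A)=\sigma(T_K^{-1}(A))=\sigma\bigl(\mathcal{G}_K\circ\overrightarrow{\rho_K}(A)\bigr)=\mu_K(A).
\]

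For (ii), the starting point is the tautology $\rho_K(x)\langle x,n\rangle\leq h_K(n)$ which is just the definition of $h_K$; since $0\in\inter(K)$, $h_K>0$ everywhere, so taking logarithms in the regime $\langle x,n\rangle>0$ gives $\psi_K(x)+\phi_K(n)\leq-\ln\langle x,n\rangle=c(x,n)$, while the regime $\langle x,n\rangle\leq 0$ is trivial since there $c=+\infty$. Equality holds precisely when $\rho_K(x)x$ is a maximizer of $\langle\cdot,n\rangle$ on $K$, i.e.\ when $n\in\mathcal{G}_K(\rho_K(x)x)$; at $x=T_K(n)$ this is built into the very definition of $T_K$, which proves (iii), and as a byproduct $\langle T_K(n),n\rangle=h_K(n)/\rho_K(T_K(n))>0$, so the cost is finite along the graph. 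The main obstacle is the bookkeeping of negligible sets in (i): one must check both that the set of $n$ at which the Gauss map is multivalued is $\sigma$-negligible (used to define $T_K$ and to push $\sigma$ forward) and that the dual equality in (iii) holds on a set of full $\sigma$-measure so that it integrates cleanly. Both are classical consequences of almost-everywhere differentiability of convex functions, but they must be marshalled with some care given that $c$ takes the value $+\infty$ on part of the domain.
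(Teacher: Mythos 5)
Your proposal is correct and follows essentially the same approach as the paper: establish the weak-duality feasibility inequality $\phi_K(n)+\psi_K(x)\leq c(x,n)$ from the definition of $h_K$, observe equality along the graph of $T_K$, and conclude optimality of both the map and the dual pair by integrating. The only cosmetic difference is that you justify the $\sigma$-a.e.\ well-definedness of $T_K$ directly via a.e.\ differentiability of $h_K$ (and you verify the push-forward identity $(T_K)_\#\sigma=\mu_K$ explicitly), whereas the paper cites \cite{bertrand2016prescription} for that fact; one small imprecision is that differentiability of $h_K$ at $n$ gives uniqueness of the supporting point, not single-valuedness of $\mathcal{G}_K$ at that point, but the former is exactly what is needed and the conclusion stands.
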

For the sake of completeness, we recall the proof of this proposition. 
\begin{proof} Let $(x,n) \in \Sph^{d-1}\times \Sph^{d-1}$ be such that
  $c(x,n)< +\infty$,
  i.e. $\sca{x}{n} >0$. Then,
  \begin{equation}\label{eq:kd:admiss} h_K(n) = \max_{y\in K}\sca{n}{y} \geq \sca{n}{\rho_K(x)x} =
    \rho_K(x) \sca{n}{x},
  \end{equation}
with equality if and only if $n\in \mathcal{G}_K(x)$.  Since all
quantities are positive, taking the logarithm, we see that $\phi_K(n)
+ \psi_K(x) \leq c(x,n)$, ensuring that $(\phi_K, \psi_K)$ are
admissible for the dual Kantorovich problem.

  Note that e.g. by \cite{bertrand2016prescription} $\sigma$-a.e. direction $n\in\Sph^{-d-1}$ is normal to a unique point in $\partial K$. This implies that the map $T_K = (\Grhok)^{-1}$ is well
  defined $\sigma$-a.e. The equality case of \eqref{eq:kd:admiss} gives
$$ \phi_K(n) + \psi_K(T_K(n)) \leq c(x,T_K(n)).$$ Integrating this
  equality with respect to $\sigma$ directly gives
  \eqref{eq:gauss:kd}. In turn, Kantorovich duality implies that $T_K$
  is an optimal transport between $\sigma$ and $\mu$, and  that $(\phi_K,\psi_K)$ is a
  maximizer in the dual Kantorovich problem.
\end{proof}

\subsection{Stability of transport maps}
In this subsection we apply our stability result to the Gauss curvature measure prescription problem. We introduce the following notation:
$$ \K(r,R) = \{ K \subseteq \Rsp^d\hbox{ convex, compact } \mid B(0,r) \subseteq K \subseteq B(0,R)\}.$$

\begin{proposition}
\label{prop:stabcurvature}
Let $K$ be a strictly convex and $\Class^2$ compact convex body
containing $0$ in its interior. Then, for any $R>r>0$, there exists a constant $C$ depending on $K$, $r$ and $R$ such that
\[
\forall L \in \K(r,R),\quad 
\nr{\d_M(T_K,T_L)}^2_{L^2(\sigma)} \leq C  \Wass_1(\mu_K, \mu_L). \]
\end{proposition}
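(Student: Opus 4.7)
The plan is to apply Theorem~\ref{th:stability-cconc} to the triple $(\sigma,\mu_K,\mu_L)$ for the Gauss curvature cost $c(n,x)=-\ln\max(0,\langle n,x\rangle)$, combined with the differential criterion of Corollary~\ref{cor:strongcconc} to certify that the reference potential $\psi_K$ is strongly $c$-concave. The main technical difficulty is that $c$ takes the value $+\infty$ on the set $\{\langle n,x\rangle\leq 0\}$, so I must first isolate a compact set $D\subseteq\Sph^{d-1}\times\Sph^{d-1}$ on which $c$ is smooth and which contains the graphs of both $T_K$ and $T_L$ for every $L\in\K(r,R)$.

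To construct $D$, I use the explicit description of the map $T_L$. For any $L\in\K(r,R)$ and $\sigma$-a.e.\ $n\in\Sph^{d-1}$, the relation $h_L(n)=\rho_L(T_L(n))\langle T_L(n),n\rangle$ combined with the two-sided bounds $h_L(n)\geq r$ and $\rho_L\leq R$ (which follow from $B(0,r)\subseteq L\subseteq B(0,R)$) yields
$$ \langle T_L(n),n\rangle \geq r/R. $$
Hence the graph of every such $T_L$ is contained in the compact set
$$ D := \{(n,x)\in\Sph^{d-1}\times\Sph^{d-1} \mid \langle n,x\rangle \geq r/R\}, $$
on which $c$ is smooth. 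Following the pattern of Lemma~\ref{lem:Depscconvex}, an explicit computation of the $c$-exponential for this cost shows that $D$ is symmetrically $c$-convex; the (STwist) and weak MTW conditions for the Gauss curvature cost are known from the regularity theory (see \cite{loeper2011regularity}).

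Under the hypothesis that $K$ is strictly convex and $\Class^2$, Loeper's regularity theorem for optimal transport with MTW costs implies that $\psi_K=\ln\rho_K$ is of class $\Class^2$ on $\Sph^{d-1}$ and that $T_K$ is of class $\Class^1$. Corollary~\ref{cor:strongcconc} applied on $D$ then yields that $\psi_K$ is strongly $c$-concave on $D$ with quadratic modulus $\omega(t)=C_K t^2$, where $C_K>0$ depends on $K$, $r$ and $R$. On the other hand, for every $L\in\K(r,R)$ the potential $\psi_L=\ln\rho_L$ is globally $c$-concave, and the Lipschitz constants of $\psi_K$ and $\psi_L$ are controlled uniformly in $L$ in terms of $r$ and $R$: indeed the radial function of a convex body in $\K(r,R)$ has Lipschitz norm bounded in terms of $r,R$, hence so does its logarithm since $\rho_L\geq r$.

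Gathering these ingredients and applying Theorem~\ref{th:stability-cconc} with $\mu=\sigma$, $\nu_0=\mu_L$ (potential $\psi_L$) and $\nu_1=\mu_K$ (strongly $c$-concave potential $\psi_K$) gives
$$ C_K \int_{\Sph^{d-1}} d_M(T_K(n),T_L(n))^2\, d\sigma(n) \leq (\Lip(\psi_K)+\Lip(\psi_L))\, W_1(\mu_K,\mu_L), $$
which after dividing by $C_K$ yields the conclusion with a constant $C$ depending only on $K$, $r$ and $R$. The principal obstacle is the verification that the graph of every $T_L$ lies uniformly in $D$, together with the invocation of Loeper's regularity theory to enable Corollary~\ref{cor:strongcconc}; once these steps are in place, the proof follows mechanically from the stability framework developed in Section~\ref{sec:stability}.
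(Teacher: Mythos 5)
Your overall strategy coincides with the paper's: isolate a compact symmetrically $c$-convex set $D_\eps$ away from $\{\langle x,n\rangle\leq 0\}$ containing the graphs of both $T_K$ and $T_L$, invoke (STwist) and (MTWw) on $D_\eps$, apply Corollary~\ref{cor:strongcconc} to $\psi_K$, and conclude via Theorem~\ref{th:stability-cconc}. Your derivation that the graph of $T_L$ lies in $\{\langle n,x\rangle\geq r/R\}$ is essentially the paper's Lemma~\ref{lemma:Depscurvature}, and your remark about the uniform Lipschitz control of $\psi_L$ over $L\in\K(r,R)$ is a legitimate (and welcome) observation that the paper leaves implicit.

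There is, however, a genuine gap in the way you justify the $\Class^2$ regularity of $\psi_K$ and the $\Class^1$ regularity of $T_K$. You invoke ``Loeper's regularity theorem for optimal transport with MTW costs,'' but that theorem requires quantitative hypotheses on the \emph{densities} of the measures (e.g.\ $\Class^{1,1}$ densities bounded above and below, as in the reflector application of Section~\ref{sec:reflector}), not on $K$ itself. If $K$ is merely $\Class^2$ and strictly convex, the density of $\mu_K$ with respect to $\sigma$ is controlled by the Gauss curvature of $\partial K$, which is then only continuous, so Loeper's hypotheses are not met; you would need to strengthen the assumptions on $K$ (e.g.\ to $\Class^{3,1}$) to make that route work. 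The point you are missing is that no OT-regularity theory is needed here: the hypothesis already gives everything directly. Since $\partial K$ is $\Class^2$, the radial function $\rho_K$ is $\Class^2$, hence $\psi_K=\ln\rho_K$ is $\Class^2$. Since $K$ is strictly convex with $\Class^2$ boundary, $\overrightarrow{\rho_K}$ is a $\Class^1$ diffeomorphism from $\Sph^{d-1}$ to $\partial K$ and the Gauss map $\mathcal{G}_K$ is a $\Class^1$ diffeomorphism from $\partial K$ to $\Sph^{d-1}$, so $T_K=(\mathcal{G}_K\circ\overrightarrow{\rho_K})^{-1}$ is $\Class^1$. Replacing your appeal to Loeper's theorem by this direct geometric argument closes the gap and recovers the paper's proof.
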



Note that in addition to the strict convexity and smoothness of $K$, the constant $C$ also depends on the anisotropy of $K$ --- i.e. the radii $R_K\geq r_K > 0$ such that $K\in \K(r_K,R_K)$.
The end of the section is devoted to the proof of
Proposition~\ref{prop:stabcurvature}. We  need to check that
the hypothesis of Corollary~\ref{cor:strongcconc} are satisfied for
the cost $c(x,n) = -\ln(\max(0,\sca{x}{n}))$.

\begin{lemma} \label{lemma:Depscurvature} Given any $R>r>0$, there exists $\eps>0$ such that for any set $K \in\K(r,R)$ and any $c$-optimal transport plan  $\gamma\in\Gamma(\sigma,\mu_K)$, one has
  $$
\spt(\gamma) \subseteq D_\eps,
$$
where $D_\eps = \{ (x,n) \in (\Sph^{d-1})^2 \mid d_M(x,n) \leq \pi/2 - \eps \}.$
\end{lemma}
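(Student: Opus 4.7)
The plan is to exploit the explicit optimal pair of Kantorovich potentials $\phi_K = -\ln h_K$ and $\psi_K = \ln \rho_K$ identified in the previous proposition. The underlying idea is purely a duality argument: for a point $(x,n)$ in the support of any optimal plan, complementary slackness forces the equality $\phi_K(n) + \psi_K(x) = c(x,n)$, which rewrites as $\sca{x}{n} = h_K(n)/\rho_K(x)$. Since $K \in \K(r,R)$, I can bound this ratio from below by $r/R$ uniformly in $K$, yielding the desired angular bound.

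The first step is the elementary observation that $B(0,r) \subseteq K \subseteq B(0,R)$ implies the uniform bounds $r \leq \rho_K(x) \leq R$ for all $x \in \Sph^{d-1}$ and $r \leq h_K(n) \leq R$ for all $n \in \Sph^{d-1}$. The second step is to upgrade the $\gamma$-a.e.\ complementary slackness identity to an identity valid on all of $\spt(\gamma)$. By strong duality applied to the optimal plan $\gamma$, one has $\int (c - \phi_K - \psi_K)\,\dd\gamma = 0$; since the integrand is non-negative, it vanishes $\gamma$-a.e. To extend this to the whole support, I will observe that $\phi_K$ and $\psi_K$ are continuous (as $h_K$, $\rho_K$ are continuous and bounded away from zero by step one), while $c$ is lower semi-continuous; hence $c - \phi_K - \psi_K$ is lower semi-continuous and its zero set is closed, so it contains $\spt(\gamma)$.

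Combining these steps, on $\spt(\gamma)$ the equality $-\ln(\sca{x}{n}) = -\ln h_K(n) + \ln \rho_K(x)$ gives in particular $\sca{x}{n} > 0$, and rearranges to
\[
\sca{x}{n} \;=\; \frac{h_K(n)}{\rho_K(x)} \;\geq\; \frac{r}{R}.
\]
Thus $\d_M(x,n) = \arccos(\sca{x}{n}) \leq \arccos(r/R)$, and the choice $\eps := \pi/2 - \arccos(r/R) > 0$, which depends only on $r$ and $R$ (not on $K$), gives the claim.

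I do not expect any genuine obstacle: the only slightly delicate point is the passage from a $\gamma$-a.e.\ identity to an identity on all of $\spt(\gamma)$, but this is handled cleanly by the lower semi-continuity argument. The uniformity in $K \in \K(r,R)$ is built in because the final constant $\eps$ depends only on the enclosing and enclosed radii.
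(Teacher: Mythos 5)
Your proof is correct and follows essentially the same route as the paper: complementary slackness with the explicit potentials $\phi_K = -\ln h_K$, $\psi_K = \ln \rho_K$, together with the uniform bounds $r \leq \rho_K, h_K \leq R$ coming from $K\in\K(r,R)$, yields $\sca{x}{n}\geq r/R$ on the support of any optimal plan, hence $\eps = \pi/2 - \arccos(r/R)$. The only (welcome) addition is that you spell out the passage from $\gamma$-a.e.\ equality to equality on all of $\spt(\gamma)$ via the closedness of $\{c-\phi_K-\psi_K = 0\} = \{c-\phi_K-\psi_K \leq 0\}$ for the nonnegative l.s.c.\ integrand, a point the paper states without justification.
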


\begin{proof} 
  By hypothesis,  $r \leq \rho_K(x) \leq R$ for all $x\in\Sph^{d-1}$,
where $\rho_K$ is the radial function of the convex $K$. Since
$$h_K(n) = \sup_{x \in \Sph^{d-1}} \rho_K(x) \sca{x}{n}, $$ we also have $r < h_K(n) < R$. Hence the two Kantorovich potential  $\phi_K(n) = -\ln(h_K(n))$ and $\psi_K(x) = \ln(\rho_K(x))$ therefore satisfy
$$ \phi_K(n) + \psi_K(x)  \leq -\ln(r) + \ln(R) = \ln(R/r),$$
By strong Kantorovich duality $\phi_K(n) + \psi_K(x) = c(x,n)$ on $\spt(\gamma)$, which implies that $c$ is bounded by $\ln(R/r)$ on $\spt(\gamma)$, i.e. for any $(x,n) \in \spt(\gamma)$, one has
$$ c(x,n) = - \ln(\max(0,\sca{x}{n})) \leq \ln(R/r), $$
implying that $\sca{x}{n}\geq r/R$ and $d_M(x,n) = \arccos(\sca{x}{n}) \leq \arccos(r/R)$.
Finally $(x,n) \in D_\eps$ with $\eps = \pi /2 -  \arccos(r/R)$. 
\end{proof}

\begin{lemma}\label{lemma:convexity-gauss}
The set $D_\eps = \{ (x,n) \in (\Sph^{d-1})^2 \mid d_M(x,n) \leq \pi/2 - \eps \}$ is symmetrically c-convex for the cost $c(x,n) = -\ln(\max(0,\sca{x}{n}))$.
\end{lemma}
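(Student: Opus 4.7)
The plan is to mimic the computation carried out for the reflector cost in Lemma~\ref{lem:Depscconvex}: first derive an explicit formula for the $c$-exponential attached to $c(x,n)=-\ln(\langle x,n\rangle)$ on the open set $\{\langle x,n\rangle>0\}$, then translate membership in $D_\eps$ into a simple scalar inequality that is visibly preserved along $c$-segments.

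First I would compute the Riemannian gradient of $c(x,\cdot)$ at $n$ along the sphere. The Euclidean gradient is $-n/\langle x,n\rangle$ and its projection onto $T_x\Sph^{d-1}$ is
\[
-\nabla_x c(x,n) \;=\; \frac{n-\langle x,n\rangle x}{\langle x,n\rangle} \;=\; \frac{n}{\langle x,n\rangle}-x \;\in\; T_x\Sph^{d-1}.
\]
Setting $p=-\nabla_x c(x,n)$, the identity $\langle x,n\rangle^2(\|p\|^2+1)=1$ follows directly from $p\perp x$ and $\|n\|=1$. Inverting gives
\[
\cexp_x(p) \;=\; \frac{p+x}{\sqrt{\|p\|^2+1}},\qquad \langle x,\cexp_x(p)\rangle \;=\; \frac{1}{\sqrt{\|p\|^2+1}}.
\]

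Next, I would rewrite the condition $(x,n)\in D_\eps$. Since $d_M(x,n)=\arccos\langle x,n\rangle$, the inequality $d_M(x,n)\leq \pi/2-\eps$ is equivalent to $\langle x,n\rangle\geq \sin(\eps)$. Now take $(x,n_0), (x,n_1)\in D_\eps$, set $p_i=-\nabla_x c(x,n_i)$ and $p_t=(1-t)p_0+tp_1$. The previous formula yields $\|p_i\|^2 = 1/\langle x,n_i\rangle^2 - 1 \leq \cot^2(\eps)$. By the triangle inequality applied to the affine segment $p_t$,
\[
\|p_t\|^2 \;\leq\; \bigl((1-t)\|p_0\|+t\|p_1\|\bigr)^2 \;\leq\; \max(\|p_0\|^2,\|p_1\|^2) \;\leq\; \cot^2(\eps),
\]
hence $\langle x,n_t\rangle = 1/\sqrt{\|p_t\|^2+1}\geq \sin(\eps)$, so $(x,n_t)\in D_\eps$. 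This shows $c$-convexity in the $n$-variable. Since the cost is symmetric in its two arguments, exchanging the roles of $x$ and $n$ gives the other direction, yielding symmetric $c$-convexity.

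I do not expect any genuine obstacle here: the only nontrivial ingredient is the closed form of $\cexp_x$, and once it is in hand the monotonicity $t\mapsto \|p_t\|^2 \leq \max(\|p_0\|^2,\|p_1\|^2)$ along Euclidean segments does all the work. The one subtlety worth checking is that $p_t$ indeed lies in $T_x\Sph^{d-1}$ (which is automatic since $T_x\Sph^{d-1}$ is a linear subspace of $\R^d$ and both $p_0,p_1$ belong to it), so that the formula for $\cexp_x$ applies at every $t\in[0,1]$.
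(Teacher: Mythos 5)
Your proof is correct and follows essentially the same route as the paper: both compute the $c$-exponential explicitly as $\cexp_x(p)=(p+x)/\sqrt{1+\|p\|^2}$, observe that $\langle x,\cexp_x(p)\rangle$ is a decreasing function of $\|p\|$, and conclude by bounding $\|p_t\|$ along the Euclidean segment by $\max(\|p_0\|,\|p_1\|)$. Your version of the last step (triangle inequality on $p_t$) is if anything a touch more explicit than the paper's one-line ``by symmetry'' appeal, but the argument is the same.
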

\begin{proof}
We have
\[\nabla_x c(x,n) = - \frac{n}{\sca{x}{n}} + x \]  
and by inverting $- \nabla_x c(x, \cdot)$ we get
\[\cexp_x(p) = \frac{p + x}{\sqrt{1 + \nr{p}^2}}\]
Let $(x,y_0) \in D_\eps$ and $(x,y_1) \in \D_\eps$ then we have $y_t = \cexp_x(p_t)$ where $p_0 = - \nabla_x c(x,y_0)$ and $p_1 = - \nabla_x c(x,y_1)$ and $p_t = (1-t) p_0 + t p_1$.
By symmetry we can consider that $\nr{p_t} \leq \nr{p_0}$, which implies $\frac{1}{\sqrt{1 + \nr{p_t}^2}} \geq \frac{1}{\sqrt{1 + \nr{p_0}^2}}$ and thus
\begin{align*}
 d_M(x,y_t) &= \arccos(\sca{x}{y_t}) = \arccos\left(\frac{1}{\sqrt{1 + \nr{p_t}^2}}\right) \\
 &\leq \arccos\left(\frac{1}{\sqrt{1 + \nr{p_0}^2}}\right) = d_M(x,y_0) \leq \frac{\pi}{2} - \eps 
 \qedhere
\end{align*}
\end{proof}

\noindent \textit{End of proof of Proposition~\ref{prop:stabcurvature}.} 
The map $T_K$ (resp. $T_L$) is the optimal transport map between the uniform measure $\sigma$ on $\Sph^{d-1}$ and $\mu_K$ (resp. $\mu_L$) for the cost $c(x,n) = -\ln(\max(0,\sca{x}{n}))$.
From Lemma~\ref{lemma:Depscurvature}, for any $n \in \Sph^{d-1}$ we have $(T_K(n),n) \in D_\eps$ and $(T_L(n),n) \in D_\eps$.
Note that for $(x,n) \in D_\eps$, one has $\sca{x}{n}>0$ and therefore $c(x,n) = -\ln(\sca{x}{n}) = -\ln(\cos(\d_M(x,n)))$. It has been shown in~\cite{gallouet2021regularity} that this cost satisfies (STwist) and \eqref{eq:MTWw} on $D_\eps$.
By Lemma~\ref{lemma:convexity-gauss} the set $D_\eps$ is a symmetrically c-convex compact set.

Finally it remains to show that $\psi_K$ is of class $\Class^2$ and $T_K$ is of class $\Class^1$.
Since $\partial K$ is $\Class^2$, its radial parametrization $\rho_K$ is also $\Class^2$, so $\psi_K = \ln(\rho_K)$ of class $\Class^2$.
Furthermore $\overrightarrow{\rho_K}(x) = \rho_K(x) x$ is a $\Class^1$ diffeomorphism.
Since $K$ is stricly convex and $\partial K$ is of class $\Class^2$, its associated Gauss map $\mathcal{G}_K$ is a $\Class^1$ diffeomorphism. 
We thus have that $T_K = (\Grhok)^{-1}$ is of class $\Class^1$.
By Corollary~\ref{cor:strongcconc}, we know that $\psi_K$ is strongly c-concave. We conclude by applying Theorem~\ref{th:stability-cconc}.
\qed


\end{document}